\def\5n{\negthinspace \negthinspace \negthinspace \negthinspace \negthinspace }
\def\4n{\negthinspace \negthinspace \negthinspace \negthinspace }
\def\3n{\negthinspace \negthinspace \negthinspace }
\def\2n{\negthinspace \negthinspace }
\def\1n{\negthinspace }
       \def\oaB{\overleftarrow{B}}
\def\dbE{\mathbb{E}}     
\def\dbF{\mathbb{F}} \def\sF{\mathscr{F}}    
\def\dbG{\mathbb{G}}         
\def\dbH{\mathbb{H}} \def\sH{\mathscr{H}}
\def\dbN{\mathbb{N}}     
\def\dbP{\mathbb{P}}     
\def\dbQ{\mathbb{Q}}     
\def\dbR{\mathbb{R}}
\def\Om{\Omega}
\def\ss{\smallskip}                
\def\ms{\medskip}                
\def\ds{\displaystyle}
\def\no{\noindent}        \def\q{\quad}                      
\def\ns{\noalign{\ss}}    \def\qq{\qquad}                    
    \def\hb{\hbox}                     
         \def\rf{\eqref}                    
  \def\deq{\triangleq}               
            \def\({\Big (}
\def\les{\leqslant}                  \def\){\Big )}
\def\ges{\geqslant}       \def\esssup{\mathop{\rm esssup}}   \def\[{\Big[}
           \def\]{\Big]}
                   \def\cd{\cdot}
\def\nn{\nonumber}        \def\ts{\times}
\def\a{\alpha}           \def\g{\gamma}   \def\O{\Omega}   \def\o{\omega}
\def\b{\beta}            \def\d{\delta}        
\def\e{\varepsilon}     \def\l{\lambda}        
         \def\f{\varphi}  \def\i{\infty}   
\def\bde{\begin{definition}\label}    \def\ede{\end{definition}}
\def\be{\begin{equation}}
\def\bel{\begin{equation}\label}      \def\ee{\end{equation}}
\def\bt{\begin{theorem}\label}        \def\et{\end{theorem}}
\def\bc{\begin{corollary}\label}      \def\ec{\end{corollary}}
\def\bl{\begin{lemma}\label}          \def\el{\end{lemma}}
\def\bp{\begin{proposition}\label}    \def\ep{\end{proposition}}
\def\bas{\begin{assumption}\label}    \def\eas{\end{assumption}}
\def\br{\begin{remark}\label}         \def\er{\end{remark}}
\def\bex{\begin{example}\label}       \def\ex{\end{example}}
\def\ba{\begin{array}}                \def\ea{\end{array}}
\def\ben{\begin{enumerate}}           \def\een{\end{enumerate}}
\newtheorem{theorem}{Theorem}[section]
\newtheorem{definition}[theorem]{Definition}
\newtheorem{proposition}[theorem]{Proposition}
\newtheorem{corollary}[theorem]{Corollary}
\newtheorem{lemma}[theorem]{Lemma}
\newtheorem{remark}[theorem]{Remark}
\newtheorem{example}[theorem]{Example}
\newtheorem{assumption}{Assumption}
\begin{document}

\title{\Large \bf Backward doubly stochastic differential equations and SPDEs with quadratic growth\thanks{Ying Hu was partially supported by Lebesgue Center of Mathematics ``Investissements d'avenir" program-ANR-11-LABX-0020-01, by CAESARS-ANR-15-CE05-0024 and by MFG-ANR16-CE40-0015-01. Jiaqiang Wen was supported by National Natural Science Foundation of China (Grant No. 12101291), Natural Science Foundation of Guangdong Province of China (Grant No. 2214050003543), and SUSTech start-up fund (Grant No. Y01286233). Jie Xiong was supported by National Natural Science Foundation of China (Grants No. 61873325 and 11831010), and SUSTech start-up fund (Grant No. Y01286120).}}
%

%

\author{Ying Hu\thanks{Univ. Rennes, CNRS, IRMAR - UMR 6625, F-35000 Rennes, France ((Email: {\tt ying.hu@univ-rennes1.fr}).}~,~~
Jiaqiang Wen\thanks{Department of Mathematics, Southern University of Science and Technology, Shenzhen, Guangdong, 518055, China ((Email: {\tt wenjq@sustech.edu.cn}).}~,~~
Jie Xiong\thanks{Department of Mathematics and SUSTech International center for Mathematics, Southern University of Science and Technology, Shenzhen, Guangdong, 518055, China (Email: {\tt xiongj@sustech.edu.cn}).}}
\maketitle

\no\bf Abstract. \rm
In this paper, we initiate the study of backward doubly stochastic differential equations (BDSDEs, for short) with quadratic growth. The existence, comparison, and stability results for one-dimensional BDSDEs are proved when the generator $f(t,Y,Z)$ grows in $Z$ quadratically and the terminal value is bounded, by introducing some new ideas.
Moreover, in this framework, we use BDSDEs to give a probabilistic representation for the solutions of semilinear stochastic partial differential equations (SPDEs, for short) in Sobolev spaces, and use it to prove the existence and uniqueness of such SPDEs, thus extending the nonlinear Feynman-Kac formula.
%
%
%
%

\ms

\no\bf Key words: \rm backward doubly stochastic differential equation, stochastic partial differential equation, quadratic growth, Feynman-Kac formula, Sobolev solution.

\ms

\no\bf AMS subject classifications. \rm 60H10, 60H15, 60H30.

\section{Introduction}

The nonlinear Feynman-Kac formula was introduced by Pardoux--Peng \cite{Pardoux-Peng-92} after they proved the existence and uniqueness of nonlinear backward stochastic differential equations (BSDEs, for short) in \cite{Pardoux-Peng-90}. It provides a probabilistic representation for a large class of semilinear partial differential equations (PDEs, for short).
A few years later, in order to give a probabilistic representation for semilinear stochastic PDEs (SPDEs, for short), a new class of BSDEs called the backward doubly stochastic differential equations (BDSDEs, for short) were introduced by them too in \cite{Pardoux-Peng-94}.
In order to present the work more clearly, we describe the problem in detail.

\ms

Let $(\Omega, \mathscr{F}, \mathbb{P})$ be a complete probability space on which two standard independent Brownian motions $\{W_t ; 0 \leqslant t<\i\}$ and $\{B_t ; 0 \leqslant t<\i\}$, with values in $\mathbb{R}^{d}$ and $\mathbb{R}^{l}$, respectively, are defined. Let $T>0$ be a fixed terminal time and denote by $\mathcal{N}$ the class of $\dbP$-null sets of $\sF$, where
\begin{equation*}
 \sF_{t} \deq   \sF_{t}^{W} \vee \sF_{t,T}^{B},\q \forall t\in[0,T].
\end{equation*}
In the above, for any process $\{\eta_t;0\les t\les T\},$ $\sF_{s,t}^{\eta}=\sigma\{\eta_r-\eta_s;s\les r\les t\}\vee \mathcal{N}$ and $\sF_{t}^{\eta}=\sF_{0,t}^{\eta}$.
For each $(t,x)\in[0,T]\ts\dbR^n$,  let $\{X^{t,x}_{s}; t\les s\les T\}$ be
the solution of the following stochastic differential equations (SDEs, for short):
\begin{equation*}
  X^{t,x}_{s} = x + \int_t^s b(X^{t,x}_{r}) dr + \int_t^s \sigma(X^{t,x}_{r}) dW_{r}, \q~ t\les s\les T,
\end{equation*}
and consider the following backward doubly stochastic differential equations: for $t\les s\les T$,
\begin{align}\label{BDSDE0}
 Y^{t,x}_s= h(X^{t,x}_T)+\int_s^Tf(r,X^{t,x}_r,Y^{t,x}_r,Z^{t,x}_r)dr
+\int_s^Tg(r,X^{t,x}_r,Y^{t,x}_r,Z^{t,x}_r)d\oaB_r - \int_s^TZ^{t,x}_{r}dW_{r},
\end{align}
where the $dW$ integral is a forward It\^{o}'s integral and the $d\overleftarrow{B}$ integral is a backward one. The coefficient $h$ is called the {\it terminal value} and the coefficient $f$ is called the {\it generator}. The solution of \rf{BDSDE0} is the pair $(Y,Z)$ of $\sF$-measurable processes.
%
%
For convenience, hereafter, by a {\it quadratic} BDSDE, or BDSDE {\it with quadratic growth}, we mean that in \rf{BDSDE0}, the generator $f$ grows in $Z$ quadratically. Meanwhile, we call $h$ the {\it bounded terminal value} if it is bounded.

\ms

Pardoux--Peng \cite{Pardoux-Peng-94} introduced BDSDEs \rf{BDSDE0}, gave the well-posedness of solutions under globally Lipschitz coefficients, and used it to prove the existence and uniqueness of the following semilinear SPDEs:
\begin{equation}\label{SPDE-2}
\begin{aligned}
\ds u(s, x)=& h(x)+\int_{s}^{T}\left\{\mathcal{L} u(r, x)+f\big(r, x, u(r, x),(\sigma^\top \nabla u)(r, x)\big)\right\} dr \\
\ns\ds &+\int_{s}^{T} g\big(r, x, u(r, x),(\sigma^\top \nabla u)(r, x)\big)d\oaB_r, \quad t \leqslant s \leqslant T,
\end{aligned}
\end{equation}
where $\sigma^\top$ denotes the transposed of $\sigma$, and
\begin{align*}
\mathcal{L}=\sum_{i=1}^{n} b_{i} \frac{\partial}{\partial x_{i}}+\frac{1}{2} \sum_{i, j=1}^{n} a_{i j} \frac{\partial^{2}}{\partial x_{i} \partial x_{j}}, \quad~\left(a_{i j}\right)=\sigma \sigma^\top.
\end{align*}
This result is summarized as the nonlinear stochastic Feynman-Kac formula,
which permits us to solve SPDE by BDSDE and, conversely, one can use SPDE to solve BDSDE too.
Since then, this important theory has attracted a lot of attention.
For example, Buckdahn--Ma \cite{Buckdahn01,Buckdahn012} introduced the stochastic viscosity solution to the nonlinear SPDEs, which connect BDSDEs to extend the nonlinear stochastic Feynman-Kac formula. Bally--Matoussi \cite{Bally-Matoussi-01} obtained the connection between the solution of BDSDEs and the Sobolev solution of related SPDEs. Zhang--Zhao \cite{Zhang-Zhao-07} studied infinite horizon BDSDEs and got the stationary solutions of related SPDEs.
Xiong \cite{Xio13} solved a long standing open problem about the SPDE characterization of the super-Brownian motion by making use of the BDSDE.
Buckdahn--Li--Xing \cite{Buckdahn-Li-Xing-21} studied mean-field BDSDEs and the associated nonlocal semilinear SPDEs.
Besides, Shi--Gu--Liu \cite{Shi-Gu-Liu-05} obtained the comparison theorem of solutions of BDSDEs. Han--Peng--Wu \cite{Han-Peng-Wu-10}  got the maximum principle for backward doubly stochastic control systems. Hu--Matoussi--Zhang \cite{Hu-Matoussi-Zhang-15} studied the Wong-Zakai approximations of BDSDEs.
For more recent developments of BDSDEs, we refer the readers to Boufoussi--Van Casteren--Mrhardy \cite{Boufoussi-Van Casteren-Mrhardy},
Gomez et al. \cite{Gomez-Xiong 2013}, Shi--Wen--Xiong \cite{Shi-Wen-Xiong-20}, Wu--Zhang \cite{Wu-Zhang-11}, Zhang--Zhao \cite{Zhang-Zhao-10}, etc.

\ms

Moreover, when the coefficient $g$ is absent, BDSDEs \rf{BDSDE0} are essentially reduced to the following backward stochastic differential equations:
\begin{align}\label{BSDE}
 Y_t=\xi+\int_t^Tf(s,Y_s,Z_s)ds - \int_t^TZ_{s}dW_{s},\q~0\les t\les T,
\end{align}
which were firstly introduced by Pardoux--Peng \cite{Pardoux-Peng-90}, where they obtained the existence and uniqueness under Lipschitz coefficients. Since then, BSDE stimulates some significant developments in many fields, such as partial differential equation (see Pardoux--Peng \cite{Pardoux-Peng-92} and Yong \cite{Yong-10}), mathematical finance (see El Karoui--Peng--Quenez \cite{Karoui-Peng-Quenez-97}), and stochastic optimal control (see Yong--Zhou \cite{Yong-Zhou-99}), to mention a few.
Meanwhile, based on the wide applications and motivated by the open problem proposed by Peng \cite{Peng-98}, many efforts have been made to relax the conditions on the generator $f$ of BSDE \rf{BSDE} for the existence and/or uniqueness of adapted solutions.
For example, Lepeltier--San Martin \cite{Lepeltier-Martin-97} proved the existence of adapted solutions for BSDE when the generator $f$ is continuous and of linear growth in $(Y,Z)$.
In 2000, Kobylanski \cite{Kobylanski-00} established the well-posedness for one-dimensional BSDE \rf{BSDE} with $f$ growing in $Z$ quadratically and with bounded terminal value.
When the terminal value is unbounded, Briand--Hu \cite{Briand-Hu-06,Briand-Hu-08} proved the existence and uniqueness of one-dimensional BSDEs with quadratic growth. Delbaen--Hu--Bao \cite{Delbaen-Hu-Bao11} studied BSDEs with the generator $f$ growing in $Z$ super-quadratically.
For multi-dimensional situation, Hu--Tang \cite{Hu-Tang-16} proved the existence and uniqueness of BDSDEs with diagonally quadratic generators, and Xing--Zitkovic \cite{Xing-Zitkovic} established the existence and uniqueness for a large class of Markovian BSDEs with quadratic growth.
Some other recent developments of quadratic BSDEs can be found in
Bahlali--Eddahbi--Ouknine \cite{Bahali-Eddahbi-Ouknine-17},
Barrieu--El Karoui \cite{Barrieu-Karoui-13}, Cheridito--Nam \cite{Cheridito-Nam-15},
Hu--Li--Wen \cite{Hu-Li-Wen-JDE2021}, Richou \cite{Richou-12}, Tevzadze \cite{Tevzadze-08},
 and references cited therein.

\ms

Note that in quadratic BSDEs, in order to overcome the difficulty of quadratic growth, one always would like to distinguish whether the terminal value is bounded or not. The reason is that when the terminal value is bounded, by involving bounded mean oscillation martingales (BMO martingale, for short), some nice estimates and regularities for the solution $(Y,Z)$ could be obtained. In other words, in this case, one could prove that $Y$ is bounded and $Z$ belongs to the BMO martingale space, which are useful when proving the existence and uniqueness.
In addition, when proving the existence, the main idea is to use the exponential transformation, which implies that there is an essential difference between the study of the one-dimensional case and the multi-dimensional case.
In summary, when studying the quadratic BSDEs, it is better to distinguish whether the terminal value is bounded or not, and the framework is a one-dimensional case or a multi-dimensional case.

\ms

On the other hand, during the past two decades, stimulated by the broad applications and the open problem of Peng \cite{Peng-98},
tremendous efforts have been made to relax the conditions on the generator $f$ of BDSDEs and extend the nonlinear stochastic Feynman-Kac formula.
%
%
%
However, to the best of our knowledge, there are few works concerning BDSDEs \rf{BDSDE0} when the generator $f$ grows in $Z$ quadratically. Some related studies are the works of Zhang--Zhao \cite{Zhang-Zhao-15} and Bahlali et al. \cite{Bahlali-17}, where they obtained the existence and uniqueness of BDSDEs when the generator $f$ is of polynomial growth in $Y$ and
is of super-linear (or sub-quadratic) in $Z$, respectively.
For quadratic BDSDEs, the main difficulty is that due to the collection $\dbF=\{ \sF_{t}; 0\les t\les T \}$ is neither increasing nor decreasing, and it does not constitute a filtration.
So the main technique of BMO martingale used in quadratic BSDEs is useless here, and one cannot expect to study the regularity of $Z$ in the BMO martingale space. Besides, the backward It\^o's integral appeared in BDSDE \rf{BDSDE0} will bring extra troubles when proving the existence and uniqueness of the solutions.

\ms

In this paper, we initiate the theory of BDSDEs with quadratic growth. In particular, we consider the one-dimensional framework of quadratic BDSDEs with bounded terminal value. First, we study the existence under a general assumption on the generator $f$. Borrowed some ideas from Kobylanski \cite{Kobylanski-00},
we construct an artful exponential transformation (see \autoref{Exp1}), which transforms the quadratic BDSDE into a classical one.
It should be pointed out that the solution  $(Y,Z)$ we considered is in the space
$L_{\dbF}^\i(0,T;\dbR)\times L_{\dbF}^2(0,T;\dbR^d)$ (see Section \ref{Sec2} for detailed definitions), since $\dbF=\{ \sF_{t}; 0\les t\les T \}$ is not a filtration.
Moreover, in order to overcome the troubles that come from the backward It\^o's integral, we introduce the idea of the comparison theorem of classical BDSDEs, and some restricted condition (see \rf{22.1.16.1}) on the coefficient $g$ is made, otherwise, $Y$ cannot be bounded (see \autoref{Eg1}).
Furthermore, based on a priori estimate (see \autoref{PriEstimate}) and the monotone stability  obtained (see \autoref{Mono-stable}), we prove that BDSDE with quadratic growth admits a solution $(Y,Z)\in L_{\dbF}^\i(0,T;\dbR)\times L_{\dbF}^2(0,T;\dbR^d)$ (see \autoref{existence}).
Second, in order to prove the uniqueness, we study a comparison theorem, due to the fact that the one-dimensional framework allows us to provide a comparison theorem which directly implies the uniqueness as a by-product (see \autoref{21.12.24.8}).
However, a stronger assumption on the coefficients $f$ and $g$ than that for the existence result is used to prove the comparison theorem. In other words, we assume that the partial derivatives of the coefficients $f$ and $g$ with respect to $Z$ are linear growth and bounded, respectively.
Moreover, based on the comparison theorem, we obtain a stability result, i.e.,
the solutions $(Y^{n}, Z^{n})$ of BDSDE with coefficients $(f^{n},g, \xi^{n})$ converge to the unique solution $(Y, Z)$ of BDSDE with coefficients $(f, g,\xi)$ when the sequences $\{f^{n};n\in\dbN\}$ and $\{\xi^{n};n\in\dbN\}$ converge to $f$ and $\xi$, respectively (see \autoref{Stability}).
Finally, we use BDSDE \rf{BDSDE0} with quadratic growth to give a probabilistic representation for the solutions of SPDE \rf{SPDE-2} in Sobolev spaces, and use it to prove the existence and uniqueness of SPDE \rf{SPDE-2}  when  $f$ grows in $\sigma^\top \nabla u$ quadratically  (see \autoref{22.3.9.1}), thus extending the nonlinear stochastic Feynman-Kac formula of Pardoux--Peng \cite{Pardoux-Peng-94} to quadratic growth situation.

\ms

The rest of the paper is organized as follows. In Section \ref{Sec2}, some preliminaries are presented. In Section \ref{Sec3}, a priori estimate, the monotonicity stability, and the existence of the solutions are proved. In Section \ref{Sec4}, we focus on the comparison theorem and thus derive the uniqueness of the solutions. The stability result is obtained in this section too. In Section \ref{Sec5}, we study the relationship between the solution of BDSDEs with quadratic growth and the Sobolev solution of related SPDEs. Section \ref{Sec6} concludes the results.

\section{Preliminaries}\label{Sec2}

Repeat that the triple $(\Omega, \mathscr{F}, \mathbb{P})$ is a complete probability space, and on which two standard independent Brownian motions $\{W_t ; 0 \leqslant t<\i\}$ and $\{B_t ; 0 \leqslant t<\i\}$ are defined with values in $\mathbb{R}^{d}$ and $\mathbb{R}^{l}$, respectively. Let $T>0$ be a fixed terminal time and denote by $\mathcal{N}$ the class of $\dbP$-null sets of $\sF$, where
\begin{equation*}
 \sF_{t} \deq   \sF_{t}^{W} \vee \sF_{t,T}^{B},\q~ \forall t\in[0,T].
\end{equation*}
In the above, for any process $\{\eta_t;0\les t\les T\},$ $\sF_{s,t}^{\eta}=\sigma\{\eta_r-\eta_s;s\les r\les t\}\vee \mathcal{N}$ and $\sF_{t}^{\eta}=\sF_{0,t}^{\eta}$.
Note that the collection $\dbF=\{ \sF_{t}; 0\les t\les T \}$ is not a filtration since it
is neither increasing nor decreasing.

\ms

Let us introduce some notations that will be used below. Denote by $|\cd|$ and $\langle\cd,\cd\rangle$ the Euclidean norm and dot product, respectively, throughout the paper.
For any $p\ges1$ and Euclidean space $\dbH$, denote by
$C^{p}(\dbH)$ the set of functions valued in $\dbH$ and of class $C^p$ with the partial
derivations of order less than or equal to $p$ are bounded.
In addition, we introduce the following spaces:
$$\ba{ll}
\ds L^p_{\sF_T}(\Om;\dbH)=\Big\{\xi:\Om\to\dbH\bigm|\xi\hb{ is $\sF_T$-measurable, }\|\xi\|_{L^p}\deq\big(\dbE|\xi|^p\big)^{1\over p}<\i\Big\},  \\
\ns\ds L_{\sF_T}^\i(\Om;\dbH)=\Big\{\xi:\Om\to\dbH\bigm|\xi\hb{ is $\sF_T$-measurable, }
\|\xi\|_{\i}\triangleq\esssup_{\o\in\Om}|\xi(\omega)|<\i\Big\},\ea$$
and for any $t\in[0,T)$ and $s\in[t,T]$, define
\begin{align*}
\ds L_\dbF^p(t,T;\dbH)\1n=&\ \1n\Big\{\f:[t,T]\1n\times\1n\Om\to\dbH\bigm|\f_s \hb{ is
$\sF_s$-measurable}, \
\|\f \|_{L_\dbF^p(t,T)}\deq\1n\(\dbE\int^T_t\1n|\f_s|^pds\)^{1\over p}\1n<\2n\i\Big\},\\
\ns\ds L_\dbF^\infty(t,T;\dbH)=&~
\Big\{\f:[t,T]\times\Om\to\dbH\bigm|\f_s \hb{ is $\sF_s$-measurable, }
\|\f \|_{L^\i_\dbF(t,T)}\deq \esssup_{(s,\o)\in[t,T]\times\Om}|\f_s(\o)|<\i\Big\},\\
\ns\ds S_\dbF^p(t,T;\dbH)=&~
\Big\{\f:[t,T]\times\Om\to\dbH\bigm|\f_s \hb{ is $\sF_s$-measurable, continuous, and }\\
\ns\ds &\qq\qq\qq\qq\q\ \|\f \|_{S_\dbF^p(t,T)}\deq\Big\{\dbE\[\sup_{s\in[t,T]}|\f_s|^p\]\Big\}^{\frac{1}{p}}<\i\Big\}.
%
\end{align*}
Consider the following backward doubly stochastic differential equations:
\begin{equation}\label{BDSDE}
Y_{t}=\xi+\int_{t}^{T} f\left(s,Y_s,Z_{s}\right) d s
+\int_{t}^{T} g\left(s,Y_s,Z_{s}\right) d \oaB_{s}
-\int_{t}^{T} Z_{s} d W_{s},\q~ 0\les t\les T,
\end{equation}
where $\xi:\Om\rightarrow\dbR^k$ is a random variable, $f:\Om\ts\dbR^{k}\ts\dbR^{k\ts d}\rightarrow\dbR^k$ and
$g:\Om\ts\dbR^{k}\ts\dbR^{k\ts d}\rightarrow\dbR^{k\ts l}$ are two coefficients.
Note that the $dW$ integral is a forward It\^{o}'s integral and the $d\overleftarrow{B}$ integral is a backward one, and both integrals are particular cases of It\^o-Skorohod integral (see Nualart \cite{Nualart-06}).

\begin{definition}\rm
A pair of measurable processes $(Y,Z)\in S^2_{\dbF}(0,T;\dbR^k)\times
L^2_\dbF(0,T;\dbR^{k\ts d})$ is called a solution of BDSDE \rf{BDSDE}, if $\dbP$-almost surely, it satisfies \rf{BDSDE}. In addition, if $Y$ is bounded, then we call the pair $(Y,Z)$ a bounded solution.
\end{definition}

Now, we recall the following propositions concerning BDSDEs, which come from Pardoux--Peng \cite{Pardoux-Peng-94} and Shi et al. \cite{Shi-Gu-Liu-05}. The first one is the It\^o's formula for backward doubly stochastic differential equations, the second one is the existence and uniqueness for the classical BDSDEs, and the third one is the comparison theorem for one-dimensional classical BDSDEs.

\begin{proposition}\label{Itoformula}\rm
Let $\Phi\in C^2(\dbR^k)$, $\alpha \in S^{2}_{\dbF}(0,T;\mathbb{R}^{k})$, $\beta\in L^{2}_{\dbF}(0,T;\mathbb{R}^{k})$, $\gamma\in L^{2}_{\dbF}(0,T;\mathbb{R}^{k\times l})$, $\delta\in L^{2}_{\dbF}(0,T;\mathbb{R}^{k\times d})$ be such that
\begin{align*}
  \a_t=\a_0 + \int_0^t \b_s ds + \int_0^t \g_s d\oaB_s + \int_0^t \d_s dW_s, \q~ 0\les t\les T.
\end{align*}
Then
\begin{align*}
  \Phi(\a_t)=&\ \Phi(\a_0)+\int_0^t \langle \Phi'(\a_s),\b_s\rangle ds
             +\int_0^t \langle \Phi'(\a_s),\g_s d\oaB_s\rangle
             + \int_0^t \langle \Phi'(\a_s),\d_sdW_s\rangle\\
           & -\frac{1}{2}\int_0^t \mathrm{Tr}[\Phi''(\a_s)\g_s\g_s^\top] ds
             +\frac{1}{2}\int_0^t \mathrm{Tr}[\Phi''(\a_s))\d_s\d_s^\top] ds.
\end{align*}
\end{proposition}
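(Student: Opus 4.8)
The plan is to prove the formula first for simple (piecewise constant) integrands and then remove the approximation. By a standard localization with stopping times $\tau_n=\inf\{t: |\a_t|+\int_0^t(|\b_s|+|\g_s|^2+|\d_s|^2)ds\ges n\}\wedge T$ I may assume that $\a$, $\b$, $\g$, $\d$ are bounded, so that along the (stopped) trajectory $\Phi'(\a_s)$ and $\Phi''(\a_s)$ remain bounded and every term on the right-hand side is well defined: the $ds$-integrals by boundedness, the forward $dW$-integral and the backward $d\oaB$-integral through their respective It\^o isometries. Approximating $\b,\g,\d$ in $L^2_\dbF$ by step processes, using these isometries together with dominated convergence, and noting that $\Phi,\Phi',\Phi''$ are Lipschitz on the relevant bounded set, it suffices to establish the identity when $\b,\g,\d$ are piecewise constant.

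Fix $t\in[0,T]$ and a partition $0=t_0<t_1<\cdots<t_N=t$ refining the jump times, with mesh tending to zero, and write $\Delta\a_i=\a_{t_{i+1}}-\a_{t_i}$. Telescoping and a second-order Taylor expansion give $\Phi(\a_t)-\Phi(\a_0)=\sum_i\langle\Phi'(\a_{t_i}),\Delta\a_i\rangle+\frac12\sum_i\langle\Phi''(\eta_i)\Delta\a_i,\Delta\a_i\rangle$ for intermediate points $\eta_i$, where $\Delta\a_i$ is the sum of the drift increment (of order $\Delta t_i$), the forward increment $\int_{t_i}^{t_{i+1}}\d_s\,dW_s$, and the backward increment $\int_{t_i}^{t_{i+1}}\g_s\,d\oaB_s$. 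The first-order drift sum converges to $\int_0^t\langle\Phi'(\a_s),\b_s\rangle\,ds$. Because $\Phi'(\a_{t_i})$ is evaluated at the left endpoint, the forward part of the first-order sum is already in the form of a forward It\^o Riemann sum and converges to $\int_0^t\langle\Phi'(\a_s),\d_s\,dW_s\rangle$ with no correction.

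The decisive point is the backward part of the first-order sum. There the factor $\Phi'(\a_{t_i})$ sits at the left endpoint, whereas the backward It\^o integral is built from right-endpoint evaluations. Writing $\Phi'(\a_{t_i})=\Phi'(\a_{t_{i+1}})-\bigl(\Phi'(\a_{t_{i+1}})-\Phi'(\a_{t_i})\bigr)$ splits this sum into a right-endpoint sum, converging to the backward integral $\int_0^t\langle\Phi'(\a_s),\g_s\,d\oaB_s\rangle$, and a cross term. Using $\Phi'(\a_{t_{i+1}})-\Phi'(\a_{t_i})\approx\Phi''(\a_{t_i})\Delta\a_i$ and retaining only the backward part of $\Delta\a_i$ (the drift is of lower order, and the forward $W$-increment is independent of $B$, so both corresponding cross contributions vanish), the cross term converges to $\int_0^t\mathrm{Tr}[\Phi''(\a_s)\g_s\g_s^\top]\,ds$ and enters with a minus sign. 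In the second-order Taylor sum the mixed products between $W$-, $B$- and drift-increments vanish in the limit, leaving $\frac12\int_0^t\mathrm{Tr}[\Phi''(\a_s)\d_s\d_s^\top]\,ds+\frac12\int_0^t\mathrm{Tr}[\Phi''(\a_s)\g_s\g_s^\top]\,ds$. Adding the backward first-order correction $-\int_0^t\mathrm{Tr}[\Phi''\g\g^\top]\,ds$ to the second-order contribution $+\frac12\int_0^t\mathrm{Tr}[\Phi''\g\g^\top]\,ds$ yields the net coefficient $-\frac12$ on $\g\g^\top$, while $\d\d^\top$ keeps $+\frac12$ --- precisely the asserted identity.

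I expect the main obstacle to be the rigorous handling of the backward integral: isolating the cross term produced by the left-endpoint evaluation and proving, via $L^2$ estimates, that it converges to exactly $\int_0^t\mathrm{Tr}[\Phi''(\a_s)\g_s\g_s^\top]\,ds$ with the correct sign. This asymmetry between left- and right-endpoint evaluation is what separates the backward from the forward It\^o calculus and is the true source of the opposite signs on the two trace terms. The remaining technical burden is to control the Taylor remainder uniformly and to justify the interchange of limits and approximations for both integral types simultaneously; once the piecewise-constant identity is established, the passage to general $\b,\g,\d$ follows from the two It\^o isometries and dominated convergence, and letting $n\to\infty$ removes the localization.
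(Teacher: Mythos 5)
First, a point of reference: the paper does not prove this proposition at all --- it is recalled from Pardoux--Peng \cite{Pardoux-Peng-94} (their Lemma~1.3), so your argument cannot be checked against an in-paper proof and must stand on its own. Its core does stand. The telescoping Taylor expansion, the observation that the forward sum is already a left-endpoint It\^o sum, the rewriting $\Phi'(\a_{t_i})=\Phi'(\a_{t_{i+1}})-\bigl(\Phi'(\a_{t_{i+1}})-\Phi'(\a_{t_i})\bigr)$ to isolate the cross term in the backward sum, and the bookkeeping $-\mathrm{Tr}[\Phi''\g\g^\top]+\frac12\mathrm{Tr}[\Phi''\g\g^\top]=-\frac12\mathrm{Tr}[\Phi''\g\g^\top]$ are exactly the right mechanism for the opposite signs on the two trace terms; this is essentially the classical derivation.

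The genuine gap is the opening localization. The collection $\dbF=\{\sF_t;0\les t\les T\}$ with $\sF_t=\sF_t^W\vee\sF_{t,T}^B$ is not a filtration, and stopping-time arguments are not available in this framework --- this is precisely the structural obstruction the paper emphasizes throughout (it is why BMO techniques fail for quadratic BDSDEs). Concretely, the event $\{\tau_n\ges s\}$ is determined by $(\a_r)_{r<s}$, and for $r<s$ the variable $\a_r$ is $\sF_r^W\vee\sF_{r,T}^B$-measurable, hence involves the increments $B_v-B_r$ for $r\les v\les s$, which are independent of (and not contained in) $\sF_s=\sF_s^W\vee\sF_{s,T}^B$. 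Therefore $\{\tau_n\ges s\}\notin\sF_s$, the stopped integrand $\mathbf{1}_{[0,\tau_n]}(s)\,\g_s$ is no longer $\sF_s$-measurable, and the stopped backward integral $\int\mathbf{1}_{[0,\tau_n]}\g\,d\oaB$ is not defined within the class of integrands for which the backward It\^o (It\^o--Skorohod) integral and its isometry exist; symmetrically, a time adapted to the backward flow of $B$ would anticipate $W$. No single random time can serve both integrals, so the reduction "assume $\a,\b,\g,\d$ bounded" is unjustified as written. The repair is to localize deterministically rather than randomly: prove the formula first for $\Phi$ with bounded first and second derivatives (then every term is integrable directly from $\a\in S^2_{\dbF}$ and $\b,\g,\d\in L^2_{\dbF}$, and your step-process approximation and Riemann-sum computation go through without any boundedness of the integrands); then, for general $\Phi\in C^2(\dbR^k)$, replace $\Phi$ by a modification $\Phi_n$ that agrees with it on $\{|x|\les n\}$ and has bounded derivatives, and use the local property of the stochastic integrals on the events $\{\sup_{0\les s\les T}|\a_s|\les n\}$, whose probabilities tend to $1$, to remove the truncation.
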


\begin{assumption}\label{Assump1}\rm
The terminal value $\xi$ comes from $L^2_{\sF_T}(\Omega;\dbR^k)$, and for any $y\in\dbR^k$ and  $z\in\dbR^{k\ts d}$,
$$f(\cd,y,z)\in L_\dbF^2(0,T;\dbR^k)\q~\hb{and}\q~ g(\cd,y,z)\in L_\dbF^2(0,T;\dbR^{k\ts l}).$$
Moreover, there exist two positive constants $C$ and $\a$ with $\a\in(0,1)$ such that for any $t\in[0,T]$, $y_1,y_2\in\dbR^k$, $z_1,z_2\in\dbR^{k\ts d}$,
\begin{align}
\ds |f(t,y_1,z_1)-f(t,y_2,z_2)|^2\les&\ C\big[|y_1-y_2|^2+|z_1-z_2|^2\big], \nn\\
\ns\ds  |g(t,y_1,z_1)-g(t,y_2,z_2)|^2\les&\ C|y_1-y_2|^2+\a|z_1-z_2|^2. \label{22.3.2.2}
\end{align}
\end{assumption}

\begin{proposition}\label{PP-EQ}\rm
Under \autoref{Assump1}, BDSDEs \rf{BDSDE} admit a unique solution  $(Y,Z)\in S^2_{\dbF}(0,T;\dbR^k)\times L^2_{\dbF}(0,T;\dbR^{k\ts d})$.
\end{proposition}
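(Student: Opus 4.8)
The plan is to obtain $(Y,Z)$ as the unique fixed point of a Picard-type map and to invoke the Banach fixed-point theorem, the backward It\^o integral being controlled throughout by \autoref{Itoformula}. As a preliminary reduction I would first solve the ``frozen'' equation, in which the coefficients carry no dependence on the unknowns: given $\phi\in L^2_\dbF(0,T;\dbR^k)$ and $\psi\in L^2_\dbF(0,T;\dbR^{k\ts l})$, find $(Y,Z)$ with
$$Y_t=\xi+\int_t^T\phi_s\,ds+\int_t^T\psi_s\,d\oaB_s-\int_t^TZ_s\,dW_s,\qquad 0\les t\les T.$$
Here lies the first genuine difficulty: because $\dbF$ is not a filtration, the martingale representation theorem is not available directly. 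The remedy, following Pardoux--Peng, is to work under the forward filtration $\{\sF^W_t\vee\sF^B_{0,T}\}_{0\les t\les T}$, which is increasing; conditioning on the whole trajectory of $B$ (under which $W$ remains a Brownian motion), one applies the representation theorem for $W$ to the relevant $W$-conditional martingale to construct $Z$, and then verifies a posteriori that the resulting $Y_t$ is $\sF_t$-measurable. This produces a solution $(Y,Z)\in S^2_\dbF\times L^2_\dbF$, unique by the energy identity below with $\phi,\psi$ held fixed (where the differences $\hat f,\hat g$ vanish).

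With the frozen equation solved, I would define $\Gamma:(U,V)\mapsto(Y,Z)$ by solving it with $\phi_s=f(s,U_s,V_s)$ and $\psi_s=g(s,U_s,V_s)$; \autoref{Assump1} guarantees $\phi,\psi$ lie in the requisite $L^2$ spaces, so $\Gamma$ maps $L^2_\dbF(0,T;\dbR^k)\times L^2_\dbF(0,T;\dbR^{k\ts d})$ into itself. To estimate $\Gamma$ I would apply \autoref{Itoformula} to $e^{\beta s}|\hat Y_s|^2$, where $(\hat Y,\hat Z)$ is the difference of two images and $(\hat U,\hat V)$ the corresponding difference of inputs. Since the two images share the terminal value $\xi$, and since the backward-integral correction in \autoref{Itoformula} enters with a minus sign, taking expectations gives, for every $t$,
$$\dbE\big[e^{\beta t}|\hat Y_t|^2\big]+\dbE\int_t^T e^{\beta s}\big(\beta|\hat Y_s|^2+|\hat Z_s|^2\big)\,ds=\dbE\int_t^T e^{\beta s}\big(2\langle\hat Y_s,\hat f_s\rangle+|\hat g_s|^2\big)\,ds,$$
with $\hat f_s=f(s,U^1_s,V^1_s)-f(s,U^2_s,V^2_s)$ and $\hat g_s$ defined analogously.

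The contraction is where the hypotheses of \autoref{Assump1} do their work, and this is the step I expect to be most delicate. Splitting $2\langle\hat Y,\hat f\rangle\les\tfrac{\beta}{2}|\hat Y|^2+\tfrac2\beta|\hat f|^2$ and inserting the Lipschitz bounds $|\hat f|^2\les C(|\hat U|^2+|\hat V|^2)$ and $|\hat g|^2\les C|\hat U|^2+\a|\hat V|^2$, the $\tfrac{\beta}{2}|\hat Y|^2$ term is absorbed on the left, and one is led to control $\dbE\int e^{\beta s}|\hat Z|^2$ by $\dbE\int e^{\beta s}\big[(\tfrac{2C}{\beta}+C)|\hat U|^2+(\tfrac{2C}{\beta}+\a)|\hat V|^2\big]$, and $\dbE\int e^{\beta s}|\hat Y|^2$ by the same right-hand side multiplied by $\tfrac2\beta$. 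The coefficient in front of $|\hat V|^2$ is $\tfrac{2C}{\beta}+\a$, which is strictly below $1$ for $\beta$ large precisely because $\a\in(0,1)$; this is exactly why the structural restriction $\a<1$ on the $Z$-Lipschitz constant of $g$ is indispensable (in line with \autoref{Eg1}, where it cannot hold otherwise).

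The coefficient $\tfrac{2C}{\beta}+C$ in front of $|\hat U|^2$, however, does not shrink below $1$, since the $Y$-Lipschitz constant of $g$ may be large; so a naive bound in the product norm fails. Instead I would read the two inequalities as a coupled recursion on $\big(\dbE\int e^{\beta s}|\hat Y^n|^2,\,\dbE\int e^{\beta s}|\hat Z^n|^2\big)$ along the Picard iterates, whose associated $2\ts2$ matrix has vanishing determinant and spectral radius $\tfrac{2}{\beta}(\tfrac{2C}{\beta}+C)+\tfrac{2C}{\beta}+\a$, which tends to $\a<1$ as $\beta\to\i$ (the large constant $C$ always appearing damped by a $\beta^{-1}$ factor). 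Equivalently, on the weighted norm $p\,\dbE\int_0^T e^{\beta s}|Y_s|^2\,ds+\dbE\int_0^T e^{\beta s}|Z_s|^2\,ds$, with $p$ a fixed constant of order $C/\a$ and $\beta$ large, $\Gamma$ is a genuine contraction. The Banach fixed-point theorem then yields a unique $(Y,Z)$ solving \rf{BDSDE} in $L^2_\dbF\times L^2_\dbF$; finally, returning to the equation and applying the Burkholder--Davis--Gundy inequality to the forward martingale $\int_0^\cd Z\,dW$ upgrades $Y$ to $S^2_\dbF(0,T;\dbR^k)$, completing both existence and uniqueness.
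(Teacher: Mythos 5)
The paper never proves \autoref{PP-EQ}: it is recalled as a known result, cited from Pardoux--Peng \cite{Pardoux-Peng-94}, so the only proof to compare against is the original one --- and your proposal reconstructs that argument faithfully and correctly: the frozen equation is solved via martingale representation under the increasing filtration $\sF_t^W\vee\sF_{0,T}^B$ (with the a posteriori check that $Y_t$ is $\sF_t$-measurable), and the Picard map is a contraction in exponentially weighted norms, where the hypothesis $\a<1$ from \rf{22.3.2.2} is exactly what keeps the coefficient of $|\hat V|^2$ below one while the possibly large $y$-Lipschitz constant of $g$ is damped by $\b^{-1}$; your vanishing-determinant/spectral-radius bookkeeping is right. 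The one step to tighten is the final upgrade of $Y$ to $S^2_{\dbF}(0,T;\dbR^k)$: besides the Burkholder--Davis--Gundy inequality for the forward martingale $\int_0^{\cdot}Z_s\,dW_s$, you must also invoke the analogous maximal inequality for the backward It\^o integral $\int_t^T g(s,Y_s,Z_s)\,d\oaB_s$ (a martingale with respect to a decreasing family of $\sigma$-algebras), which holds for the same reason and closes the argument.
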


\begin{proposition}\label{Shi05}\rm
Suppose that the coefficients $(f^1,g,\xi^1)$ and $(f^2,g,\xi^2)$ satisfy \autoref{Assump1},
and let $(Y^1,Z^1)$ and $(Y^2,Z^2)$ be the solutions of one-dimensional BDSDEs \rf{BDSDE}  with parameters $(f^1,g,\xi^1)$ and $(f^2,g,\xi^2)$, respectively.  If for any $(t, y, z) \in[0, T] \times \dbR \times \dbR^{d}$,
$$\xi^{1} \les \xi^{2}, \q~ f^{1}(t, y, z) \les f^{2}(t, y, z), \q~ \text {a.s.,}$$
then we have that for any $t\in[0,T]$,
$$Y^{1}_t \les Y^{2}_t, \q~\text{a.s.}$$
\end{proposition}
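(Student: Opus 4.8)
The plan is to reduce the claim to showing that the difference $\h Y := Y^1-Y^2$ never becomes positive, and to control its positive part through the doubly stochastic It\^o formula of \autoref{Itoformula}. Setting $\h Z := Z^1-Z^2$ and $\h\xi := \xi^1-\xi^2\les 0$, the pair $(\h Y,\h Z)$ solves
\begin{align*}
\h Y_t = \h\xi &+ \int_t^T \big[f^1(s,Y^1_s,Z^1_s)-f^2(s,Y^2_s,Z^2_s)\big]ds \\
&+ \int_t^T \big[g(s,Y^1_s,Z^1_s)-g(s,Y^2_s,Z^2_s)\big]d\oaB_s - \int_t^T \h Z_s dW_s.
\end{align*}
First I would split the $f$-difference as $\big[f^1(s,Y^1_s,Z^1_s)-f^1(s,Y^2_s,Z^2_s)\big]+\big[f^1(s,Y^2_s,Z^2_s)-f^2(s,Y^2_s,Z^2_s)\big]$: the second bracket is $\les 0$ by the hypothesis $f^1\les f^2$, while the first is controlled by the Lipschitz bound of \autoref{Assump1}. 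For $g$ I would keep the difference intact and invoke the structural estimate $|g(s,Y^1_s,Z^1_s)-g(s,Y^2_s,Z^2_s)|^2\les C|\h Y_s|^2+\a|\h Z_s|^2$ from \rf{22.3.2.2}, whose contraction constant $\a<1$ will be decisive.

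Next I would apply the It\^o formula to $\Phi(\h Y_t)$ with $\Phi(x)=(x^+)^2$. Since $\Phi$ is only $C^1$, I would mollify it by smooth convex $\Phi_n$ with $0\les\Phi_n''\les 2$, $\Phi_n\to\Phi$ and $\Phi_n'\to 2x^+$, apply \autoref{Itoformula} to each $\Phi_n$, and pass to the limit. The essential feature is the sign pattern in \autoref{Itoformula}: the backward $d\oaB$ integral contributes $-\tfrac12\int\Phi''\gamma\gamma^\top ds$ whereas the forward $dW$ integral contributes $+\tfrac12\int\Phi''\delta\delta^\top ds$, here with $\gamma_s=-\big(g(s,Y^1_s,Z^1_s)-g(s,Y^2_s,Z^2_s)\big)$ and $\delta_s=\h Z_s$. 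Using $\Phi(\h Y_T)=(\h\xi^+)^2=0$ and taking expectations (the $dW$ martingale and the backward $d\oaB$ integral both having zero mean), this yields
\begin{align*}
\dbE(\h Y_t^+)^2 =\ & \dbE\int_t^T 2\h Y_s^+\big[f^1(s,Y^1_s,Z^1_s)-f^2(s,Y^2_s,Z^2_s)\big]ds \\
&+ \dbE\int_t^T \mathbf{1}_{\{\h Y_s>0\}}\big|g(s,Y^1_s,Z^1_s)-g(s,Y^2_s,Z^2_s)\big|^2 ds - \dbE\int_t^T \mathbf{1}_{\{\h Y_s>0\}}|\h Z_s|^2 ds.
\end{align*}

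I would then estimate the right-hand side on $\{\h Y_s>0\}$, where $\h Y_s^+=\h Y_s=|\h Y_s|$. The $f$-term is bounded, via the Lipschitz bound and Young's inequality, by $\big(2\sqrt C+C/\e\big)(\h Y_s^+)^2+\e\,\mathbf{1}_{\{\h Y_s>0\}}|\h Z_s|^2$ for any $\e>0$, and the $g$-term by $C(\h Y_s^+)^2+\a\,\mathbf{1}_{\{\h Y_s>0\}}|\h Z_s|^2$. Collecting the $|\h Z_s|^2$ contributions gives the coefficient $(\e+\a-1)$, and here is the crux: because $\a<1$ I can fix $\e=1-\a$, so this coefficient vanishes and the negative term $-\mathbf{1}_{\{\h Y_s>0\}}|\h Z_s|^2$ absorbs everything. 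What remains is $\dbE(\h Y_t^+)^2\les K\,\dbE\int_t^T(\h Y_s^+)^2 ds$ for a constant $K=K(C,\a)$, whence Gronwall's inequality forces $\dbE(\h Y_t^+)^2=0$ for every $t$, i.e. $Y^1_t\les Y^2_t$ a.s.

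I expect the main obstacles to be two technical points rather than the algebra. The first is justifying the It\^o expansion for the non-$C^2$ map $(x^+)^2$ within the doubly stochastic framework, which the mollification-and-limit scheme above handles provided $\Phi_n''$ stays uniformly bounded. The second, genuinely specific to this setting, is verifying that the backward integral $\int_t^T(\cdots)d\oaB_s$ has zero expectation: unlike in the standard BSDE case one must exploit the precise measurability of the backward It\^o integrand (it depends on $B$ only through increments in $[s,T]$, independent of $\sF_s^W$) to conclude. Everything else is a routine linearization-plus-Gronwall argument, whose single indispensable ingredient is the condition $\a<1$ in \rf{22.3.2.2}, which is exactly what prevents the $|\h Z|^2$ terms generated by the backward integral from overwhelming the estimate.
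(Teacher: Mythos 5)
The paper itself contains no proof of this proposition: it is recalled (with \autoref{Assump1}) as a known result from Shi--Gu--Liu \cite{Shi-Gu-Liu-05}, and your argument is correct and is essentially the standard proof given in that reference --- the doubly stochastic It\^o formula of \autoref{Itoformula} applied to $\big((Y^1-Y^2)^+\big)^2$ (with the opposite-sign quadratic-variation corrections for the $d\oaB$ and $dW$ integrals), the splitting of the generator difference to exploit $f^1\les f^2$ together with the Lipschitz bound, Young's inequality with weight $\e=1-\a$ so that the contraction constant $\a<1$ in \rf{22.3.2.2} makes the $|Z^1-Z^2|^2$ contributions cancel, and Gronwall's inequality.

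The two technicalities you flag are the genuinely delicate points and both resolve as you anticipate: the ambiguity of the mollified second derivative on the set $\{Y^1_s=Y^2_s\}$ is harmless because on that set $|g(s,Y^1_s,Z^1_s)-g(s,Y^2_s,Z^2_s)|^2\les\a|Z^1_s-Z^2_s|^2<|Z^1_s-Z^2_s|^2$, so the residual term has a favorable sign (alternatively, a one-sided mollification removes it altogether); and the zero-mean property of both the forward and the backward stochastic integrals follows from the $S^2_\dbF\times L^2_\dbF$ integrability of the solutions via a Burkholder--Davis--Gundy bound or a routine localization, the backward integral being an It\^o integral with respect to the time-reversed Brownian motion.
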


\section{Quadratic BDSDE}\label{Sec3}
In this section, we study the existence of one-dimensional BDSDEs with quadratic growth, i.e., $k=1$.
Before proving the existence, we study a priori estimate and the monotone stability, which are important for us to prove existence later.
In order to justify the assumptions that were used to prove the existence, we give two examples. It should be pointed out that the first example also implies the main idea to prove existence, which is the central technique throughout this section.
%

\begin{example}\label{Exp1}\rm
Consider the following backward doubly stochastic differential equation,
\begin{equation}\label{3.1.0}
  Y_{t} = \xi + C\int_t^T |Z_{s}|^{2} ds +\a\int_t^T Z_{s} d\overleftarrow{B}_s - \int_t^T Z_{s} dW_s,\q~ 0\les t\les T,
\end{equation}
where $C>0$ and $\a\in(-1,1)$ are two constants. Let $\b$ be a positive constant which will be given later. Applying the exponential change of variable to $y=\exp(\b Y)$ implies that
\begin{align*}
\ds  y_T=&~y_t-C\int_t^T\b y_s|Z_{s}|^{2}ds
-\a\int_t^T\b y_sZ_{s} d\overleftarrow{B}_s+\int_t^T\b y_sZ_sdW_s\\
\ns\ds &-\frac{\a^2}{2}\int_t^T\b^2y_s|Z_{s}|^{2}ds
     +\frac{1}{2}\int_t^T\b^2y_s|Z_{s}|^{2}ds\\
\ns\ds =&~y_t-\Big[C-\frac{1-\a^2}{2}\b\Big]\int_t^T\b y_s|Z_{s}|^{2}ds-\a\int_t^T\b y_sZ_{s} d\overleftarrow{B}_s+\int_t^T\b y_sZ_sdW_s.
\end{align*}
Let $\b=\frac{2C}{1-\a^2}$ and denote by $z=\b yZ$, then the above equation can be rewritten as
\begin{align}\label{3.3}
  y_t=\exp(\b \xi)+\int_t^T\a z_s d\overleftarrow{B}_s-\int_t^Tz_sdW_s,\q~ 0\les t\les T,
\end{align}
which essential is a linear backward doubly stochastic differential equation. Then the classical existence and uniqueness (see  \autoref{PP-EQ}) implies that when
\bel{3.4.6}\exp(\b\xi)\in L^2_{\sF_T}(\O;\dbR),\ee
there exists a unique solution $(y,z)\in S^2_{\dbF}(0,T;\dbR)\times L^2_{\dbF}(0,T;\dbR^d)$ for BDSDE \rf{3.3}.
Note that, $\b$ is positive constant, so taking $\xi\in L^\i_{\sF_T}(\Omega;\dbR)$ is a sufficient condition to insure that $\xi$ satisfies \rf{3.4.6}. In fact, if $\xi\in L_{\sF_T}^\i(\Omega;\dbR)$, then $\exp(\xi)\in L_{\sF_T}^\i(\Omega;\dbR)$ and \rf{3.4.6} holds,
thus Eq. \rf{3.3} admits a unique solution $(y,z)\in S^2_{\dbF}(0,T;\dbR)\times L^2_{\dbF}(0,T;\dbR^d)$. Moreover, for BDSDE \rf{3.3}, by the comparison theorem (see \autoref{Shi05}), one has
$$  \exp(-\b\|\xi\|_\i)\les y_t\les \exp(\b\|\xi\|_\i),\q~ \forall t\in[0,T],$$
which implies that $y_t$ is positive and bounded. Now, we can define $(Y,Z)$ by
$$Y_t=\frac{\ln(y_t)}{\b},\q~ Z_t=\frac{z_t}{\b y_t},\q~ \forall t\in [0,T].$$
Then, it is easy to check that the pair $(Y,Z)\in L_{\dbF}^\i(0,T;\dbR)\times L_{\dbF}^2(0,T;\dbR^d)$  defined above is a solution of BDSDE \rf{3.1.0}.
Finally, the uniqueness of \rf{3.1.0} in $L_{\dbF}^\i(0,T;\dbR)\times L_{\dbF}^2(0,T;\dbR^d)$ follows from the uniqueness of \rf{3.3} and the fact that the exponential change of variable is no longer formal.
\end{example}
\begin{example}\label{Eg1}\rm
Consider the following linear backward doubly stochastic differential equation,
\begin{equation}\label{22.3.2.1}
  Y_{t} = \xi +\int_t^T(H+ d_sY_s) d\overleftarrow{B}_s - \int_t^T Z_{s} dW_s,\q~ 0\les t\les T,
\end{equation}
where $H$ is a constant and $d$ is a bounded deterministic function.
From \autoref{PP-EQ}, BDSDE \rf{22.3.2.1} admits a unique solution $(Y,Z)$.
Moreover, Proposition 2.1 of Bally--Matoussi \cite{Bally-Matoussi-01} implies that the solution $Y$ is given explicitly by
\begin{equation*}
  Y_{t} =\dbE\left[\Psi(t,T)\xi+H\int_t^T d_s\Psi(t,s)ds\bigg|\sF_t\right],\q\ 0\les t\les T,
\end{equation*}
where
$$\Psi(t,s)=\exp\left(\int_t^sd_rd\overleftarrow{B}_r+\frac{1}{2}\int_t^s|d_r|^2dr \right),\q\  t\les s\les T,$$
which is not bounded even if the terminal value $\xi$ is bounded.
\end{example}

\begin{remark}\label{Remark1} \rm
On the one hand, \autoref{Exp1} tells us that why we require the boundedness of the terminal value $\xi$. On the other hand, \autoref{Eg1} implies that even in the classical situation with bounded terminal value $\xi$, the normal Lipschitz condition of the coefficient $g$ with respect to $y$ (see \rf{22.3.2.2}) could not guarantee that $Y$ is bounded.
\end{remark}

\subsection{A priori estimate}

In this subsection, we prove a priori estimate for the solution $(Y,Z)$ of quadratic BDSDEs. First, we introduce the assumption.

\begin{assumption}\label{AssumpPri}\rm
Suppose that the terminal value $\xi\in L^\i_{\sF_T}(\Omega;\dbR)$.
Let $a:[0,T]\rightarrow\dbR$ and $b:[0,T]\rightarrow\dbR^+$ be two functions, and $C$ and $\a$ be two positive constants with $\a\in(0,1)$. For all $(t,y,z)\in[0,T]\times\dbR\times\dbR^d$,
\begin{align*}
  f(t,y,z)=a_0(t,y,z)y+f_0(t,y,z)
\end{align*}
with
\begin{align*}
a_0(t,y,z)\les a_t,\q~ |f_0(t,y,z)|\les b_t+C|z|^2,\q~\hb{a.s.}
\end{align*}
and
\begin{align}\label{22.1.16.1}
|g(t,y,z)|^2\les \a|z|^2,\q~\hb{a.s.}
\end{align}
\end{assumption}

\begin{remark} \rm
We point out that the boundedness of the solution $Y$ requires us to make the condition \rf{22.1.16.1} concerning the coefficient $g$; if not, the solution $Y$ may not be bounded even if the terminal value $\xi$ is bounded (see \autoref{Eg1} and \autoref{Remark1}).
Here we give an example that satisfying \rf{22.1.16.1}:
\begin{equation*}
  g(t,y,z)=\sin(y)l(z)\q~ \hb{with}\q~ |l(z)|^2\les \a|z|^2,\q~ \forall (t,y,z)\in[0,T]\ts\dbR\ts\dbR^d.
\end{equation*}
\end{remark}

\begin{proposition}[A priori estimate]\label{PriEstimate}
Let $(Y,Z)\in L_{\dbF}^\i(0,T;\dbR)\times L_{\dbF}^2(0,T;\dbR^d)$ be a solution of BDSDE \rf{BDSDE} with parameters $(f,g,\xi)$, which satisfy \autoref{AssumpPri} with $a, b, C, \a$, such that
$$a^{+}=\max(a,0),\q~ b\in L^1(0,T;\dbR).$$
Then for every $t\in[0,T]$,
\begin{align}
\ds  Y_t\les&\ \Big[\sup_{\Omega}(Y_T)\Big]^{+}\cdot\exp\Big(\int_t^Ta_sds\Big)
  +\int_t^Tb_s\exp\Big(\int_t^sa_rdr\Big)ds,\q~ \hb{a.s.} \label{3.4.5}\\
\ns\ds \bigg(\hb{resp. }  Y_t\ges&\ \Big[\inf_{\Omega}(Y_T)\Big]^{-}\cdot\exp\Big(\int_t^Ta_sds\Big)
  -\int_t^Tb_s\exp\Big(\int_t^sa_rdr\Big)ds,\q~ \hb{a.s.}\bigg). \label{3.4.5.2}
\end{align}
Moreover, there exists a positive constant $K$ depending only on $\|Y\|_{\i}, \ \|a^{+}\|_{L^1}$, $C$ and $\a$ such that
\begin{equation}\label{22.2.15.1}
\dbE\int_0^T|Z_s|^2ds\les K.
\end{equation}
\end{proposition}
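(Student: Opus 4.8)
The plan is to obtain both one-sided bounds by comparing $Y$ with the solution $v$ of the associated linear ODE, after an exponential change of variable designed to cancel the quadratic term, and then to extract the $Z$-estimate from an exponentially weighted energy identity. Throughout, the It\^o formula \autoref{Itoformula} is the basic tool.

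First I would set up the reduction. Let $v$ be the solution of the backward linear ODE $-\dot v_s=a_sv_s+b_s$ on $[0,T]$ with $v_T=[\sup_\Omega Y_T]^+$; its explicit solution is exactly the right-hand side of \rf{3.4.5}, and since $v_T\ges0$, $b\ges0$ and the exponential factors are positive, $v_s\ges0$ for every $s$. It then suffices to prove the upper bound $Y_t\les v_t$ a.s.: the lower bound \rf{3.4.5.2} follows by applying the upper bound to $\wt Y=-Y$, which solves a BDSDE with coefficients $\wt f(s,y,z)=-f(s,-y,-z)$, $\wt g(s,y,z)=-g(s,-y,-z)$ and terminal value $-\xi$. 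These satisfy the same \autoref{AssumpPri} with identical $a,b,C,\a$, because the decomposition $\wt f=a_0(s,-y,-z)\,y-f_0(s,-y,-z)$ preserves the one-sided bound $a_0\les a$ together with $|{-f_0}|=|f_0|$, while $|\wt g|^2=|g|^2\les\a|z|^2$; moreover $[\sup_\Omega(-Y_T)]^+=[\inf_\Omega Y_T]^-$, which reproduces \rf{3.4.5.2}.

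For the upper bound, put $\z_s=Y_s-v_s$, so that $\z_T=\xi-[\sup_\Omega Y_T]^+\les0$ a.s.\ and $\z$ solves a BDSDE with generator $\widehat f_s=f(s,Y_s,Z_s)-a_sv_s-b_s$, the same $g$-term and the same $Z$. On $\{\z_s>0\}$ one has $Y_s>v_s\ges0$, so $a_0(s,Y_s,Z_s)Y_s\les a_sY_s$ (this is precisely where $a_0\les a$ is used, and it is legitimate only because $Y_s>0$ there) and $f_0\les b_s+C|Z_s|^2$, giving the crucial inequality $\widehat f_s\les a_s\z_s+C|Z_s|^2$. Now apply \autoref{Itoformula} to $P_s=\exp(\l\z_s)$ with $\l=\frac{2C}{1-\a}$. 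The generator of $P$ equals $\l e^{\l\z}\widehat f+\frac{\l^2}{2}e^{\l\z}(|g|^2-|Z|^2)$, the second-order part collecting the forward $W$-variation and the backward $B$-variation. On $\{\z>0\}$, inserting $\widehat f\les a\z+C|Z|^2$ and $|g|^2\les\a|Z|^2$, the coefficient of $|Z|^2$ becomes $\l C-\frac{\l^2}{2}(1-\a)$, which vanishes for the chosen $\l$; thus every quadratic-in-$Z$ term cancels and the generator of $P$ is bounded by $\l a_s\,\z_se^{\l\z_s}$.

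To close the argument a.s., note that $Y,v$ bounded makes $\z$ range in a compact interval, so the elementary bound $\frac{\l\z e^{\l\z}}{e^{\l\z}-1}\les\k$ on $(0,\|\z\|_\i]$ lets me replace the generator estimate by $a_s^+\k\,(P_s-1)^+$ on $\{\z>0\}$. Applying \autoref{Itoformula} to $\frac12((P_s-1)^+)^2$ (after a routine mollification, since this map is only $C^1$), integrating over $[t,T]$ and taking expectation annihilates the forward and backward stochastic integrals; the surviving second-order term is $\frac{\l^2}{2}e^{2\l\z}\mathbf1_{\{\z>0\}}(|g|^2-|Z|^2)\les\frac{\l^2}{2}e^{2\l\z}\mathbf1_{\{\z>0\}}(\a-1)|Z|^2\les0$ and is discarded, leaving $\dbE[((P_t-1)^+)^2]\les 2\k\int_t^T a_s^+\,\dbE[((P_s-1)^+)^2]\,ds$. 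Since $\|a^+\|_{L^1}<\i$, backward Gronwall forces $\dbE[((P_t-1)^+)^2]=0$, i.e.\ $P_t\les1$, i.e.\ $Y_t\les v_t$ a.s., which is \rf{3.4.5}. For the $Z$-estimate \rf{22.2.15.1} I would instead apply \autoref{Itoformula} to the even weight $\psi(y)=\cosh(\l y)$ with $\l>\frac{2C}{1-\a}$ over $[0,T]$ and take expectation: the martingale terms vanish, the second-order part yields $\frac{\l^2}{2}\dbE\int_0^T\cosh(\l Y)(|Z|^2-|g|^2)\,ds$ with the favourable factor $\cosh\ges1$, and the drift $\l\sinh(\l Y)f$ is controlled using $\sinh(\l Y)\,Y\ges0$ (which again lets $a_0\les a\les a^+$ be used despite $a_0$ having no lower bound), $|\sinh|\les\cosh$, $|f_0|\les b+C|Z|^2$ and $|g|^2\les\a|Z|^2$; collecting the $|Z|^2$ terms gives $\frac{\l}{2}[\l(1-\a)-2C]\,\dbE\int_0^T\cosh(\l Y)|Z|^2\,ds\les\mathrm{Const}$, with $\mathrm{Const}$ depending only on $\|Y\|_\i,\|a^+\|_{L^1},\|b\|_{L^1},C,\a$, and the prefactor is positive by the choice of $\l$, whence \rf{22.2.15.1}. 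The main obstacle is exactly the simultaneous control of the three sources of $|Z|^2$ — the quadratic generator, the forward $W$-variation and the backward $B$-variation — with no BMO machinery available (since $\dbF$ is not a filtration): everything hinges on $|g|^2\les\a|Z|^2$ with $\a<1$ and on taking $\l\ges\frac{2C}{1-\a}$ so the net $|Z|^2$ coefficient is nonpositive, while the a.s.\ (rather than in-expectation) nature of \rf{3.4.5} forces the passage through $\frac12((P-1)^+)^2$ and Gronwall, and the one-sidedness of $a_0\les a$ is accommodated by working on $\{Y>0\}$ for the comparison and by the even function $\cosh$ for the $Z$-estimate.
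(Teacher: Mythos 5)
Your strategy is essentially the paper's own: compare $Y$ with the solution of the linear ODE, kill the quadratic-in-$Z$ terms by an exponential with exponent $\frac{2C}{1-\a}$ (so that, via $|g|^2\les\a|z|^2$, the forward and backward second-order variations combine into the factor $1-\a$ and the net coefficient of $|Z|^2$ vanishes or has a good sign), close with Gronwall in expectation through a nonnegative functional that vanishes exactly on $\{u\les0\}$, and extract the $Z$-estimate from a second It\^o identity. Your symmetry reduction for the lower bound is a clean substitute for the paper's ``similarly'', and your $\cosh$-based $Z$-estimate is valid; note that your constant's dependence on $\|b\|_{L^1}$ is honest --- the paper's own proof of \rf{22.2.15.1}, which plugs $\f=0$ into \rf{3.4}, silently drops the term $\int_0^T\Phi'(Y_s)b_s\,ds$ that should appear there, so your version is not weaker than what the paper actually establishes.

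The one step that does not hold up as written is applying \autoref{Itoformula} to $\frac12((P_s-1)^+)^2$ ``after a routine mollification''. The paper's It\^o formula requires $\Phi\in C^2$, and your function has a jump in its second derivative at $P=1$ (equivalently at $\z=0$). To pass to the limit in a mollified formula you must show $\int_t^T{\bf 1}_{\{\z_s=0\}}\big(|Z_s|^2+|g(s,Y_s,Z_s)|^2\big)\,ds=0$ a.s., an occupation-time/local-time fact. For classical semimartingales this is standard, but here $\z$ is the sum of a drift, a forward It\^o integral and a backward It\^o integral, and $\dbF$ is not a filtration, so $\z$ is not a semimartingale for any single filtration and this tool is precisely what is \emph{not} available off the shelf in the BDSDE setting. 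This is exactly the difficulty the paper's construction is engineered to avoid: its test function $e^{\b u}-1-\b u-\frac{\b^2}{2}u^2$, glued with $0$ on $u\les0$, is the exponential minus its second-order Taylor polynomial, which makes the glued function genuinely $C^2$ at $0$. The repair of your argument is immediate and local: replace $((P-1)^+)^2$ by $((P-1)^+)^3$, or run your Gronwall step on $\Phi(\z_s)$ with the paper's $C^2$ function; the rest of your proof (the cancellation $\l C-\frac{\l^2}{2}(1-\a)=0$, the bound $\l\z e^{\l\z}\les\k\,(e^{\l\z}-1)$ on $(0,\|\z\|_\i]$, and the Gronwall closure) goes through unchanged.
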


\begin{proof}
Consider the following linear ordinary differential equation,
$$\f_t=\Big[\sup_{\Omega}(Y_T)\Big]^{+}+\int_t^T(a_s\f_s+b_s)ds,\q~ 0\les t\les T.$$
Then someone could compute that for $0\les t\les T$,
$$
  \f_t=\Big[\sup_{\Omega}(Y_T)\Big]^{+}\cdot\exp\Big(\int_t^Ta_sds\Big)
  +\int_t^Tb_s\exp\Big(\int_t^sa_rdr\Big)ds.
$$
So the result \rf{3.4.5} holds if we can prove that $Y_t\les\f_t$ for all $t\in[0,T]$. Applying It\^{o}'s formula (see \autoref{Itoformula}) to the process $Y_t-\f_t$ and to an increasing $C^2(\dbR)$ function $\Phi$, which will be determined later, deduce that
\begin{align*}
\ds  \Phi(Y_t-\f_t)=&\ \Phi(Y_T-\f_T)+\int_{t}^{T}\Phi'(Y_s-\f_s)[f(s,Y_s,Z_s)-(a_s\f_s+b_s)]ds\nonumber\\
\ns\ds  &\ +\int_{t}^{T}\Phi'(Y_s-\f_s)g(s,Y_s,Z_s)d\oaB_s
     -\int_{t}^{T}\Phi'(Y_s-\f_s)Z_sdW_s\nonumber\\
\ns\ds  &\ +\frac{1}{2}\int_{t}^{T}\Phi''(Y_s-\f_s)|g(s,Y_s,Z_s)|^2ds
     -\frac{1}{2}\int_{t}^{T}\Phi''(Y_s-\f_s)|Z_s|^2ds.\nn
\end{align*}
We denote that for every $s\in[t,T]$,
$$\tilde{a}_s=a_0(s,Y_s,Z_s).$$
From \autoref{AssumpPri}, note that the function $\Phi$ is increasing, we have that for every $s\in[t,T]$,
\begin{align*}
\ds \Phi'&(Y_s-\f_s)\big[f(s,Y_s,Z_s)-(a_s\f_s+b_s)\big]\\
\ns\ds  \les&\ \Phi'(Y_s-\f_s)\big[\tilde{a}_sY_s+b_s+C|Z_s|^2-(a_s\f_s+b_s)\big]\\
\ns\ds  \les&\ \Phi'(Y_s-\f_s)\big[\tilde{a}_s(Y_s-\f_s)+(\tilde{a}_s-a_s)\f_s+C|Z_s|^2\big],
\end{align*}
and
$$
\Phi''(Y_s-\f_s)|g(s,Y_s,Z_s)|^2\les\a\Phi''(Y_s-\f_s)|Z_s|^2.
$$
Due to that $(\tilde{a}_s-a_s)\f_s\les0$, we obtain
\begin{align}\label{3.4}
\ds  \Phi(Y_t-\f_t)\les&\ \Phi(Y_T-\f_T)+\int_{t}^{T}
      \tilde{a}_s\Phi'(Y_s-\f_s)(Y_s-\f_s)ds\nonumber\\
\ns\ds  &\ +\int_{t}^{T}\Big(C\Phi'-\frac{1-\a}{2}\Phi''\Big)(Y_s-\f_s)|Z_s|^2ds\\
\ns\ds  &\ +\int_{t}^{T}\Phi'(Y_s-\f_s)g(s,Y_s,Z_s)d\oaB_s
     -\int_{t}^{T}\Phi'(Y_s-\f_s)Z_sdW_s.\nonumber
\end{align}
Set $M=\|Y\|_\i+\|\f\|_\i$ and define the function $\Phi$ on the interval $[-M,M]$ by
$$\Phi(u)=\left\{\ba{ll}
\ds e^{\frac{2C}{1-\a}u}-1-\frac{2C}{1-\a}u-\frac{2C^2}{(1-\a)^2}u^2,\q\ & u\in[0,M],\\
\ns\ds0, & u\in[-M,0].
\ea\right.$$
Then it is easy to check that for all $u\in[-M,M]$,
\begin{align*}
\ds     \Phi(u)\ges0,\ \hb{ and }\ \Phi(u)=&\ 0\ \hb{ if and only if }\ u\les0, \nonumber\\
\ns\ds  \Phi'(u)\ges&\ 0,\nonumber\\
\ns\ds  0\les u\Phi'(u)\les&\ (M+1)\frac{2C}{1-\a}\Phi(u), \label{3.4.1}\\
\ns\ds  C\Phi'-\frac{1-\a}{2}\Phi''\les&\ 0. \nonumber
\end{align*}
Hence, if we set
$$k_t=a^{+}_t(M+1)\frac{2C}{1-\a},$$
then the function $k$ is positive and deterministic.
Note that $Y_T-\f_T\les0$ implies that $\Phi(Y_T-\f_T)=0$, and thus for all $0\les t\les T$, one has
\begin{equation}\label{22.3.2.3}
\begin{aligned}
\ds  0\les&\ \Phi(Y_t-\f_t)\les \int_{t}^{T}k_s\Phi(Y_s-\f_s)ds\\
\ns\ds  & +\int_{t}^{T}\Phi'(Y_s-\f_s)g(s,Y_s,Z_s)d\oaB_s
     -\int_{t}^{T}\Phi'(Y_s-\f_s)Z_sdW_s.
\end{aligned}
\end{equation}
Note that the term $\Phi'(Y_s-\f_s)$ is bounded on the interval $t\les s\les T$, so the terms $\Phi'(Y_s-\f_s)Z_s$ and $\Phi'(Y_s-\f_s)g(s,Y_s,Z_s)$ belong to the spaces
$L_{\dbF}^2(t,T;\dbR^d)$ and $L_{\dbF}^2(t,T;\dbR^l)$, respectively.
Taking expectations on both side of the inequality \rf{22.3.2.3} implies that
$$ 0\les \dbE\Phi(Y_t-\f_t)\les \dbE\int_{t}^{T}k_s\Phi(Y_s-\f_s)ds.$$
Then Gronwall's inequality yields that
$$\dbE\Phi(Y_t-\f_t)=0,\q~ \forall t\in[0,T].$$
Therefore, note that $\Phi(u)\ges0$, for all $t\in[0,T],$
$$\Phi(Y_t-\f_t)=0,\q~ \hb{ a.s.}$$
Finally, again, note that $\Phi(u)=0$ if and only if $u\les0$, we obtain that for every $t\in[0,T],$
$$Y_t-\f_t\les0,\q~\hb{ a.s.},$$
which implies that the inequality \rf{3.4.5} holds.
Similarly, by using the same computation, one can prove that \rf{3.4.5.2} holds too.

\ms

Finally, in order to prove the estimate \rf{22.2.15.1}, we use again \rf{3.4} with $t=0,\ \f=0$, $M=\|Y\|_\i$,
and define $\Phi$ on $[-M,M]$ by
\begin{equation*}\label{3.6.1}
\Phi(u)=\frac{1-\a}{2C^2}\Big[\exp\Big(\frac{2C}{1-\a}(u+M)\Big)-1-\frac{2C}{1-\a}(u+M)\Big].
\end{equation*}
It is easy to check that, for every $u\in[-M,M]$,
\begin{align*}
\ds  \Phi(u)\ges&\ 0,\\
\ns\ds  \Phi'(u)\ges&\ 0,\\
\ns\ds  0\les u\Phi'(u)\les&\ \frac{M}{C}\Big[\exp\Big(\frac{4CM}{1-\a}\Big)-1\Big],\\
\ns\ds  \frac{1-\a}{2}\Phi''-C\Phi'=&\ 1.
\end{align*}
Then \rf{3.4} implies that
\begin{align*}
  0\les\Phi(Y_0)\les&\ \Phi(Y_T)
  +\int_{0}^{T}a_s^{+}\frac{M}{C}\Big[\exp\Big(\frac{4CM}{1-\a}\Big)-1\Big]ds
  -\int_{0}^{T}|Z_s|^2ds\\
  &\ +\int_{0}^{T}\Phi'(Y_s)g(s,Y_s,Z_s)d\oaB_s
     -\int_{0}^{T}\Phi'(Y_s)Z_sdW_s,
\end{align*}
which leads that
\begin{align*}
  \dbE\int_{0}^{T}|Z_s|^2ds\les
  \Phi(M)+\frac{M}{C}\Big[\exp\Big(\frac{4CM}{1-\a}\Big)-1\Big]\|a^{+}\|_{L^1([0,T];\dbR)}.
\end{align*}
This completes the proof.
\end{proof}

An immediate consequence of this proposition is the following corollary.

\begin{corollary}
Let $(Y,Z)\in L_{\dbF}^\i(0,T;\dbR)\times L_{\dbF}^2(0,T;\dbR^d)$ be a solution of BDSDE with parameters $(f,g,\xi)$, where  $\xi$ belongs to $ L^\i_{\sF_T}(\Omega;\dbR)$, and $f$ and $g$ satisfy the condition \autoref{AssumpPri} with $a,b$, $C>0$ and $\a\in(0,1)$ such that $a^{+},b\in L^1(0,T;\dbR)$, then
$$\|Y\|_{\i}\les\big(\|\xi\|_{\i}+\|b\|_{L^1([0,T];\dbR)}\big)\exp\big(\|a^+\|_{L^1([0,T];\dbR)}\big).$$
\end{corollary}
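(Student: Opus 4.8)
The plan is to read off the bound directly from the two pointwise inequalities \rf{3.4.5} and \rf{3.4.5.2} of \autoref{PriEstimate}, specialized to the present hypotheses. Since the terminal condition of \rf{BDSDE} is $Y_T=\xi$, I would first replace $\sup_{\Om}(Y_T)$ and $\inf_{\Om}(Y_T)$ by the corresponding quantities for $\xi$ and use the elementary bounds $\big[\sup_{\Om}\xi\big]^{+}\les\|\xi\|_{\i}$ and $\big[\inf_{\Om}\xi\big]^{-}\ges-\|\xi\|_{\i}$, which are immediate from the definition of the essential supremum norm.

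The key observation is that every exponential weight appearing in \rf{3.4.5}--\rf{3.4.5.2} is controlled by $\exp\big(\|a^{+}\|_{L^1}\big)$. Indeed, since $a_r\les a_r^{+}$ for all $r$, one has $\int_t^s a_r\,dr\les\int_0^T a_r^{+}\,dr=\|a^{+}\|_{L^1([0,T];\dbR)}$ for all $t\les s\les T$, whence $\exp\big(\int_t^s a_r\,dr\big)\les\exp\big(\|a^{+}\|_{L^1}\big)$, and the same bound applies to $\exp\big(\int_t^T a_s\,ds\big)$. Plugging these into \rf{3.4.5} and using $b\ges0$ gives, for every $t$,
$$Y_t\les\|\xi\|_{\i}\exp\big(\|a^{+}\|_{L^1}\big)+\exp\big(\|a^{+}\|_{L^1}\big)\int_0^T b_s\,ds=\big(\|\xi\|_{\i}+\|b\|_{L^1}\big)\exp\big(\|a^{+}\|_{L^1}\big),\q\hb{a.s.}$$
A symmetric computation starting from \rf{3.4.5.2} yields the matching lower bound $Y_t\ges-\big(\|\xi\|_{\i}+\|b\|_{L^1}\big)\exp\big(\|a^{+}\|_{L^1}\big)$. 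Combining the two inequalities gives $|Y_t|\les\big(\|\xi\|_{\i}+\|b\|_{L^1}\big)\exp\big(\|a^{+}\|_{L^1}\big)$ a.s.\ for every $t\in[0,T]$, and taking the essential supremum over $(t,\o)\in[0,T]\ts\Om$ delivers the claim.

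As for the main difficulty, there is essentially none: the corollary is a direct specialization of \autoref{PriEstimate}, so no new probabilistic input (It\^o's formula, Gronwall's inequality, the stochastic integrals) is needed beyond what that proposition already supplies. The only points requiring minor care are the bookkeeping of the positive and negative parts of $\xi$ and the uniform domination of the two exponential integrals by $\|a^{+}\|_{L^1}$; once these are in place the estimate follows from monotonicity of the exponential and the nonnegativity of $b$.
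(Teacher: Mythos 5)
Your proof is correct and is exactly the argument the paper intends: the corollary is stated there as an immediate consequence of \autoref{PriEstimate}, obtained precisely by specializing \rf{3.4.5} and \rf{3.4.5.2} with $Y_T=\xi$, dominating the exponentials by $\exp\big(\|a^{+}\|_{L^1}\big)$, and using $b\ges0$. The only detail worth spelling out (which your ``symmetric computation'' glosses over) is the sign care in the lower bound, where the nonpositive factor $\big[\inf_{\Omega}\xi\big]^{-}$ reverses the inequality when the exponential is enlarged, but this is handled exactly as you indicate and the conclusion stands.
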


\subsection{Monotone stability}
In this subsection, we would like to study the monotone stability of BDSDEs \rf{BDSDE} with quadratic growth, which is also important for us to prove the existence later.

\begin{proposition}[Monotone stability]\label{Mono-stable}
Let $(f,g,\xi)$ be a set of parameters and let $(f^n,g,\xi^n)$ be a sequence of parameters such that:
\begin{itemize}
\item [$\rm(i)$] There exist two positive constants $C$ and $\a$ with $0<\a<1$ such that for each $n\in\dbN$,
      \begin{align}
        \ds &|f^n(t,y,z)|\les C(1+|y|+|z|^2),\q~  |g(t,y,z)|^2\les\a|z|^2,\q~ \forall  t\in[0,T], y\in\dbR, z\in\dbR^d,\label{22.2.17.4}\\
        \ns\ds & |g(t,y_1,z_1)-g(t,y_2,z_2)|^2\les C|y_1-y_2|^2+\a|z_1-z_2|^2,\q~ \forall t\in[0,T], y_1,y_2\in\dbR, z_1,z_2\in\dbR^d.\nn
      \end{align}
  \item [$\rm(ii)$]  The sequence $\{f^n;n\in\dbN\}$ converges to $f$ locally uniformly on $[0,T] \times \mathbb{R} \times \mathbb{R}^{d}$, and for each $n \in \mathbb{N},$  the sequence $\{\xi^n;n\in\dbN\}$  converges to $\xi$ in $L^\i_{\sF_T}(\Omega;\dbR)$.
  \item [$\rm(iii)$] For each $n\in\dbN$, the BDSDE with parameters $(f^n,g,\xi^n)$ has a solution
  $$(Y^n,Z^n)\in L_{\dbF}^\i(0,T;\dbR)\times L_{\dbF}^2(0,T;\dbR^d),$$
  and the sequence $\{Y^n;n\in\dbN\}$ is monotonic such that for every $n\in\dbN$, $\|Y^n\|_\i\les M$, where $M$ is a positive constant.
\end{itemize}
Then there exists a pair of processes $(Y,Z)\in L_{\dbF}^\i(0,T;\dbR)\times L_{\dbF}^2(0,T;\dbR^d)$ such that
\begin{equation*}
\left\{\begin{aligned}
\ds  &\lim_{n\rightarrow\i}Y^n=Y \hb{ uniformy on } [0,T],\\
\ns\ds  &\{Z^n;n\in\dbN\} \hb{ converges to $Z$ in } L_{\dbF}^2(0,T;\dbR^d).
\end{aligned}\right.
\end{equation*}
Moreover, $(Y,Z)$ is a solution of BDSDE with parameters $(f,g,\xi)$. In particular, if the sequence $\{Y^n;n\in\dbN\}$ has continuous paths for every $n\in\dbN$, then $Y$ has continuous paths too.
\end{proposition}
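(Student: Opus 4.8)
The plan is to follow the monotone-stability scheme of Kobylanski \cite{Kobylanski-00}, adapted to the doubly stochastic setting, in four stages: the monotone limit for $Y$, a uniform $L^2$-bound on $Z^n$, the strong $L^2$-convergence of $Z^n$ (the crux), and finally the passage to the limit in the equation together with the uniform convergence of $Y^n$. Assume without loss of generality that $\{Y^n\}$ is nondecreasing. For each $(t,\o)$ the sequence $Y^n_t(\o)$ is monotone and bounded by $M$, hence converges to a limit $Y_t(\o)$; the limit $Y$ is $\sF_t$-measurable, $\|Y\|_\i\les M$, and by dominated convergence $Y^n\to Y$ in $L^2_\dbF(0,T;\dbR)$ (indeed in every $L^p_\dbF$). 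Since $|f^n(t,y,z)|\les C(1+|y|+|z|^2)$ and $\|Y^n\|_\i\les M$, along each solution one has $|f^n(s,Y^n_s,z)|\les C(1+M)+C|z|^2$, so the a priori estimate \autoref{PriEstimate} applies with $a\equiv0$ and $b\equiv C(1+M)$; its conclusion \rf{22.2.15.1} provides a constant $K$, independent of $n$, with $\dbE\int_0^T|Z^n_s|^2ds\les K$. Thus $\{Z^n\}$ is bounded in $L^2_\dbF(0,T;\dbR^d)$ and, along a subsequence, $Z^n\rightharpoonup Z$ weakly.

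The main step is the strong convergence of $Z^n$. Both $Y^n$ and $Y^m$ solve BDSDEs, so does their difference, and I would apply \autoref{Itoformula} to $\Phi(Y^n-Y^m)$ for a convex $\Phi\in C^2$ with $\Phi\ges0$, $\Phi(0)=\Phi'(0)=0$, and $\frac{1-\a}{2}\Phi''-2C\Phi'\ges c_0>0$ on $[-2M,2M]$ (an exponential profile, as in the proof of \autoref{PriEstimate}). Two features of the doubly stochastic structure are decisive. First, the backward integral contributes $+\frac12\Phi''|g(Y^n,Z^n)-g(Y^m,Z^m)|^2$ while the forward one contributes $-\frac12\Phi''|Z^n-Z^m|^2$; invoking the Lipschitz bound on $g$ from hypothesis (i), namely $|g(Y^n,Z^n)-g(Y^m,Z^m)|^2\les C|Y^n-Y^m|^2+\a|Z^n-Z^m|^2$ with $\a<1$, these combine into $-\frac{1-\a}{2}\Phi''|Z^n-Z^m|^2+\frac{C}{2}\Phi''|Y^n-Y^m|^2$. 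Second, the quadratic drift satisfies $|f^n(Y^n,Z^n)-f^m(Y^m,Z^m)|\les 2C(1+M)+C|Z^n|^2+C|Z^m|^2$, and after writing $|Z^n|^2\les 2|Z^n-Z^m|^2+2|Z^m|^2$ the $|Z^n-Z^m|^2$ part is absorbed by the defining inequality for $\Phi$. Taking expectations leaves $c_0\,\dbE\int_0^T|Z^n-Z^m|^2ds$ on the left, against $\dbE\,\Phi(\xi^n-\xi^m)\to0$, a term controlled by $\|Y^n-Y^m\|_{L^2}\to0$, and a residual of the form $c_1\,\dbE\int_0^T\Phi'(Y^n-Y^m)|Z^m|^2ds$.

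This residual is where the difficulty concentrates, and I expect it to be the main obstacle. The factor $|Z^m|^2$ is only $L^1$-bounded and need not be uniformly integrable, and in this setting the BMO/stopping-time devices of the classical quadratic theory are unavailable precisely because $\dbF$ is not a filtration. The remedy is to compare $Y^n$ not with $Y^m$ but with the fixed limit $Y$, whose weak-limit control $Z$ is a fixed element of $L^2_\dbF$: the residual then becomes $\dbE\int_0^T\Phi'(Y^n-Y)|Z|^2ds$, in which $\Phi'(Y^n-Y)$ is bounded and tends to $\Phi'(0)=0$ pointwise while $|Z|^2$ is a \emph{fixed} $L^1$-function, so dominated convergence drives it to zero. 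To run this argument one needs $(Y,Z)$ already to solve the limit equation, which is obtained from the weak convergence of $Z^n$ by Kobylanski's bootstrap; thus the two assertions --- that $(Y,Z)$ solves the limit BDSDE and that $Z^n\to Z$ in $L^2_\dbF$ --- are established simultaneously, the closure resting on comparing against a fixed $L^2$-limit in the absence of uniform integrability.

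Once $Z^n\to Z$ in $L^2_\dbF$ and $Y^n\to Y$ in $L^2_\dbF$ with $\xi^n\to\xi$, I would pass to the limit in \rf{BDSDE}: the term $\int_\cdot^T Z^n\,dW$ converges by the It\^o isometry; $\int_\cdot^T g(Y^n,Z^n)\,d\oaB$ converges because $g(Y^n,Z^n)\to g(Y,Z)$ in $L^2_\dbF$ by the Lipschitz structure of $g$; and $\int_\cdot^T f^n(Y^n,Z^n)\,ds\to\int_\cdot^T f(Y,Z)\,ds$ in $L^1$ by a generalized dominated convergence (Pratt), using $f^n(Y^n,Z^n)\to f(Y,Z)$ a.e. along a subsequence (local uniform convergence $f^n\to f$ plus a.e. convergence of $(Y^n,Z^n)$) together with the domination $|f^n(Y^n,Z^n)|\les C(1+M)+C|Z^n|^2$ and $|Z^n|^2\to|Z|^2$ in $L^1$. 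Hence $(Y,Z)\in L^\i_\dbF(0,T;\dbR)\times L^2_\dbF(0,T;\dbR^d)$ solves the BDSDE with parameters $(f,g,\xi)$. Finally, writing the equation for $Y^n-Y$ and applying the Burkholder--Davis--Gundy inequality to both the forward and backward martingale parts yields $\dbE[\sup_{t\in[0,T]}|Y^n_t-Y_t|^2]\to0$, i.e. uniform convergence on $[0,T]$ (a.s. along a subsequence, extended to the whole sequence by monotonicity); in particular, if each $Y^n$ has continuous paths, the uniform limit $Y$ is continuous as well.
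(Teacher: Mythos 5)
Your overall scheme (monotone pointwise limit for $Y$, uniform $L^2$ bound on $Z^n$ via \autoref{PriEstimate}, strong $L^2$ convergence of $Z^n$ as the central step, then passage to the limit in the equation) matches the paper's, but your execution of the central step contains a genuine circularity that the paper's proof is specifically built to avoid. You propose to handle the problematic residual by applying It\^o's formula to $\Phi(Y^n-Y)$, where $Y$ is the monotone limit and $Z$ its weak-limit control, and you concede this requires $(Y,Z)$ to already solve the limit BDSDE, suggesting that the limit equation ``is obtained from the weak convergence of $Z^n$'' and that the two assertions are ``established simultaneously.'' This cannot work as stated: before strong convergence of $Z^n$ is known, there is no equation satisfied by $Y$, because weak $L^2$ convergence of $Z^n$ is not enough to pass to the limit in $\int f^n(s,Y^n_s,Z^n_s)\,ds$ --- the quadratic functional $z\mapsto \dbE\int|z|^2$ is only weakly lower semicontinuous, not weakly continuous --- so $\Phi(Y^n-Y)$ is not amenable to \autoref{Itoformula}, and the ``simultaneity'' is precisely the circle that must be broken, not a proof device.

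The paper breaks this circle by applying It\^o's formula only to $\Phi(Y^m-Y^n)$ for $m\ges n$, a difference of two genuine solutions (monotonicity ensures $Y^m-Y^n\ges0$), and letting the weak limit $Z$ enter purely as a fixed $L^2$ function through the pointwise bound \rf{3.6}, namely $|f^m(t,Y^m_t,Z^m_t)-f^n(t,Y^n_t,Z^n_t)|\les C_0\big[1+|Z^m_t-Z^n_t|^2+|Z_t-Z^n_t|^2+|Z_t|^2\big]$; the function $\Phi$ with $\frac{1-\a}{2}\Phi''=4C_0\Phi'+2$ and $\Phi(0)=\Phi'(0)=0$ absorbs the $|Z^m-Z^n|^2$ terms. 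Fixing $n$ and letting $m\to\i$, one uses dominated convergence for the $Y^m$-terms and weak lower semicontinuity for the $Z^m$-terms to obtain
\begin{equation*}
\dbE\int_0^T|Z_t-Z^n_t|^2dt\les
\dbE\Big[\Phi(Y_T-Y^n_T)+C_0\int_0^T\Phi'(Y_t-Y^n_t)\big(1+|Z_t|^2\big)dt
+\frac{C}{2}\int_0^T\Phi''(Y_t-Y^n_t)|Y_t-Y^n_t|^2dt\Big],
\end{equation*}
whose right-hand side vanishes as $n\to\i$ by dominated convergence; your idea of ``a fixed $L^1$ weight $|Z|^2$'' does appear here, but only inside an estimate, never through an equation for $Y$. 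A secondary flaw: your final step claims $\dbE[\sup_t|Y^n_t-Y_t|^2]\to0$ via Burkholder--Davis--Gundy, but the drift difference is dominated only by $C_0(1+|Z^n|^2+|Z|^2)$, whose time integral is bounded in $L^1(\Om)$, not $L^2(\Om)$, so the required square-integrability is unavailable; the paper instead gets a.s.\ uniform convergence pathwise, extracting a dominating function $\sup_j|Z^{n_j}|\in L^2_\dbF(0,T;\dbR^d)$ (Lemma 2.5 of Kobylanski \cite{Kobylanski-00}) and using convergence in probability, then a.s.\ along a subsequence, for the two stochastic integrals. That last issue is repairable; the circularity in the strong-convergence step is the real gap.
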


\begin{proof}
From the assumptions we see that for all $t \in [0,T]$, the sequence $\{Y^n_t;n\in\dbN\}$ is monotonic and bounded, and therefore it has a limit, which denoted by $Y$.
Without lose of generality, we consider that the sequence $\{Y^n;n\in\dbN\}$ is monotonic increasing to $Y$.
Moreover, from \autoref{PriEstimate},  there is a  positive constant $K$ such that,
$$\dbE\int_0^T|Z^n|^2ds\les K,\q~ \forall n\in\dbN.$$
Then there exists a process $Z\in L_{\dbF}^2(0,T;\dbR^d)$ such that a subsequence of $\{Z^n;n\in\dbN\}$ converges to $Z$ weakly in $L_{\dbF}^2(0,T;\dbR^d)$.
For simplicity presentation, by otherwise choosing a subsequence, we may assume that the whole sequence $\{Z^n;n\in\dbN\}$  converges to $Z$ weakly in $L_{\dbF}^2(0,T;\dbR^d)$.

\ms

Next, we would like to divide the proof into two steps, and denote the following notations:
\begin{align*}
\ds  \Delta Y^n\deq Y-Y^n,\q\ \Delta Y^{m,n}\deq Y^m-Y^n,\\
\ns\ds  \Delta Z^n\deq Z-Z^n,\q\  \Delta Z^{m,n}\deq Z^m-Z^n.
\end{align*}

$\emph{Step 1.}$ The sequence  $\{Z^n;n\in\dbN\}$ strongly converges to $Z$ in  $L_{\dbF}^2(0,T;\dbR^d)$, i.e.,
\begin{equation*}\label{Zn}
  \lim_{n\rightarrow\i}\dbE\int_0^T|\Delta Z^n_t|^2dt=0.
\end{equation*}
The main arguments of this step come from Zhang \cite{Zhang-17} and Kobylanski \cite{Kobylanski-00}.
It is easy to see that the pair $(\Delta Y^{m,n},\Delta Z^{m,n})$ satisfies the following equation,
$$\left\{\ba{ll}
\ds -d\Delta Y^{m,n}_t= [f^m(t,Y^m_t,Z^m_t)-f^n(t,Y^n_t,Z^n_t)]dt
\\
\ns\ns\ds\qq\qq\q\ \ +[g(t,Y^m_t,Z^m_t)-g(t,Y^n_t,Z^n_t)]d\oaB_t
- \Delta Z^{m,n}_tdW_{t},\q~t\in [0,T],\\
\ns\ns\ds\q\ \Delta Y^{m,n}_T=\ \xi^m-\xi^n.
\ea\right.$$
By assumptions, there exists a constant $C_0$ depending on $C$ and $M$, such that for all $m,n\in\dbN$, $t\in[0,T]$,
\begin{equation}\label{3.6}
\begin{aligned}
\ds |f^m(t,Y^m_t,Z^m_t)-f^n(t,Y^n_t,Z^n_t)|\les&\ C_0[1+|\Delta Z^{m,n}_t|^2+|\Delta Z^n_t|^2+|Z_t|^2],\\
\ns\ds |g(t,Y^m_t,Z^m_t)-g(t,Y^n_t,Z^n_t)|^2\les&\ C|\Delta Y^{m,n}_t|^2+\a|\Delta Z^{m,n}_t|^2.
\end{aligned}
\end{equation}
%
%
Let $\Phi:\dbR^+\rightarrow\dbR^+$ be a smooth increasing function which will be specified later. For $m\ges n$, using It\^{o}'s formula (see \autoref{Itoformula}) to $\Phi(\Delta Y^{m,n})$ we have that
\begin{align*}
\ds d\Phi(\Delta Y^{m,n}_t)=& -\Phi'(\Delta Y^{m,n}_t)[f^m(t,Y^m_t,Z^m_t)-f^n(t,Y^n_t,Z^n_t)]dt\\
\ns\ds & -\Phi'(\Delta Y^{m,n}_t)[g(t,Y^m_t,Z^m_t)-g(t,Y^n_t,Z^n_t)]d\oaB_t+\Phi'(\Delta Y^{m,n}_t)\Delta Z^{m,n}_tdW_{t}\\
\ns\ds & -\frac{1}{2}\Phi''(\Delta Y^{m,n}_t)|g(t,Y^m_t,Z^m_t)-g(t,Y^n_t,Z^n_t)|^2dt
    +\frac{1}{2}\Phi''(\Delta Y^{m,n}_t)|\Delta Z^{m,n}_t|^2dt\\
\ns\ds \ges&-C_0\Phi'(\Delta Y^{m,n}_t)[1+|\Delta Z^{m,n}_t|^2+|\Delta Z^{n}_t|^2+|Z_t|^2]dt\\
\ns\ds & -\Phi'(\Delta Y^{m,n}_t)[g(t,Y^m_t,Z^m_t)-g(t,Y^n_t,Z^n_t)]d\oaB_t
+\Phi'(\Delta Y^{m,n}_t)\Delta Z^{m,n}_tdW_{t}\\
\ns\ds &-\frac{C}{2}\Phi''(\Delta Y^{m,n}_t)|\Delta Y^{m,n}_t|^2dt
 -\frac{\a}{2}\Phi''(\Delta Y^{m,n}_t)|\Delta Z^{m,n}_t|^2dt
    +\frac{1}{2}\Phi''(\Delta Y^{m,n}_t)|\Delta Z^{m,n}_t|^2dt,
\end{align*}
where we have use the inequality \rf{3.6}.

\ms

In the following, we expect $\Delta Z^{m,n}$ and $\Delta Z^{n}$ to be closed. For this purpose, define
$$\Phi(u)=\frac{1-\a}{8C_0^2}\Big[\exp(\frac{4C_0}{1-\a}u)-\frac{4C_0}{1-\a}u-1\Big].$$
It is straightforward to check that for $u\ges0$, we have
\begin{align*}
\ds  &\Phi(u)\ges 0\q\hb{with}\q\Phi(0)=0,\\
\ns\ds  &\Phi'(u)\ges 0\q\hb{with}\q\Phi'(0)=0,\\
\ns\ds & \frac{1-\a}{2}\Phi''(u)=4C_0\Phi'(u)+ 2.
\end{align*}
Note that  $Y^m\ges Y^n$ when $m\ges n$. Then
\begin{align*}
\ds  d\Phi(\Delta Y^{m,n}_t)\ges&
-C_0\Phi'(\Delta Y^{m,n}_t)[1+|\Delta Z^{m,n}_t|^2+|\Delta Z^{n}_t|^2+|Z_t|^2]dt\\
\ns\ds    & -\Phi'(\Delta Y^{m,n}_t)[g(t,Y^m_t,Z^m_t)-g(t,Y^n_t,Z^n_t)]d\oaB_t
+\Phi'(\Delta Y^{m,n}_t)\Delta Z^{m,n}_tdW_{t}\\
\ns\ds    & +[4C_0\Phi'(\Delta Y^{m,n}_t)+ 2]|\Delta Z^{m,n}_t|^2dt
-\frac{C}{2}\Phi''(\Delta Y^{m,n}_t)|\Delta Y^{m,n}_t|^2dt\\
\ns\ds    =&-C_0\Phi'(\Delta Y^{m,n}_t)[1+|\Delta Z^{n}_t|^2+|Z_t|^2]dt
-\frac{C}{2}\Phi''(\Delta Y^{m,n}_t)|\Delta Y^{m,n}_t|^2dt\\
\ns\ds    & -\Phi'(\Delta Y^{m,n}_t)[g(t,Y^m_t,Z^m_t)-g(t,Y^n_t,Z^n_t)]d\oaB_t
+\Phi'(\Delta Y^{m,n}_t)\Delta Z^{m,n}_tdW_{t}\\
\ns\ds    & +[3C_0\Phi'(\Delta Y^{m,n}_t)+ 2]|\Delta Z^{m,n}_t|^2dt,
\end{align*}
and thus
\begin{align*}
\ds &\dbE\Phi(\Delta Y^{m,n}_0)
+\dbE\int_0^T[3C_0\Phi'(\Delta Y^{m,n}_t)+ 2]|\Delta Z^{m,n}_t|^2dt
-C_0\dbE\int_0^T\Phi'(\Delta Y^{m,n}_t)|\Delta Z^{n}_t|^2dt \\
\ns\ds \les&\dbE\Big[\Phi(\Delta Y^{m,n}_T)
+C_0\int_0^T\Phi'(\Delta Y^{m,n}_t)[1+|Z_t|^2]dt
+\frac{C}{2}\int_t^T\Phi''(\Delta Y^{m,n}_t)|\Delta Y^{m,n}_t|^2dt\Big].
\end{align*}
Due to that $\{Z^m;m\ges1\}$ converges to $Z$ weakly in $L_{\dbF}^2(0,T;\dbR^d)$, so next we would like to
 fix $n$ and pass to the limit as $m\rightarrow\i$.
The convergence of $Y^m\rightarrow Y$ being pointwise, and $Y^m$ being bounded by $M$, by Lebesgue's dominated convergence theorem, we have that
\begin{equation*}
\begin{aligned}
\ds &\liminf_{m\rightarrow\i}\dbE\int_0^T[3C_0\Phi'(\Delta Y^{n}_t)+ 2]|\Delta Z^{m,n}_t|^2dt
-C_0\dbE\int_0^T\Phi'(\Delta Y^{n}_t)|\Delta Z^{n}_t|^2dt \\
\ns\ds \les&\dbE\Big[\Phi(\Delta Y^{n}_T)
+C_0\int_0^T\Phi'(\Delta Y^{n}_t)[1+|Z_t|^2]dt
+\frac{C}{2}\int_0^T\Phi''(\Delta Y^{n}_t)|\Delta Y^{n}_t|^2dt\Big],
\end{aligned}
\end{equation*}
and as
\begin{align*}
 \liminf_{m\rightarrow\i}\bigg[-\dbE\int_0^T\Phi'(\Delta Y^{n}_t)|\Delta Z^{m,n}_t|^2dt\bigg]
\les -\dbE\int_0^T\Phi'(\Delta Y^{n}_t)|\Delta Z^{n}_t|^2dt,
\end{align*}
we obtain that
\begin{equation}\label{22.2.17.3}
\begin{aligned}
\ds &\liminf_{m\rightarrow\i}\dbE\int_0^T[2C_0\Phi'(\Delta Y^{n}_t)+ 2]|\Delta Z^{m,n}_t|^2dt\\
\ns\ds \les&\dbE\Big[\Phi(\Delta Y^{n}_T)
+C_0\int_0^T\Phi'(\Delta Y^{n}_t)[1+|Z_t|^2]dt
+\frac{C}{2}\int_0^T\Phi''(\Delta Y^{n}_t)|\Delta Y^{n}_t|^2dt\Big].
\end{aligned}
\end{equation}
Noting that by the convexity of the lower semi-continuous functional
$$J(Z)=\dbE\int_0^T|\Delta Z^{n}_t|^2dt,$$
we have
\begin{align}\label{22.2.17.2}
 \dbE\int_0^T|\Delta Z^{n}_t|^2dt
 \les \liminf_{m\rightarrow\i}\dbE\int_0^T|\Delta Z^{m,n}_t|^2dt.
\end{align}
Hence,  combining \rf{22.2.17.3} and \rf{22.2.17.2} we have that
\begin{equation*}
\begin{aligned}
\ds\dbE\int_0^T|\Delta Z^{n}_t|^2dt
\les\dbE\Big[\Phi(\Delta Y^{n}_T)
+C_0\int_0^T\Phi'(\Delta Y^{n}_t)[1+|Z_t|^2]dt
+\frac{C}{2}\int_0^T\Phi''(\Delta Y^{n}_t)|\Delta Y^{n}_t|^2dt\Big].
\end{aligned}
\end{equation*}
By Lebesgue's dominated convergence theorem, and note that $\Phi(0)=\Phi'(0)=0$, we have that as $n \rightarrow \infty$, the right-hand side of the above inequality converges to 0, as well as the left-hand side. Now, passing to the limit as $n \rightarrow \infty$ on the above inequality, we obtain that
$$\limsup _{n \rightarrow \infty} \dbE\int_0^T|\Delta Z^{n}_t|^2dt=0,$$
which implies that the sequence  $\{Z^n;n\in\dbN\}$ strongly converges to $Z$ in  $L_{\dbF}^2(0,T;\dbR^d)$.

\ms

$\emph{Step 2.}$ The uniform convergence of a subsequence of $\{Y^n;n\in\dbN\}$ to $Y$.

\ms

At this stage we know that for all $t\in[0,T]$, ${\rm lim}_{n\rightarrow\i}Y^n_t=Y_t$, and the sequence $\{Z^n;n\in\dbN\}$ converges to $Z$ in $L_{\dbF}^2(0,T;\dbR^d)$. Then from Lemma 2.5 of Kobylanski \cite{Kobylanski-00}, there exists a subsequence $\{Z^{n_j};n_j\in\dbN\}$ of $\{Z^n;n\in\dbN\}$ such that
$Z^{n_j}$ converges almost surely to $Z$ and such that
$$\widetilde{Z}\deq\sup_j|Z^{n_j}|\in L_{\dbF}^2(0,T;\dbR^d).$$
For simplicity presentation, by otherwise choosing a subsequence, we would like to still assume that the whole sequence $\{Z^n;n\in\dbN\}$  converges almost surely to $Z$ in $L_{\dbF}^2(0,T;\dbR^d)$, and thus we have
$$Z^n\rightarrow Z\ \hb{ a.s. }dt\otimes d\dbP\q~\hb{and}\q~
\widetilde{Z}=\sup_n|Z^{n}|\in L_{\dbF}^2(0,T;\dbR^d).$$
Recall that the sequence $\{f^{n};n\in\dbN\}$ converges locally uniformly to $f$, we obtain that
$$
\lim _{n \rightarrow \infty} f^{n}\left(t, Y_{t}^{n}, Z_{t}^{n}\right)=f\left(t, Y_{t}, Z_{t}\right), \q~ t\in[0,T], \ a.s.
$$
In addition, due to that $f^n$ satisfies the condition \rf{22.2.17.4}, one has
$$
\left|f^{n}\left(t, Y_{t}^{n}, Z_{t}^{n}\right)\right| \les C(1+M+ \sup _{n}\left|Z_{t}^{n}\right|^{2})\les C_0(1+ \widetilde{Z}_t^{2}) .
$$
Thus, Lebesgue's dominated convergence theorem gives
\begin{equation*}\label{22.4.13.2}
\lim_{n\rightarrow\i}\int_0^Tf^n(t,Y^n_t,Z^n_t)dt=\int_0^Tf(t,Y_t,Z_t)dt,\q~ a.s.
\end{equation*}
Similarly, we have
\begin{equation*}
  \lim_{n\rightarrow\i}\dbE\int_0^T|g(t,Y^n_t,Z_t^n)-g(t,Y_t,Z_t)|^2dt
  \les\lim_{n\rightarrow\i}\dbE\int_0^T[C|\Delta Y^n_t|^2+\a|\Delta Z^n_t|^2]dt=0,
\end{equation*}
and therefore
$$g(t,Y^n,Z^n)\rightarrow g(t,Y,Z)\ \hb{ a.s. }dt\otimes d\dbP\q~\hb{and}\q~
\sup_n|g(t,Y^n,Z^{n})|\in L_{\dbF}^2(0,T;\dbR^d).$$
On the other hand, from the continuity properties of stochastic integral, we get that
\begin{align}
\ds  &\lim_{n\rightarrow\i}\sup_{0\les t\les T}\Big|\int_t^TZ^n_sdW_s-\int_t^TZ_sdW_s\Big|=0\q\hb{in probability,}\nn \\
\ns\ds  & \lim_{n\rightarrow\i}\sup_{0\les t\les T}\Big|\int_t^Tg(s,Y^n_s,Z^n_s)d\oaB_s-\int_t^Tg(s,Y_s,Z_s)d\oaB_s\Big|=0\q\hb{in probability.} \label{22.4.13.4}
\end{align}
Extracting a subsequence again if necessary, we may assume that the above convergence is $\dbP$-a.s.
Finally, we have
\begin{align*}
\ds |\Delta Y^{m,n}_t|\les&~ |\Delta Y^{m,n}_T|+\int_t^T|f^m(s,Y^m_s,Z^m_s)-f^n(s,Y^n_s,Z^n_s)|ds\\
\ns\ds  &+\Big|\int_t^T[g(s,Y^m_s,Z^m_s)-g(s,Y^n_s,Z^n_s)]d\oaB_s\Big|
 + \Big|\int_t^T\Delta Z^{m,n}_{s}dW_{s}\Big|.
\end{align*}
Now fix $n$, and taking limits on $m$ and supremum over $t\in[0,T]$, we obtain that for almost all $\o\in\Om$,
\begin{align*}
\ds \sup_{0\les t\les T}|\Delta Y^{n}_t|\les&\
|\Delta Y^{n}_T|+\int_t^T|f(s,Y_s,Z_s)-f^n(s,Y^n_s,Z^n_s)|ds\\
\ns\ds  &+\sup_{0\les t\les T}\Big|\int_t^T[g(s,Y_s,Z_s)-g(s,Y^n_s,Z^n_s)]d\oaB_s\Big|
 + \sup_{0\les t\les T}\Big|\int_t^T\Delta Z^{n}_{s}dW_{s}\Big|,
\end{align*}
from which we deduce that the sequence $\{Y^n;n\in\dbN\}$ converges to $Y$ uniformly for $t\in[0,T]$. In particular, $Y$ is continuous if the sequence $\{Y^n;n\in\dbN\}$ is.

\ms

Finally, we could pass to the limit in
\begin{align*}
 Y^n_t=Y^n_T+\int_t^Tf^n(s,Y^n_s,Z^n_s)ds+\int_t^Tg(s,Y^n_s,Z^n_s)d\oaB_s - \int_t^TZ^n_{s}dW_{s},\q~
 0\les t\les T,
\end{align*}
deducing that the pair $(Y,Z)$ is a solution of BDSDE with parameters $(f,g,\xi)$.
\end{proof}

\begin{remark}\rm
It should be point out that the solution here we obtain might not be unique, due to that the limit generator $f$ is of quadratic growth in $z$, but not satisfies a comparison theorem. A stability result of BDSDE will be given (see \autoref{Stability}) after the comparison theorem is proved in Section \ref{Sec4}.
\end{remark}

\subsection{Existence}

Based on the results of a priori estimate and the monotone stability, now we can prove the existence of solutions of one-dimensional BDSDEs \rf{BDSDE} with quadratic growth.

\begin{assumption}\label{H2}\rm
Suppose that the terminal value $\xi$ belongs to $L^\i_{\sF_T}(\Omega;\dbR)$, and  $c:\dbR^+\rightarrow\dbR^+$ is a continuous increasing function. Moreover, there exist some constants $a$, $b$, $C$, $d\in\dbR$ and $\a\in(0,1)$ such that for all $t\in[0,T],y,\bar y\in\dbR, z,\bar z\in\dbR^d$,
   \begin{align*}
     f(t,y,z)=a_0(t,y,z)y+f_0(t,y,z)
   \end{align*}
  with
   \begin{align*}
   \ds  &d\les a_0(t,y,z)\les a,\q~ |f_0(t,y,z)|\les b+c(|y|)|z|^2,\q~|g(t,y,z)|^2\les\a|z|^2,\q~\hb{a.s.}\\
\ns\ds  & |g(t,y_1,z_1)-g(t,y_2,z_2)|^2\les C|y_1-y_2|^2+\a|z_1-z_2|^2,\q~\hb{a.s.}
   \end{align*}
\end{assumption}

\begin{remark} \rm
Under \autoref{H2}, the coefficient $g$ is bounded with respect to $y$ and $z$. However, if the coefficient $g$ is independent of $y$, then it is of linear growth and Lipschitz continuous with respect to $z$.
In fact, \autoref{H2} implies that
\begin{equation*}\label{22.4.8.1}
  g(t,y,0)=0,\q~ \forall (t,y)\in[0,T]\ts\dbR.
\end{equation*}
Here we present two examples that satisfying \autoref{H2}:
\begin{align*}
\ds \hb{(i)}\q~  g(t,y,z)=&\ \a\sin(y)\cos(z),\q~ \forall (t,y,z)\in[0,T]\ts\dbR\ts\dbR^d;\\
\ns\ds \hb{(ii)}\q~ g(t,y,z)=&\ l(z),\q~ |l(z)|^2\les \a|z|^2,\q~ \forall (t,y,z)\in[0,T]\ts\dbR\ts\dbR^d.
\end{align*}
\end{remark}

The following theorem is the main result of this section.

\begin{theorem}[Existence]\label{existence}
Suppose that the parameters $(f,g,\xi)$ satisfy \autoref{H2}, then BDSDE \rf{BDSDE} has at least one solution $(Y,Z)\in L_{\dbF}^\i(0,T;\dbR)\times L_{\dbF}^2(0,T;\dbR^d)$, where $Y$ is continuous.
Moreover, there exists a maximal solution $(Y^*,Z^*)$ $($resp. a minimal solution  $(Y_*,Z_*))$
such that for any set of parameters  $(\hat{f},g,\hat{\xi})$, if
$$\hat{f}\les f\q\hb{and}\q\hat{\xi}\les\xi\q (\hb{resp. }\hat{f}\ges f\q\hb{and}\q\hat{\xi}\ges\xi),$$
one has
$$\hat Y\les Y^*\q (\hb{resp. }\hat Y\ges Y_*),$$
where $(\hat Y,\hat Z)$  is any solution of BDSDE with parameters $(\hat{f},g,\hat{\xi})$.
\end{theorem}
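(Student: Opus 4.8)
The plan is to exhibit $(Y,Z)$ as a monotone limit of solutions of \emph{globally Lipschitz} doubly stochastic equations, for which \autoref{PP-EQ} applies, and then to pass to the limit by the monotone stability \autoref{Mono-stable}, with the a priori estimate \autoref{PriEstimate} supplying the uniform bounds the stability result demands. First I would reduce to a constant quadratic coefficient: by the corollary to \autoref{PriEstimate}, any bounded solution of a BDSDE whose data obey \autoref{AssumpPri} satisfies $\|Y\|_\i\les M_0:=(\|\xi\|_\i+bT)\exp(a^+T)$, a bound involving only the linear data $a,b$ and \emph{not} the quadratic coefficient. Truncating the $y$-argument of $f$ at the level $M_0$ therefore replaces $c(|y|)$ by the constant $\g:=c(M_0)$ without altering the solution set in $L^\i_\dbF(0,T;\dbR)\ts L^2_\dbF(0,T;\dbR^d)$ or destroying \autoref{AssumpPri}; we may thus assume from the outset the uniform quadratic bound $|f_0(t,y,z)|\les b+\g|z|^2$ together with $d\les a_0\les a$.

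Next, using the continuity of $f$ in $(y,z)$, I would build for each $n$ a globally Lipschitz generator $f^n$ that (a) is monotone in $n$ — decreasing for the maximal solution, increasing for the minimal one — (b) converges to $f$ locally uniformly, and (c) retains, uniformly in $n$, the decomposition $f^n=a_0^n\,y+f_0^n$ with $d\les a_0^n\les a$ and $|f_0^n|\les b+\g|z|^2$. Such a family is produced by a truncation--regularization procedure in the spirit of Kobylanski \cite{Kobylanski-00}, e.g. an inf-/sup-convolution carried out on $\{|y|\les M_0\}$ after truncating the $z$-variable. Since each $f^n$ is globally Lipschitz and $(g,\xi)$ satisfies \autoref{Assump1}, \autoref{PP-EQ} yields a unique solution $(Y^n,Z^n)\in S^2_\dbF(0,T;\dbR)\ts L^2_\dbF(0,T;\dbR^d)$. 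The ordering of the $f^n$, together with the common $g$ and $\xi$, forces $\{Y^n\}$ to be monotone by the comparison theorem \autoref{Shi05}; and because each $f^n$ verifies \autoref{AssumpPri} with the \emph{same} $a,b,\g,\a$, \autoref{PriEstimate} gives the $n$-independent bounds $\|Y^n\|_\i\les M_0$ and $\dbE\int_0^T|Z^n_s|^2ds\les K$.

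These are precisely hypotheses (i)--(iii) of \autoref{Mono-stable}, which then furnishes $(Y,Z)\in L^\i_\dbF(0,T;\dbR)\ts L^2_\dbF(0,T;\dbR^d)$ with $Y^n\to Y$ uniformly, $Z^n\to Z$ in $L^2_\dbF(0,T;\dbR^d)$, solving the BDSDE with data $(f,g,\xi)$; since the $Y^n$ have continuous paths, so does $Y$, giving the asserted solution. Taking the decreasing approximation $f^n\ges f$ produces the maximal solution $(Y^*,Z^*)$: for any data $(\hat f,g,\hat\xi)$ with $\hat f\les f$ and $\hat\xi\les\xi$ and any solution $(\hat Y,\hat Z)$, one has $\hat f\les f^n$ and $\hat\xi\les\xi$, so \autoref{Shi05} gives $\hat Y\les Y^n$, whence $\hat Y\les Y^*$ after letting $n\to\i$; the choice $(\hat f,\hat\xi)=(f,\xi)$ shows $Y^*$ dominates every solution, i.e. it is maximal. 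The increasing approximation $f^n\les f$ yields the minimal solution symmetrically.

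The main obstacle is the construction in the second paragraph: in contrast with the linear-growth setting, the two-sided quadratic dependence on $z$ makes naive inf-/sup-convolution diverge, so the approximants must be engineered to be simultaneously Lipschitz, monotone, locally uniformly convergent, and uniformly of quadratic growth with the \emph{decomposition} of \autoref{AssumpPri} preserved — it is this last feature that secures the $n$-independent bound $M_0$ feeding \autoref{Mono-stable}. Since $\dbF$ is not a filtration and the backward It\^o integral is present, no BMO-martingale shortcut is available, so the uniform control of $(Y^n,Z^n)$ must rest entirely on \autoref{PriEstimate}.
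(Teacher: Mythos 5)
Your overall skeleton (monotone Lipschitz approximation $\to$ \autoref{PP-EQ} and \autoref{Shi05} $\to$ uniform bounds from \autoref{PriEstimate} $\to$ \autoref{Mono-stable}, then maximality by comparison with the approximants) coincides with the paper's, and your reduction to a constant quadratic coefficient by truncating the $y$-variable is exactly the paper's \emph{Step 3}. The gap is the construction you defer to your second paragraph, and it is not a fixable technicality: the family you postulate cannot exist. A decreasing sequence of globally Lipschitz generators $f^n$ converging pointwise to $f$ necessarily satisfies $f^n\ges f$ everywhere; but \autoref{H2} allows $f$ to have genuine upper quadratic growth in $z$ (already the generator $f=C|z|^2$ of \autoref{Exp1}), and on the region $|y|\les M_0$ your own decomposition forces $f^n(t,y,z)\les aM_0+b+L_n|z|$, which cannot dominate $C|z|^2$ for all $z\in\mathbb{R}^d$. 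Symmetrically, an increasing Lipschitz approximation (needed for the minimal solution) cannot exist when $f$ has genuine lower quadratic growth, and \autoref{H2} permits two-sided quadratic growth. Your proposed remedy --- inf-/sup-convolution \emph{after truncating the $z$-variable} --- does not repair this: once $z$ is truncated at level $R$, the sup-convolution dominates only the truncated generator, so either the approximants are no longer $\ges f$ (then \autoref{Shi05} no longer orders the $Y^n$ and hypothesis (iii) of \autoref{Mono-stable} fails), or the locally uniform limit is the $z$-truncated generator rather than $f$; moreover, for a generic $f$ the $z$-truncations are not monotone in the truncation level, so monotonicity cannot be restored by a diagonal choice. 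The same impossibility undermines your maximality argument, which compares $\hat Y$ against the (nonexistent) dominating Lipschitz generators.

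This is precisely why the paper performs the exponential transformation \emph{before} the approximation rather than omitting it. With $\beta=\frac{2C}{1-\alpha}$ and the structural condition $|g(t,y,z)|^2\les\alpha|z|^2$, the change of variables $u=e^{\beta Y}$, $v=\beta u Z$ makes the upper quadratic term cancel exactly: the transformed and $u$-truncated generator $\tilde f$ satisfies $\tilde f(t,u,v)\les l(u)$ with $l$ Lipschitz and bounded from above --- a one-sided bound in which $v$ no longer appears. It is only this one-sided boundedness that renders the sup-convolution \rf{22.4.9.1} finite and produces the decreasing, uniformly Lipschitz approximants $\tilde f^n$; the classical existence and comparison results and \autoref{Mono-stable} are then applied to the \emph{transformed} equation, the solution is carried back by the logarithmic change of variables, and \autoref{PriEstimate} shows that the $u$-truncation is inactive on the limit, so that the pair solves the original equation. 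The arbitrary competitor $(\hat Y,\hat Z)$ is likewise transported through the same exponential change so that the comparison with the $\tilde f^n$ takes place at the transformed level, which is how the paper obtains $\hat Y\les Y^*$. In short, the exponential transformation is not an optional preprocessing step but the mechanism that converts quadratic growth into the one-sided boundedness your approximation scheme silently requires; without it, the monotone Lipschitz family in your second paragraph does not exist and the proof cannot proceed.
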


\begin{proof}
The main idea here we used is that, first,
using an exponential change and a truncation argument in order to control the growth  of the generator $f$ in $z$ and $y$, respectively. Second, finding a good approximation of $f$ and constructing a sequence of uniformly Lipschitz continuous functions. Third, combine the previous result (\autoref{PriEstimate} and \autoref{Mono-stable}) to get the solution.

\ms

First, instead of \autoref{H2}, we assume that the coefficient $f$ satisfies the following condition:
There exist some constants $\tilde a,\tilde  d\in\dbR$ and $B,C\in\dbR^+$ such that for all $(t,y,z)\in[0,T]\times\dbR\times\dbR^d$,
   \begin{align}\label{3.0}
     f(t,y,z)=a_0(t,y,z)y+f_0(t,y,z)
   \end{align}
  with
   \begin{align}\label{3.0.1}
     \tilde  d\les a_0(t,y,z)\les\tilde  a,\q~ |f_0(t,y,z)|\les B+C|z|^2,\q~\hb{a.s.}
   \end{align}
Let $(F,g,\zeta)$ be a set of parameters of BDSDE such that
\begin{equation}\label{3.7}
  F\les f\q\hb{and}\q\zeta\les\xi,
\end{equation}
and suppose that it has a solution $(\tilde{Y},\tilde{Z})\in L_{\dbF}^\i(0,T;\dbR)\times L_{\dbF}^2(0,T;\dbR^d)$. The purpose is to find a solution $(Y,Z)\in L_{\dbF}^\i(0,T;\dbR)\times L_{\dbF}^2(0,T;\dbR^d)$ of BDSDE with parameters  $(f,g,\xi)$ such that
$$\tilde Y\les Y.$$
From \autoref{PriEstimate}, if we set
$$\tilde K=(\|\xi\|_\i+BT)\exp(\tilde  a^+T),$$
then any solution $(Y,Z)$ of BDSDE with parameters  $(f,g,\xi)$ has the following estimate,
$$\|Y\|_{\i}\les\tilde K.$$
Define
\bel{3.10}M=\max(\tilde K,\|\tilde Y\|_\i).\ee

\ms

$\emph{Step 1. The exponential transformation.}$

\ms

In order to make a presentation more clearly, we rewrite BDSDE \rf{BDSDE} as follows,
\begin{align}\label{3.8}
 Y_t=\xi+\int_t^Tf(s,Y_s,Z_s)ds+\int_t^Tg(s,Y_s,Z_s)d\oaB_s - \int_t^TZ_{s}dW_{s},\q~0\les t\les T.
\end{align}
Let $\b$ be a positive constant which will be determined later.
Applying the exponential change of variable (see \autoref{Itoformula}) to $u=\exp(\beta Y)$ transforms
formally BDSDE \rf{3.8} with parameters $(f,g,\xi)$ in a BDSDE with parameters $(\bar f, \bar g,\bar\xi)$ of the following,
\begin{align}\label{3.9}
 u_t=\bar\xi+\int_t^T\bar f(s,u_s,v_s)ds+\int_t^T\bar g(s,u_s,v_s)d\oaB_s - \int_t^Tv_{s}dW_{s},\q~0\les t\les T,
\end{align}
where
\begin{equation}\label{22.2.18.1}
\begin{aligned}
  &\bar f(t,u,v)=\b uf(t,\frac{\ln u}{\b},\frac{v}{\b u})
  +\frac{1}{2}\b^2u\Big[g(t,\frac{\ln u}{\b},\frac{v}{\b u})^2-\Big(\frac{v}{\b u}\Big)^2\Big],\\
  &\bar g(t,u,v)=\b ug(t,\frac{\ln u}{\b},\frac{v}{\b u}),\qq\bar\xi=\exp(\b \xi),\qq v=\b uZ.
\end{aligned}
\end{equation}
It is easy to check that $\bar g$ and $\bar\xi$ satisfy \autoref{H2}. Similarly, the exponential change $u=\exp(\beta \tilde Y)$ transforms formally the BDSDE with parameters $(F,g,\zeta)$ in a BDSDE with parameters
$(\bar F,\bar g,\bar \zeta)$ where
\begin{align*}
  \bar F(t,u,v)=\b uF(s,\frac{\ln u}{\b},\frac{v}{\b u})
  +\frac{1}{2}\b^2u\Big[g(s,\frac{\ln u}{\b},\frac{v}{\b u})^2-\Big(\frac{v}{\b u}\Big)^2\Big],\qq \bar\zeta=\exp(\b \zeta).
\end{align*}
We consider a function $\Phi:\dbR\rightarrow[0,1]$ such that
$$\Phi(u)=\left\{\ba{ll}
\ds 1,\qq \hb{if }u\in[\exp(-\b M),\exp(\b M)],\\
\ns\ds 0,\qq \hb{if }u\notin[\exp(-\b (M+1)),\exp(\b (M+1))],
\ea\right.$$
and we set that for $(t,u,v)\in[0,T]\times\dbR\times\dbR^d$,
$$\tilde f(t,u,v)=\Phi(u)\bar f(t,u,v),\q~ \tilde F(t,u,v)=\Phi(u)\bar F(t,u,v),
\q~ l(u)=\Phi(u)\big(\tilde au\ln u+B\b u).$$
Now, let
\bel{3.9}\b=\frac{2C}{1-\a},\ee
then, note that \rf{3.0.1}, we have
$$\Phi(u)\Big(\tilde d u\ln u-B\b u-(\frac{C}{\b}+\frac{1}{2})\frac{|v|^2}{u}\Big)\les
\tilde f(t,u,v)\les l(u).$$
We remark that $l$ is a Lipschitz continuous function bounded from above by a
constant, denoted by $L$. Moreover, note that if we define
$$\tilde y=\exp(\b\tilde Y)\q~\hb{and}\q~ \tilde z=\b\tilde Z\exp(\b\tilde Y),$$
then the pair $(\tilde y,\tilde z)$ is a solution of BDSDE with parameters $(\tilde F,\bar g,\bar \zeta)$.

\ms

$\emph{Step 2. The approximation.}$

\ms

We can approximate $\tilde f$ by a decreasing sequence of uniformly Lipschitz continuous functions
$\{\tilde f^n;n\in\dbN\}$ such that for all $(t,u,v)\in[0,T]\times\dbR\times\dbR^d$,
$$\tilde F(t,u,v)\les \tilde f(t,u,v)\les \tilde f^n(t,u,v)\les l(u)+\frac{1}{2^n}.$$
Indeed, such a sequence exists. For instance,
denote by $m$ an integral, and for each integral $n\ges m$, we would like to define the sequence of the functions $\{\tilde f^n;n\in\dbN\}$ by
\begin{equation}\label{22.4.9.1}
\tilde f_{n}(t, y, z)=\sup \left\{\tilde{f}(t, p, q)-n|p-y|-n|q-z|:(p, q) \in \mathbb{Q}^{1+d}\right\},
\end{equation}
where $\dbQ$ denotes the rational number.
It is easy to see that $\tilde f_{n}$ is well defined and it is globally Lipschitz continuous in $y$ and $z$ with the constant $n$. In addition, the sequence $\{\tilde f^n;n\ges m\}$ is decreasing and converges pointwise to the function $\tilde f$.

\ms

%
%
Then the classical results of the existence and uniqueness (see \autoref{PP-EQ}) and the comparison theorem (see \autoref{Shi05}) for Lipschitz continuous coefficients give us that, for every $n\in\dbN$, there exists a unique solution $(y^n,z^n)$
of the BDSDE with parameters $(\tilde f^n,\bar g,\bar\xi)$ such that
$$\tilde y\les y^{n+1}\les y^n\les y^1.$$
Next, we indicate that the sequence $\{y^n;n\in\dbN\}$ is bounded and positive. It should be point out that by the definition of $\bar g$ (see \rf{22.2.18.1}) and \autoref{H2}, it is certainly that
\begin{equation}\label{22.4.11.6}
|\bar g(t,u,v)|^2\les \a|v|^2,\q~ \forall (t,u,v)\in[0,T]\times\dbR\times\dbR^d,
\end{equation}
which implies that $\bar g(t,u,v)=0$ when $v=0$. Hence, again, \autoref{PP-EQ} implies that, on the one hand, the pair of processes $\big\{\big(\exp(\b M), 0\big);t\in[0, T]\big\}$ is the unique solution of BDSDE with parameters $\big(0,\bar g,\exp(\b M)\big)$; on the other hand, the pair of processes $\big\{\big(\exp(-\b M), 0\big);t\in[0, T]\big\}$ is the unique solution of BDSDE with parameters $\big(0,\bar g,\exp(-\b M)\big)$.
Moreover, since for $n\in\dbN$ large enough,
$$
\exp(\b \xi)\les \exp(\b M) \quad \text { and } \quad \tilde{f}^{n}\big(t,\exp(\b M), 0\big)\les 0,\q~ t\in[0,T],
$$
and for all $n$,
$$
\exp(-\b M) \les \exp(\b \xi) \quad \text { and } \quad 0 \les \tilde{f}^{n}\big(t,\exp(-\b M), 0\big),\q~ t\in[0,T],
$$
then \autoref{Shi05} gives us that for $n\in\dbN$ large enough,
\begin{equation}\label{22.4.11.3}
\exp(-\b M) \les y^{n+1}_t\les y^n_t\les \exp(\b M),\q~ \hb{a.s.}\ t\in[0,T].
\end{equation}

Now, we come back to the previous problem by denoting
\begin{align*}
\ds  \tilde G^n(t,y,z)&=\frac{\tilde f^n(t,e^{\b y},\b e^{\b y}z)}{\b e^{\b y}}+\frac{\b}{2}\big[z^2-g(t,y,z)^2\big],\\
\ns\ds  \tilde G(t,y,z)&=\frac{\tilde f(t,e^{\b y},\b e^{\b y}z)}{\b e^{\b y}}+\frac{\b}{2}\big[z^2-g(t,y,z)^2\big]\\
&=\Phi(e^{\b y})f(t,y,z)+(1-\Phi(e^{\b y}))\frac{\b}{2}\big[z^2-g(t,y,z)^2\big].
%
\end{align*}
Notice that $\b$ is defined in \rf{3.9}. Then, the pair $(Y^n,Z^n)$ defined by
$$Y^n_t=\frac{\ln y^n_t}{\b}\q~\hb{and}\q~Z^n_t=\frac{z^n_t}{\b y^n_t}$$
is a solution of the BDSDE with parameters $(\tilde G^n,g,\xi)$. Let us recapitulate what we have obtained.
\begin{itemize}
  \item [(i)] There exist two positive constants $K$ and $C$ such that for all $(t,y,z)\in[0,T]\ts\dbR\ts\dbR^d$,
   $$|\tilde G^n(t,y,z)|\les K+C|z|^2,\q n\in\dbN.$$
  \item [(ii)] The sequence $\{\tilde G^n; n\in\dbN\}$ converges to $\tilde G$  locally uniformly on $[0,T]\ts\dbR\ts\dbR^d$.
  \item [(iii)] For each $n\in\dbN$, the BDSDE with parameters $(\tilde G^n,g,\xi)$ has a solution
   $$(Y^n,Z^n)\in L_{\dbF}^\i(0,T;\dbR)\times L_{\dbF}^2(0,T;\dbR^d),$$
  such that the sequence $\{Y^n;n\in\dbN\}$ is decreasing, and there exists a constant $\bar K>0$ such that for every $n\in\dbN$, we have that $\|Y^n\|_\i\les\bar K$.
  \item [(iv)] For each $n\in\dbN$, $\tilde Y\les Y^n$.
\end{itemize}
Therefore, applying \autoref{Mono-stable}, we have that there exists a pair of processes
$(Y,Z)\in L_{\dbF}^\i(0,T;\dbR)\times L_{\dbF}^2(0,T;\dbR^d)$ such that, the sequence $\{Y^n;n\in\dbN\}$  converges uniformly to $Y$,  the sequence $\{Z^n;n\in\dbN\}$ converges to $Z$ in $L_{\dbF}^2(0,T;\dbR^d)$, and $(Y,Z)$ is a solution of BDSDE with parameters $(\tilde G,g,\xi)$, i.e.,
\begin{align*}
 Y_t=\xi+\int_t^T\tilde G(s,Y_s,Z_s)ds+\int_t^Tg(s,Y_s,Z_s)d\oaB_s - \int_t^TZ_{s}dW_{s},\q~0\les t\les T.
\end{align*}
Finally, we declare that
$$\|Y\|_\i\les M.$$
In fact, it follows from \autoref{PriEstimate} as
\begin{align*}
     \tilde G(t,y,z)=\tilde a_0(t,y,z)y+\tilde f_0(t,y,z)
\end{align*}
with
\begin{align*}
     \tilde a_0(t,y,z)=\Phi(e^{\b y})a_0(t,y,z)\les a^+,
\end{align*}
and
\begin{align*}
     \tilde f_0(t,y,z)=\Phi(e^{\b y})f_0(t,y,z)+(1-\Phi(e^{\b y}))\frac{\b}{2}\big[z^2-g(t,y,z)^2\big],
\end{align*}
where the functions $a_0$ and $f_0$ come from \rf{3.0}. Then, notice that \rf{3.9}, we have
\begin{align*}
     |\tilde f_0(t,y,z)|\les B+\frac{3C}{1-\a}|z|^2.
\end{align*}
So, by \autoref{PriEstimate}, we obtain
$$\|Y\|_\i\les (\|\xi\|_\i+BT)\exp(a^+T)=\tilde K\les M.$$
Hence the pair $(Y,Z)$ is also a solution of the BDSDE with parameters $(f,g,\xi)$.
In addition,
$$\tilde Y\les Y\q~\hb{and}\q~\|Y\|_\i\les M.$$

\ms

$\emph{Step 3. The truncation.}$

\ms

Finally, we suppose that the generator $f$ satisfies \autoref{H2}. Let $(F,g,\zeta)$ be a set of parameters of BDSDE such that the condition \rf{3.7} hold true and denote by $(\tilde{Y},\tilde{Z})\in L_{\dbF}^\i(0,T;\dbR)\times L_{\dbF}^2(0,T;\dbR^d)$ a solution of BDSDE with the parameters $(F,g,\zeta)$. For all $(t,y,z)\in[0,T]\ts\dbR\ts\dbR^d$, we set
$$
  \check{f}(t,y,z)=a_0(t,y,z)y+f_0(t,\phi_M(y)y,z)\q~\hb{and}\q~ \check{F}=F(t,\phi_M(y)y,z),
$$
where $M$ is defined in \rf{3.10}, and
$$\phi_n(u)=\left\{\ba{ll}
\ds 1,\qq \hb{if }|u|\les n,\\
\ns\ds 0,\qq \hb{if }|u|\ges n+1.
\ea\right.$$
It is easy to check that the generator $\check f$ satisfies the condition \rf{3.0.1}, and $(\tilde{Y},\tilde{Z})$ is also a solution of the BDSDE with parameters $(\check F,g,\zeta)$. Then, by {\it Step 1} and {\it Step 2}, we deduce that  BDSDE with parameters $(\check f, g,\xi)$  has  a solution $(Y,Z)$ such that
$$\tilde Y\les Y\q~\hb{and}\q~\|Y\|_\i\les M,$$
which implies that the pair of process $(Y,Z)$ is also a solution of BDSDE with parameters $(f, g,\xi)$. then the existence of a maximal solution is proved.

\ms

Similarly, by using the same argument but with the change of variable $u=\exp(-\b y)$, one can prove the existence of a minimal solution. This completes the proof.
\end{proof}

\begin{remark} \rm
Some ideas for proving \autoref{existence} are borrowed from Kobylanski \cite{Kobylanski-00}, however, due to the emergence of the backward It\^o's integral appeared in BDSDE \rf{BDSDE}, some new ideas and technics have been introduced and used. For example, the main idea of Kobylanski \cite{Kobylanski-00} is to take the exponential change to transform a quadratic BDSDE into a classical BDSDE essentially. While, this is not enough for us, since the presence of the backward It\^o's integral. In order to overcome the troubles, we introduce the idea of the comparison theorem of classical BDSDEs, which is evident from \autoref{Exp1}.
\end{remark}


\section{Comparison Theorem}\label{Sec4}
As we all know that the one-dimensional circumstance allows us to establish a comparison theorem for the solutions of BDSDEs, which implies the uniqueness of solutions of BDSDEs as a by-product. So in this section, we would like to prove a comparison theorem for the solutions of BDSDEs with quadratic growth. For $i=1, 2$, we consider the following equation:
\begin{equation}\label{21.8.3.1}
Y_{t}^{i}=\xi^{i}+\int_{t}^{T} f^{i}\left(s,Y_s^i,Z_{s}^{i}\right) d s
+\int_{t}^{T} g\left(s,Y_s^i,Z_{s}^{i}\right) d \oaB_{s}
-\int_{t}^{T} Z_{s}^{i} d W_{s},\q~0\les t\les T.
\end{equation}
Then, under \autoref{H2}, there exists a pair of measurable processes $(Y^i,Z^i)\in L_{\dbF}^\i(0,T;\dbR)\times L_{\dbF}^2(0,T;\dbR^d)$ that satisfies the corresponding BDSDE \rf{21.8.3.1}, where $i=1,2$. Moreover, \autoref{PriEstimate} implies that there exists a positive constant $K$ such that
\begin{equation*}
\|Y^i\|_\i+\dbE\int_0^t|Z^i_s|^2ds\les K,\q ~ \forall t\in[0,T].
\end{equation*}
In order to prove the comparison theorem, we assume that the terminal value $\xi$ is bounded, and the generator $f$ is locally Lipschitz continuous and is of quadratic growth in $Z$ in a strong sense.
\begin{assumption}\label{H3}
There exist three functions $l$, $l_\e$, $k:[0,T]\rightarrow\dbR$ and three positive constants $C$, $\e$, and $\a\in(0,1)$ such that for all $t\in[0,T],\ y\in[-M,M]$ and $z\in\dbR^d$, the coefficients $f$ and $g$ satisfy the following conditions:
\begin{equation}\label{21.12.24.7}
\begin{aligned}
\ds |f(t, y, z)| & \les l(t)+C|z|^{2}, \qq\ |g(t,y,z)|^2\les \a|z|^2,
 \q~ \text { a.s., } \\
\ns\ds \frac{\partial f}{\partial y}(t,y,z)&\les l_{\varepsilon}(t)+\varepsilon|z|^{2},
\q~ \Big|\frac{\partial g}{\partial y}(t,y,z)\Big|^2\les C,
\q~\text{a.s.}, \\
\ns\ds \left|\frac{\partial f}{\partial z}(t, y, z)\right| & \les k(t)+C|z|,
\q \ \ \Big|\frac{\partial g}{\partial z}(t, y, z)\Big|^2  \les \a.
 \q~ \text { a.s. }
\end{aligned}
\end{equation}
\end{assumption}

Similar to Kobylanski \cite{Kobylanski-00}, following the method used by Barles--Murat \cite{Barles-Murat-95} for PDEs, we would like to divide the proof of the comparison theorem into two steps. First, we consider the comparison theorem under the condition that the generator $f$ satisfies a structure condition (STR).
Then, we show the proof by giving a change of variable that transforms a BDSDE with the generator $f$ satisfying \autoref{H3} into a BDSDE with the generator $\widetilde f$ satisfying this structure condition (STR).


\begin{definition}\rm
We say that a generator $f$ satisfies the structure condition (STR), if there exists a constant $a>0$ and a function $b: [0,T] \rightarrow \mathbb{R}$  such that
\begin{equation}\label{STR}
\frac{\partial f}{\partial y}(t, y, z)+a\left|\frac{\partial f}{\partial z}\right|^{2}(t, y, z) \les b(t),\q~ (t, y, z) \in [0,T] \times \mathbb{R} \times \mathbb{R}^{d}.
\end{equation}
\end{definition}

We have the following comparison theorem under the structure condition (STR).
\begin{proposition}\label{21.8.3.3}
Assume that both $\xi^1$ and $\xi^2$ are bounded, the coefficient $g$ satisfies \autoref{H3}, and either $f^1$ or $f^2$ satisfies the structure condition (STR) with the constant $a>0$ and the function $b\in L^1([0,T];\dbR)$.
For $i=1,2$, denote by $(Y^i,Z^i)$ a  solution of BDSDE \rf{21.8.3.1} with parameters $(f^i,g,\xi^i)$, respectively.
Moreover, suppose that for all $(t,y,z)\in[0,T]\times\dbR\times\dbR^d$,
\begin{equation}\label{21.12.24.6}
 \xi^1\les \xi^2,\q~ f^1(t,y,z)\les f^2(t,y,z),\q~ \text{a.s.},
\end{equation}
then for any $t\in[0,T]$, we have
\begin{align*}
   Y_t^1\les Y_t^2,\q \hb{a.s.}
\end{align*}
%
%
\end{proposition}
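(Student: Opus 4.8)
The plan is to reduce the claim to $\D Y_t\deq Y^1_t-Y^2_t\les0$ a.s.\ and to run a linearisation-plus-It\^o argument on a carefully chosen convex test function, invoking the structure condition (STR) exactly where the BMO machinery of the classical quadratic theory is unavailable. Assume without loss of generality that $f^2$ satisfies (STR); the case of $f^1$ is symmetric (one linearises $f^1$ and uses $f^1(s,Y^2,Z^2)-f^2(s,Y^2,Z^2)\les0$). Writing $\D Z\deq Z^1-Z^2$, the pair $(\D Y,\D Z)$ solves a BDSDE with terminal value $\xi^1-\xi^2\les0$ and driver-difference $f^1(s,Y^1,Z^1)-f^2(s,Y^2,Z^2)$, which I split as $[f^1(s,Y^1,Z^1)-f^2(s,Y^1,Z^1)]+[f^2(s,Y^1,Z^1)-f^2(s,Y^2,Z^2)]$. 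The first bracket is $\les0$ by the pointwise hypothesis $f^1\les f^2$; for the second I use a \emph{diagonal} linearisation along the segment joining $(Y^2,Z^2)$ to $(Y^1,Z^1)$, i.e.\ $f^2(s,Y^1,Z^1)-f^2(s,Y^2,Z^2)=U_s\D Y_s+\langle V_s,\D Z_s\rangle$ with $U_s=\int_0^1\partial_yf^2(s,Y^2+\l\D Y,Z^2+\l\D Z)d\l$ and $V_s$ the analogous integral of $\partial_zf^2$. Using the \emph{same} segment for both derivatives is essential: integrating (STR) along it and applying Jensen to $V_s$ yields the pointwise bound $U_s+a|V_s|^2\les b_s$. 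Likewise I linearise $g$ as $g(s,Y^1,Z^1)-g(s,Y^2,Z^2)=P_s\D Y_s+Q_s\D Z_s$, where \autoref{H3} gives $|P_s|^2\les C$ and $|Q_s\D Z_s|^2\les\a|\D Z_s|^2$.

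Next I would apply the It\^o formula for BDSDEs (\autoref{Itoformula}) to $\Phi(\D Y_s)$ on $[t,T]$, taking $\Phi(x)=(x^+)^{\k}$ for a large exponent $\k>2$ to be fixed. Since $\Phi(\D Y_T)=0$ and $\Phi$ vanishes on $(-\i,0]$, rearranging gives
$$\Phi(\D Y_t)=\int_t^T\Phi'(\D Y_s)\big[\hbox{neg}_s+U_s\D Y_s+\langle V_s,\D Z_s\rangle\big]ds+\tfrac12\int_t^T\Phi''(\D Y_s)|P_s\D Y_s+Q_s\D Z_s|^2ds-\tfrac12\int_t^T\Phi''(\D Y_s)|\D Z_s|^2ds+\hbox{(mart.)},$$
where $\hbox{neg}_s\les0$ and the two quadratic-variation terms carry opposite signs: the backward $d\oaB$-integral contributes $+\tfrac12\Phi''|P\D Y+Q\D Z|^2$ and the forward $dW$-integral contributes $-\tfrac12\Phi''|\D Z|^2$. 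Because $|Q_s\D Z_s|^2\les\a|\D Z_s|^2$ with $\a<1$, these combine into a strictly negative net coefficient of order $-(1-\a)$ on $|\D Z_s|^2$, which is the ``budget'' to be spent when completing the square.

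On $\{\D Y_s>0\}$ I would collect the $\D Z_s$-dependence into a single quadratic form and complete the square against this negative coefficient; the cross term with $\Phi'V_s+\Phi''\D Y_sQ_s^\top P_s$ is absorbed, leaving a coefficient of the type $\Phi'U_s\D Y_s+\frac{(\Phi')^2|V_s|^2}{2(1-\a)\Phi''}$ together with bounded lower-order terms. For $\Phi=(x^+)^\k$ one has $\Phi'\D Y=\k\Phi$ and $(\Phi')^2/\Phi''=\frac{\k}{\k-1}\Phi$, so the genuinely dangerous contribution is $\big[\k U_s+\frac{\k}{2(1-\a)(\k-1)}|V_s|^2\big]\Phi(\D Y_s)$; since \autoref{H3} only bounds $|V_s|$ linearly in $|Z|$, this term is quadratic in $Z$ and \emph{cannot} be controlled by the $L^2$-integrability of $Z$ alone. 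This is where (STR) enters: substituting $U_s\les b_s-a|V_s|^2$ turns the bracket into $\k b_s+\k\big[\frac{1}{2(1-\a)(\k-1)}-a\big]|V_s|^2$, and choosing $\k$ large (for any fixed $a>0$, $\a\in(0,1)$) renders the $|V_s|^2$-coefficient negative, so it is discarded together with the residual cross terms. What survives is $\hbox{integrand}\les\Lambda_s\Phi(\D Y_s)$ with $\Lambda_s=\k b_s+\hbox{const}\in L^1([0,T];\dbR)$. Taking expectations annihilates both stochastic integrals — their integrands lie in $L^2_\dbF$ because $\D Y$ is bounded and $\D Z\in L^2_\dbF$ — and Gronwall's inequality gives $\dbE\Phi(\D Y_t)=0$, hence $\D Y_t\les0$ a.s.\ for every $t$.

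The main obstacle is precisely the quadratic-in-$Z$ term generated by $\partial_zf^2$: in the classical quadratic theory it is tamed by BMO estimates on $Z$, but, as emphasised in the introduction, $\dbF$ is not a filtration and BMO is unavailable, so (STR) must serve as the substitute. Two points then require care. First, (STR) is pointwise in $(y,z)$, so it can be exploited only after the linearisation coefficients $U_s$ and $V_s$ have been aligned along a common segment; the diagonal linearisation above is what legitimises $U_s+a|V_s|^2\les b_s$, whereas a naive split-linearisation evaluates $\partial_yf^2$ and $\partial_zf^2$ at different points and destroys this inequality. Second, the exponent $\k$ must be tuned against both $a$ and the contraction constant $\a$ of $g$: the plain choice $\Phi(x)=(x^+)^2$ only works when $a\ges\frac{1}{2(1-\a)}$, while a large $\k$ accommodates an arbitrary $a>0$. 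A minor additional check is that the expectation of the backward It\^o integral vanishes in this non-filtration setting, which follows from the $L^2_\dbF$-integrability of its integrand and the structure of \autoref{Itoformula}.
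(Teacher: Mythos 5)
Your proposal is correct and follows essentially the same route as the paper's proof: the same split of the driver difference using $f^1\les f^2$, the same diagonal linearisation of $f^2$ and $g$ along the common segment $(s,\l Y^1+(1-\l)Y^2,\l Z^1+(1-\l)Z^2)$, It\^o's formula applied to $(\widehat Y^+)^p$ with a large exponent, the structure condition (STR) used to absorb the quadratic-in-$Z$ term after completing the square, and expectations plus Gronwall to conclude. The only cosmetic difference is that you form $U_s,V_s$ first and invoke Jensen, while the paper keeps the $\l$-integral and applies Young's inequality inside it — these are equivalent, and your choice of exponent condition matches the paper's requirement $(p-1)(1-\a)\ges 1/a$ up to constants.
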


\begin{remark} \rm
(i) It should be point out that in the condition (STR), when $a>1/(p-1)(1-\a)$ and the function $b$ is bounded, \autoref{21.8.3.3} holds true in the space $L_{\dbF}^p(0,T;\dbR)\times L_{\dbF}^2(0,T;\dbR^d)$.

\ms

(ii) \autoref{21.8.3.3} still holds true if $f^{1}\left(t, Y_{t}^{2}, Z_{t}^{2}\right) \les f^{2}\left(t, Y_{t}^{2}, Z_{t}^{2}\right)$ a.s. for all $t\in[0,T]$ and the generator $f^{1}$ satisfies the condition (STR), or if either $f^{1}\left(t, Y_{t}^{1}, Z_{t}^{1}\right) \les f^{2}\left(t, Y_{t}^{1}, Z_{t}^{1}\right)$ a.s. for all $t\in[0,T]$ and the generator $f^{2}$ satisfies the condition (STR).  \rm
\end{remark} \rm

\begin{proof}[Proof of \autoref{21.8.3.3}]
For simplicity of presentation, we denote that for $t\in[0,T]$,
$$
\widehat{Y}_t= Y^1_t-Y^2_t\q~\hb{and}\q~ \widehat{Z}_t=Z^1_t-Z^2_t,$$
and for any $s\in[t,T]$,
$$\widehat f(s)= f^1(s,Y^1_s,Z^1_s)-f^2(s,Y^2_s,Z^2_s)\q~\hb{and}\q~
\widehat g(s)=g(s,Y^1_s,Z^1_s)-g(s,Y^2_s,Z^2_s).$$
Then, note that $\widehat{Y}^+=\max\{0,\widehat{Y}\}$,
%
using It\^{o}'s formula (see \autoref{Itoformula}) to $\big(\widehat{Y}^+\big)^p$ for $p\in\dbN$ with $p>2$ implies that
\begin{equation}\label{21.8.3.0}
\begin{aligned}
\ds  (\widehat{Y}_{t}^{+})^{p}
=&\ p\int_{t}^{T} (\widehat{Y}^{+}_s)^{p-1}\widehat  f(s)ds
+p\int_{t}^{T} (\widehat{Y}^{+}_s)^{p-1}\widehat  g(s)d\oaB_s
-p\int_{t}^{T} (\widehat{Y}^{+}_s)^{p-1}\widehat Z_sdW_s\\
\ns\ds &
+\frac{p(p-1)}{2}\int_t^T(\widehat{Y}^{+}_s)^{p-2}|\widehat  g(s)|^2ds
-\frac{p(p-1)}{2}\int_t^T(\widehat{Y}^{+}_s)^{p-2}|\widehat Z_s|^2ds.
\end{aligned}
\end{equation}
For the term $\widehat f$ in the above, note \rf{21.12.24.6}, we have that
$$\begin{aligned}
\ds \widehat f(s)=&~f^1(s,Y^1_s,Z^1_s)-f^2(s,Y^2_s,Z^2_s)\\
\ns\ds =&~f^1(s,Y^1_s,Z^1_s)-f^2(s,Y^1_s,Z^1_s)+f^2(s,Y^1_s,Z^1_s)- f^2(s,Y^2_s,Z^2_s)\\
\ns\ds \les&~f^2(s,Y^1_s,Z^1_s)- f^2(s,Y^2_s,Z^2_s)\\
\ns\ds \les&~ \Big(\int_0^1\frac{\partial f^2}{\partial y}(*)d\l\Big) \widehat{Y}_s
+\Big(\int_0^1\frac{\partial f^2}{\partial z}(*)d\l\Big) \widehat{Z}_s,
\end{aligned}$$
where
$$(*)=(s,\l Y^1_s+(1-\l)Y^2_s,\l Z^1_s+(1-\l)Z^2_s).$$
Then, for the first term in the right hand side of \rf{21.8.3.0}, we have that
\begin{equation}\label{21.12.24.2}\begin{aligned}
\ds (\widehat{Y}^{+}_s)^{p-1}\widehat  f(s)
\les (\widehat{Y}^{+}_s)^{p}\int_0^1\bigg(\frac{\partial f^2}{\partial y}
+a\left|\frac{\partial f^2}{\partial z}\right|^2\bigg)(*)d\l
+\frac{1}{4a}(\widehat{Y}^{+}_s)^{p-2}| \widehat{Z}_s|^2{\bf 1}_{\{\widehat{Y}^{+}\ges0\}},
\end{aligned}\end{equation}
where we have used the well-known inequality $2\beta\g\les |\b|^2+|\g|^2$ with
$$
\begin{aligned}
\ds &\b=\sqrt{2 a} \frac{\partial f^{2}}{\partial z}(*)\left(Y_{s}^{+}\right)^{p / 2}, \\
\ns\ds &\g=\frac{1}{\sqrt{2 a}}\left(Y_{s}^{+}\right)^{(p-2) / 2} Z_{s} \mathbf{1}_{\left\{Y^{+} \ges 0\right\}}.
\end{aligned}
$$
For the term $\widehat{g}$ in \rf{21.8.3.0}, note \autoref{H3} satisfied by the coefficient $g$, we have that
$$\begin{aligned}
\ds |\widehat g(s)|^2=&~ \bigg\{\Big(\int_0^1\frac{\partial g}{\partial y}(*)d\l\Big) \widehat{Y}_s
+\Big(\int_0^1\frac{\partial g}{\partial z}(*)d\l\Big) \widehat{Z}_s\bigg\}^2\\
\ns\ds \les &~ \Big(\int_0^1\frac{\partial g}{\partial y}(*)d\l\Big)^2 |\widehat{Y}_s|^2
+2\frac{\sqrt{\e}}{\sqrt{\e}}\Big(\int_0^1\frac{\partial g}{\partial y}(*)d\l\Big)\Big(\int_0^1\frac{\partial g}{\partial z}(*)d\l\Big) |\widehat{Y}_s\widehat{Z}_s|
+\Big(\int_0^1\frac{\partial g}{\partial z}(*)d\l\Big)^2 |\widehat{Z}_s|^2\\
\ns\ds \les&~ \widetilde{C}|\widehat{Y}_s|^2+(\a+\e) |\widehat{Z}_s|^2,
\end{aligned}$$
where
$$\widetilde{C}=C+\frac{C\a}{\e}\q~\hb{and}\q~ \e=\frac{1-\a}{2}.$$
So, for the fourth term on the right hand side of \rf{21.8.3.0}, we have that
\begin{equation}\label{21.12.24.3}
\begin{aligned}
(\widehat{Y}^{+}_s)^{p-2}|\widehat  g(s)|^2
\les \widetilde{C}(\widehat{Y}^{+}_s)^{p}
+\frac{1+\a}{2}(\widehat{Y}^{+}_s)^{p-2}| \widehat{Z}_s|^2{\bf 1}_{\{\widehat{Y}^{+}\ges0\}}.
\end{aligned}
\end{equation}
Combining \rf{21.8.3.0}-\rf{21.12.24.3} and (STR), one has
\begin{equation}\label{21.12.24.4}
\begin{aligned}
\ds  &(\widehat{Y}_{t}^{+})^{p}+\frac{p}{4}\left((p-1)(1-\a)-\frac{1}{a}\right)
\int_t^T(\widehat{Y}^{+}_s)^{p-2}|\widehat Z_s|^2{\bf 1}_{\{\widehat{Y}^{+}\ges0\}}ds\\
\ns\ds \les&\ p\int_{t}^{T} \big[b(s)+\frac{1}{2}(p-1)\widetilde{C}\big](\widehat{Y}^{+}_s)^{p}ds
+p\int_{t}^{T} (\widehat{Y}^{+}_s)^{p-1}\widehat  g(s)d\oaB_s
-p\int_{t}^{T} (\widehat{Y}^{+}_s)^{p-1}\widehat Z_sdW_s,
\end{aligned}
\end{equation}
where the function $b$ comes from \rf{STR}.
Note that $\widehat Y$ is bounded, which implies that $(\widehat{Y}^{+}_s)^{p-1}\widehat  g(s)$ and $(\widehat{Y}^{+}_s)^{p-1}\widehat Z_s$ belong to the space $L^2_{\dbF}(0,T;\dbR^d)$. Take the expectation on both side of \rf{21.12.24.4},
\begin{equation*}
\begin{aligned}
\ds &\dbE(\widehat{Y}_{t}^{+})^{p}+\frac{p}{4}\left((p-1)(1-\a)-\frac{1}{a}\right)
\dbE\int_t^T(\widehat{Y}^{+}_s)^{p-2}|\widehat Z_s|^2{\bf 1}_{\{\widehat{Y}^{+}\ges0\}}ds\\
\ns\ds\les&\ p\int_{t}^{T} \big[b(s)+\frac{1}{2}(p-1)\widetilde{C}\big](\widehat{Y}^{+}_s)^{p}ds.
\end{aligned}
\end{equation*}
It is easy to choose a large enough $p$ such that
$$\frac{p}{4}\left((p-1)(1-\a)-\frac{1}{a}\right)\ges0.$$
Finally, Gronwall's inequality deduce that
\begin{equation*}
\dbE(\widehat{Y}_{t}^{+})^{p}\les0,\q~ \forall t\in[0,T].
\end{equation*}
Therefore, for all $t\in[0,T]$ we have that
$$Y^1_t\les Y^2_t,\q~ a.s.$$
This completes the proof.
\end{proof}


Based on the first step concerning the structure condition (STR), next we are going to prove the comparison theorem under \autoref{H3}.



\begin{theorem}[Comparison theorem]\label{21.12.24.8}
Let $(\xi^1,f^1,g)$ and $(\xi^2,f^2,g)$ be two parameters of BDSDE \rf{21.8.3.1} with bounded terminal values, and suppose that
\begin{itemize}
  \item [{\rm (i)}]  $\xi^1\les \xi^2$, a.s., and $ f^1(t,y,z)\les f^2(t,y,z)$, a.s., for all
  $(t,y,z)\in[0,T]\times[-M,M]\times\dbR^d.$
  \item [{\rm (ii)}] Either $(f^1,g)$ or $(f^2,g)$ satisfies \autoref{H3}.
\end{itemize}
Then if $(Y^1,Z^1)$ and $(Y^2,Z^2)$ are the associated solutions of BDSDE \rf{21.8.3.1} with parameters $(\xi^1,f^1,g)$ and $(\xi^2,f^2,g)$, respectively, one has that for any $t\in[0,T]$,
\begin{align*}
   Y_t^1\les Y_t^2,\q~ a.s.
\end{align*}
\end{theorem}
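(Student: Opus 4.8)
The plan is to reduce the general comparison result under \autoref{H3} to the already-established comparison result under the structure condition (STR) in \autoref{21.8.3.3}, by means of an explicit change of the unknown variable. This is the two-step strategy announced before the statement: \autoref{21.8.3.3} handles the case where the generator satisfies (STR), so the remaining task is to produce, from a generator $f$ satisfying only \autoref{H3}, an equivalent BDSDE whose generator \emph{does} satisfy (STR). Without loss of generality I would assume $(f^2,g)$ satisfies \autoref{H3}; the other case is symmetric.

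First I would introduce a change of variable on the $Y$-component. Motivated by Kobylanski's device for quadratic BSDEs, I would set $\widetilde Y^i_t = \psi(Y^i_t)$ for a suitable $C^2$, strictly increasing function $\psi$ (the natural candidate being of exponential type, $\psi(y)=\frac{1}{\lambda}(e^{\lambda y}-1)$ or similar, with $\lambda$ chosen large relative to $C$, $\e$, and the bound $M$). Applying It\^o's formula for BDSDEs (see \autoref{Itoformula}) to $\psi(Y^i)$, I would read off the new generator $\widetilde f^i$ and the new diffusion coefficient $\widetilde g$; the crucial point is that the It\^o correction terms coming from the $dW$ and $d\oaB$ martingale parts contribute a quadratic-in-$Z$ term that, after the transformation, can be arranged to dominate the $a|\partial_z f|^2$ penalty in (STR). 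Because $Y^i$ is bounded by $M$ and $\psi$ together with $\psi^{-1}$ and their derivatives are bounded and bounded away from zero on $[-M,M]$, the transformation is a genuine equivalence: $\widetilde Y^1_t\les \widetilde Y^2_t$ for all $t$ if and only if $Y^1_t\les Y^2_t$ for all $t$, since $\psi$ is increasing.

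Next I would verify that the transformed data satisfy the hypotheses of \autoref{21.8.3.3}. The monotonicity of $\psi$ preserves the ordering of terminal values, $\widetilde\xi^1\les\widetilde\xi^2$, and the pointwise ordering $\widetilde f^1\les\widetilde f^2$ on the relevant range (using $\psi'>0$ and the correction terms being common to both equations). The new coefficient $\widetilde g$ must still satisfy the $g$-part of \autoref{H3}, which follows from the chain rule together with the bounds $|\partial_y g|^2\les C$ and $|\partial_z g|^2\les\a$ and the boundedness of $\psi',\psi''$ on $[-M,M]$. The decisive computation is to check that $\widetilde f^2$ satisfies (STR): differentiating $\widetilde f^2$ in $y$ and $z$ and collecting terms, the bound $\partial_y f^2\les l_\e+\e|z|^2$ and $|\partial_z f^2|\les k+C|z|$ from \autoref{H3} must combine with the favorable sign of the $\psi''$-generated quadratic term so that $\partial_y\widetilde f^2+a|\partial_z\widetilde f^2|^2\les b(t)$ for some $a>0$ and $b\in L^1$. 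Choosing $\lambda$ large enough is exactly what turns the unfavorable $\e|z|^2$ and $C|z|$ growth into an $L^1$-bounded quantity after the substitution.

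The main obstacle I anticipate is this last verification: one must track precisely how the quadratic growth of $f^2$ in $z$, the linear growth of $\partial_z f^2$, and the correction terms from both the forward ($dW$) and backward ($d\oaB$) It\^o integrals interact, and confirm that with the constraint $|\partial_z g|^2\les\a<1$ the net coefficient of $|z|^2$ in $\partial_y\widetilde f^2+a|\partial_z\widetilde f^2|^2$ can be made nonpositive (or absorbed into an $L^1$ function) by a single choice of $\lambda$ and $a$. The presence of the backward integral is what distinguishes this from the classical BSDE argument: its It\^o correction enters with the opposite sign from the forward one, so I would need to be careful that the combination $\frac12(|Z|^2 - |g|^2)$-type terms still yield a coercive $|z|^2$ contribution, which is precisely where the assumption $\a<1$ is used. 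Once (STR) is confirmed for $\widetilde f^2$, applying \autoref{21.8.3.3} gives $\widetilde Y^1\les\widetilde Y^2$, and inverting $\psi$ yields $Y^1_t\les Y^2_t$ for all $t\in[0,T]$, completing the proof.
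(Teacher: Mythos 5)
Your high-level strategy coincides with the paper's: transform both equations by an increasing change of variable, verify that the transformed generator satisfies the structure condition (STR), apply \autoref{21.8.3.3}, and pull the ordering back through the monotone map; you also correctly identify the decisive verification and the role of $\a<1$ via the $\tfrac12(|z|^2-|g|^2)$ correction. The gap sits exactly at the step you flag as the main obstacle: your named candidate $\psi(y)=\frac{1}{\l}(e^{\l y}-1)$ (or any exponential variant), with $\l$ chosen large, cannot pass that verification, and the failure is a sign obstruction rather than a matter of sharper estimates. Write $Y=\phi(\widetilde Y)$ with $\phi=\psi^{-1}$ and let $w=\phi'$ be viewed as a function of $y$. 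The computation of $\partial_{\widetilde y}\widetilde f+a|\partial_{\widetilde z}\widetilde f|^2$ shows that the coefficient of $|z|^2$ is controlled by
\begin{equation*}
\frac{1-\a}{2}\,\frac{w''}{w}+2C\,\frac{|w'|}{w}+\Big(\frac{w'}{w}\Big)^2+\e
+2a\Big(C+\frac{|w'|}{w}(1+\a)\Big)^2,
\end{equation*}
so one needs $w''<0$ with $|w''|/w$ dominating the remaining nonnegative terms. For your candidate, $w(y)=e^{-\l y}$, hence $w''/w=\l^2>0$: the quadratic term produced by the change of variable enters with the \emph{unfavorable} sign, the coefficient is bounded below by $\frac{1+\a}{2}\l^2+2C\l>0$, and taking $\l$ large makes it worse, not better. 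The other exponential variants fare no better ($\psi(y)=-\frac1\l e^{-\l y}$ gives $w=e^{\l y}$ with $w''>0$; $\phi$ exponential gives $w$ affine with $w''=0$, leaving no negative term at all).

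What is missing is the correct structural form of the transformation, which is the genuine content of this step. One needs $w$ positive, increasing and \emph{concave} in $y$ on $[-M,M]$, with concavity strong enough that
\begin{equation*}
\frac{1-\a}{2}\,\frac{w''}{w}+2C\,\frac{w'}{w}+\Big(\frac{w'}{w}\Big)^2<-\d<0,
\end{equation*}
after which choosing $a$ and $\e$ small yields (STR). The paper, following Barles--Murat and Kobylanski, takes $\phi(\widetilde y)=\frac{1}{\l}\ln\big(\frac{e^{\l A\widetilde y}+1}{A}\big)-M$, so that $w(y)=A-e^{-\l(y+M)}$, $w'=\l e^{-\l(y+M)}>0$, $w''=-\l^2e^{-\l(y+M)}<0$, and the displayed inequality holds on $[-M,M]$ for suitable $A>1$ and $\l>0$; only for this saturating form does ``take $\l$ large'' point in the right direction. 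The rest of your plan (preservation of the ordering of terminal values and generators since the map is increasing and the correction terms are common to both equations, $\widetilde g$ inheriting \autoref{H3}, and inverting the map at the end) matches the paper and is sound.
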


\begin{remark} \rm
We point out that \autoref{21.12.24.8} still holds true if $f^{1}\left(t, Y_{t}^{2}, Z_{t}^{2}\right) \les f^{2}\left(t, Y_{t}^{2}, Z_{t}^{2}\right)$ a.s. for all $t\in[0,T]$ and the generator $f^{1}$ satisfies \autoref{H3}, or if either $f^{1}\left(t, Y_{t}^{1}, Z_{t}^{1}\right) \les f^{2}\left(t, Y_{t}^{1}, Z_{t}^{1}\right)$ a.s. for all $t\in[0,T]$ and the generator $f^{2}$ satisfies \autoref{H3}.
\end{remark} \rm

\begin{proof}[Proof of \autoref{21.12.24.8}]
The main idea of the proof is to look for a change of variable,  which could transform a coefficient $f$ that satisfies \autoref{H3} into a coefficient $\widetilde f$ that satisfies the structure condition (STR), and then use \autoref{21.8.3.3} to getting the goal.

\ms

Let $(Y, Z)\in L_{\dbF}^\i(0,T;\dbR)\times L_{\dbF}^2(0,T;\dbR^d)$ be a solution of BDSDE  \rf{21.8.3.1} with parameters $(\xi,f,g)$, where $\xi$ is a bounded terminal value.
Take $M \in \mathbb{R}$ such that $\|Y\|_{\infty}<M$, and consider the change of variable $y=\phi(\widetilde{y})$, where $\phi$ is a regular increasing function yet to be chosen.
Denote
\begin{align*}
\ds Y=\phi(\widetilde{Y}),\q~w(Y)=\phi'(\widetilde{Y}),\q~ Z=\phi'(\widetilde{Y})\widetilde{Z}=w(Y)\widetilde{Z},
\end{align*}
then
\begin{align*}
\widetilde{Y}=\phi^{-1}(Y),\q~ \widetilde{Z}=\frac{Z}{\phi'(\widetilde{Y})}=\frac{Z}{w(Y)},
\end{align*}
where $(\widetilde Y, \widetilde Z)$ is a  solution of the following BDSDE with parameters $(\widetilde{\xi},\widetilde f,\widetilde g)$:
\begin{equation}\label{21.12.29.1}
\widetilde Y_{t}=
\widetilde\xi+\int_{t}^{T} \widetilde f(s,\widetilde Y_s,\widetilde Z_{s}) d s
+\int_{t}^{T}\widetilde g(s,\widetilde Y_s,\widetilde Z_{s}) d \oaB_{s}
-\int_{t}^{T}\widetilde  Z_{s} d W_{s},
\end{equation}
where
\begin{equation}\label{21.12.25.1}
  \begin{aligned}
\ds \widetilde\xi&=\phi^{-1}(\xi),\\
\ns\ds \widetilde g(t,\widetilde y,\widetilde z)&=\frac{g(t,y,z)}{\phi'(\widetilde{y})}
=\frac{g(t,y,z)}{w(y)}
=\frac{g\big(t,\phi(\widetilde y), \phi'(\widetilde{y})\widetilde z \big)}{\phi'(\widetilde{y})}
,\\
\ns\ds \widetilde f(t, \widetilde y, \widetilde z)&=\frac{1}{\phi^{\prime}(\widetilde y)}\left(f\left(t, \phi(\widetilde y), \phi^{\prime}(\widetilde y) \widetilde z\right)+\frac{1}{2} \phi^{\prime \prime}(\widetilde y)\left[ \widetilde z^{2}-\widetilde g(t,\widetilde y,\widetilde z)^2\right]\right).
%
  \end{aligned}
\end{equation}
In fact, applying It\^o's formula to $\phi(\widetilde{Y})$, we get that
\begin{equation*}
\begin{aligned}
\ds \phi(\widetilde Y_t)=&\ \phi(\widetilde \xi)
+\int_{t}^{T}\left[ \phi'(\widetilde Y_s)\widetilde f(s,\widetilde Y_s,\widetilde Z_{s})
+\frac{1}{2}\phi''(\widetilde Y_s) \widetilde g(s,\widetilde Y_s,\widetilde Z_{s})^2
-\frac{1}{2}\phi''(\widetilde Y_s)\widetilde Z_s^2\right] d s\\
\ns\ds &\ +\int_{t}^{T}\phi'(\widetilde Y_s) \widetilde g(s,\widetilde Y_s,\widetilde Z_{s}) d \oaB_{s}
   -\int_{t}^{T}\phi'(\widetilde Y_s)\widetilde  Z_{s} d W_{s},
\end{aligned}
\end{equation*}
which, note that $Y=\phi(\widetilde{Y})$, comparing with BDSDE \rf{21.8.3.1} implies \rf{21.12.25.1}. In other words, BDSDEs \rf{21.8.3.1} and \rf{21.12.29.1} are equivalent. It is easy to check that the coefficient $\widetilde g$ satisfies \autoref{H3}.
Next, we are going to verify that the coefficient $\widetilde f$ given by \rf{21.12.25.1} satisfies the structure condition (STR).
Note that
\begin{align*}
\ds &y=\phi(\widetilde{y}),\q~
w(y)=\phi'(\widetilde y)=w(\phi(\widetilde y)),\q~
z=\phi'(\widetilde y)\widetilde z=w(y)\widetilde z=w(\phi(\widetilde y))\widetilde z,\\
\ns\ds &\frac{\partial y}{\partial\widetilde y}=w,\q~
\frac{\partial z}{\partial\widetilde y}=w'w\widetilde z,\q~
\frac{\partial z}{\partial\widetilde z}=w,\\
\ns\ds & \phi''(\widetilde y)=\frac{\partial w}{\partial \widetilde y}=\frac{\partial w}{\partial y}\cd \frac{\partial y}{\partial\widetilde y}=w'w,\q~
\frac{\partial w'}{\partial \widetilde y}=\frac{\partial w'}{\partial y}\cd \frac{\partial y}{\partial\widetilde y}=w''w,\\
\ns\ds& \frac{\partial \widetilde g}{\partial \widetilde{y}}(t, \widetilde {y}, \widetilde {z})
=\frac{\partial \widetilde g}{\partial y}(t, \widetilde {y}, \widetilde {z})
  \cd \frac{\partial y}{\partial\widetilde y}
 =\frac{\partial \big(g(t,y,z)/w\big)}{\partial y}\cd \frac{\partial y}{\partial\widetilde y}
 =\frac{\partial g}{\partial y}(t,y,z)-\frac{w'}{w}g(t,y,z),\\
\ns\ds& \frac{\partial \widetilde g}{\partial \widetilde{z}}(t, \widetilde {y}, \widetilde {z})
=\frac{\partial \widetilde g}{\partial z}(t, \widetilde {y}, \widetilde {z})
  \cd \frac{\partial z}{\partial\widetilde z}
 =\frac{\partial \big(g(t,y,z)/w\big)}{\partial z}\cd \frac{\partial z}{\partial\widetilde z}
 =\frac{\partial g}{\partial z}(t,y,z).
%
%
\end{align*}
From \rf{21.12.25.1}, we could compute that
%
\begin{align*}
\ds \frac{\partial \widetilde f}{\partial \widetilde{y}}(t, \widetilde {y}, \widetilde {z})
&=-\frac{w'}{w}\left(f\left(t, y, z\right)+\frac{1}{2} w'w
[\widetilde z^{2}-\widetilde{g}(t,\widetilde y,\widetilde{z})^2]\right)\\
\ns\ds &\q\ +\frac{1}{w}\bigg(
 \frac{\partial f}{\partial y}(t, y, z)w
+\frac{\partial f}{\partial z}(t, y, z)w'w\widetilde{z}
+\frac{1}{2}[ w''w^2+(w')^2w]\cd [\widetilde z^{2}-\widetilde{g}(t,\widetilde y,\widetilde{z})^2]\\
\ns\ds&\q\ +w'w\widetilde g(t,\widetilde y,\widetilde{z})\[\frac{w'}{w}g(t,y,z)
-\frac{\partial g}{\partial y}(t,y,z)\]\bigg)\\
%
%
\ns\ds &= -\frac{w'}{w}\left(f\left(t, y, z\right)+\frac{1}{2}\frac{w'}{w}[z^2-g(t,y,z)^2]\right)\\
\ns\ds &\q\ +\frac{\partial f}{\partial y}(t, y, z)
+\frac{w'}{w}\frac{\partial f}{\partial z}(t, y, z)z
+\frac{1}{2} \Big[\frac{w''}{w}+\Big(\frac{w'}{w}\Big)^2\Big]\cd [z^2-g(t,y,z)^2] \\
\ns\ds&\q\ +\(\frac{w'}{w}\)^2g(t,y,z)^2-\frac{w'}{w}g(t,y,z)\cd \frac{\partial g}{\partial y}(t,y,z)\\
\ns\ds &= \frac{1}{2}\frac{w''}{w} z^2
+\frac{w'}{w} \left( \frac{\partial f}{\partial z}(t, y, z)z - f(t, y, z)\right)
+\frac{\partial f}{\partial y}(t, y, z)  \\
\ns\ds&\q\ +\[\(\frac{w'}{w}\)^2-\frac{1}{2}\frac{w''}{w}\] g(t,y,z)^2-\frac{w'}{w}g(t,y,z)\cd \frac{\partial g}{\partial y}(t,y,z),\\
\ns\ds \frac{\partial \widetilde f}{\partial \widetilde{z}}(t, \widetilde {y}, \widetilde {z})
&=\frac{\partial f}{\partial z}(t, y, z)
+ \frac{w^{\prime}}{w}\[z-g(t,y,z)\cd \frac{\partial g}{\partial z}(t,y,z)\].
\end{align*}
%
We now show that a good choice of $\phi$ allows $\widetilde f$ to satisfy the structure condition (STR). Indeed, if $\phi$ is such that $w>0$, $w^{\prime}>0$, and $w''<0$, note that \rf{21.12.24.7} with $0<\a^2<\a<1$, then
%
\begin{align*}
\ds &\left(\frac{\partial\widetilde{f}}{\partial \widetilde{y}}+a\left|\frac{\partial \widetilde{f}}{\partial \widetilde{z}}\right|^{2}\right)(t, \widetilde{y}, \widetilde{z}) \\
\ns\ds &=\frac{1}{2}\frac{w''}{w} z^2
+\frac{w'}{w} \Big( \frac{\partial f}{\partial z}z - f\Big)
+\[\(\frac{w'}{w}\)^2-\frac{1}{2}\frac{w''}{w}\] g^2-\frac{w'}{w}g\cd \frac{\partial g}{\partial y}\\
\ns\ds&\q +\frac{\partial f}{\partial y}
+a\left|\frac{\partial f}{\partial z}+\frac{w^{\prime}}{w}
\[z-g\cd \frac{\partial g}{\partial z}\]\right|^{2} \\
\ns\ds & \les \[\a\(\frac{w'}{w}\)^2+\frac{1-\a}{2}\frac{w''}{w}\]|z|^{2}
+\frac{w'}{w}\Big(k(t)|z|+l(t)+2 C|z|^{2}\Big)
+\[\(\frac{w'}{w}\)^2-\frac{1}{2}\frac{w''}{w}\]l(t)\q\\
\ns\ds &\q +\frac{w'}{w}C\a|z|
+l_{\varepsilon}(t)+\varepsilon|z|^{2}+a\left(k(t)+\(C+\frac{w^{\prime}}{w}(1+\a)\)|z|\right)^{2}\\
\ns\ds &\les|z|^{2}\left[\a\(\frac{w'}{w}\)^2+\frac{1-\a}{2}\frac{w''}{w}
+\frac{w^{\prime}}{w}2C +\varepsilon
+a\(C+\frac{w^{\prime}}{w}(1+\a)\)^{2}\right] \\
\ns\ds &\q +|z|\left[\frac{w^{\prime}}{w} \[k(t)+C \a \]
+2 a k(t)\(C+\frac{w^{\prime}}{w}(1+\a)\)\right]\\
\ns\ds&\q +\[\(\frac{w'}{w}\)^2-\frac{1}{2}\frac{w''}{w}\]l(t)
+\frac{w^{\prime}}{w} l(t)+l_{\varepsilon}(t)+ak(t)^{2} \\
\ns\ds &\les |z|^{2}\left[
\frac{1-\a}{2}\frac{w''}{w}
+\frac{w^{\prime}}{w}2C
+\Big(\frac{w'}{w}\Big)^2
+\varepsilon
+2a\left(C+\frac{w^{\prime}}{w}(1+\a)\right)^{2}\right] \\
\ns\ds&\q +\[\(\frac{w'}{w}\)^2-\frac{1}{2}\frac{w''}{w}\]l(t)
+\frac{w^{\prime}}{w} l(t)+l_{\varepsilon}(t)+ak(t)^{2}
 +\frac{1}{4(1-\a)}\[k(t)+C\a \]^2+ak(t)^2,
\end{align*}
%
where in the last inequality we have used the inequality $2\beta_i\g_i\les |\b_i|^2+|\g_i|^2$ for $i=1,2$, with
$$
\begin{aligned}
 \b_1&=\sqrt{1-\a}\frac{w'}{w}|z|, \q~ \g_1=\frac{1}{2\sqrt{1-\a}}\[k(t)+C\a \];\\
\b_2&= |z| \sqrt{a} \left(C+\frac{w^{\prime}}{w}(1+\a)\right),\q~ \g_2=\sqrt{a} k(t).
\end{aligned}
$$
Thus, if we find $\phi$ satisfying all the required assumptions and such that on $[-M, M]$,
\begin{equation*}\label{21.12.25.6}
\frac{1-\a}{2}\frac{w''}{w}
+\frac{w^{\prime}}{w}2C
+\Big(\frac{w'}{w}\Big)^2
<-\d<0,
\end{equation*}
then choosing $a$ and $\varepsilon$ small enough, the coefficient before $|z|^{2}$ is non-positive for all $y\in[-M,M]$. Therefore (STR) is satisfied. In fact, by setting
$$
\phi(y)=\frac{1}{\lambda} \ln \left(\frac{e^{\lambda A \widetilde{y}}+1}{A}\right)-M,
$$
then a straightforward yet tedious computation gives us that
$$w=A-\exp\{-\lambda(y+M)\},\q~ w'=\l\exp\{-\lambda(y+M)\},\q~ w''=-\l^2\exp\{-\lambda(y+M)\},$$
which implies that $w''<0$ for all $y\in[-M,M]$ and $\l>0$. Moreover, when $A>1$ and $\lambda>0$, it is easy to see that $w>0$ and $w^{\prime}>0$ on the interval $[-M, M]$. Furthermore, one could choose a proper $A$ and $\lambda$ such that
$$
\begin{aligned}
\ds &\frac{1-\a}{2} \frac{w^{\prime \prime}}{w} +\frac{w^{\prime}}{w} 2 C
+\left(\frac{w^{\prime}}{w}\right)^{2} \\
\ns\ds &=\frac{1}{2}\frac{\exp (-\lambda(u+M))}{(A-\exp (-\lambda(u+M)))^{2}} \\
\ns\ds &\q \times\Big[\lambda^{2}\Big(-(1-\a)A+(3-\a) \exp (-\lambda(u+M))\Big)
+\lambda 4 C(A-\exp (-\lambda(u+M)))\Big]\\
\ns\ds&\les-\d<0.
\end{aligned}
$$
The proof is complete.
\end{proof}

As an important application of the comparison theorem, we study the stability of BDSDEs.

\begin{theorem}[Stability]\label{Stability}
Let $(f,g,\xi)$ be a set of parameters that satisfying the assumptions of \autoref{21.12.24.8}, and let $(f^n,g,\xi^n)$ be a sequence of parameters of BDSDEs such that:
\begin{itemize}
  \item [\rm{(i)}] There exist some constants $a$, $b$, $C$, $d\in\dbR$, $\a\in(0,1)$, and a continuous increasing function  $c:\dbR^+\rightarrow\dbR^+$ such that for all $n\in\dbN$, the coefficients $(f^n,g,\xi^n)$ satisfies \autoref{H2}.
  \item  [\rm{(ii)}] For all $n\in\dbN$, there exists a solution $\left(Y^{n}, Z^{n}\right)$ to the BDSDE with parameters $(f^n,g,\xi^n)$.
  \item  [\rm{(iii)}] The sequence $\{f^n;n\in\dbN\}$ converges to $f$ locally uniformly on $[0,T]\times \mathbb{R} \times \mathbb{R}^{d}$, and the sequence $\{\xi^n;n\in\dbN\}$ converges to $\xi$ in $L^{\infty}(\Omega)$.
\end{itemize}
Then there exists a pair of processes $(Y,Z)\in L_{\dbF}^\i(0,T;\dbR)\times L_{\dbF}^2(0,T;\dbR^d)$ such that
\begin{equation*}
\left\{\begin{aligned}
  &\lim_{n\rightarrow\i}Y^n=Y \hb{ uniformy on } [0,T],\\
  &\{Z^n;n\in\dbN\} \hb{ converges to $Z$ in } L_{\dbF}^2(0,T;\dbR^d).
\end{aligned}\right.
\end{equation*}
Moreover, the pair $(Y,Z)$ is a solution of BDSDE with parameters $(f,g,\xi)$.
\end{theorem}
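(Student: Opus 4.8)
The plan is to sandwich the (non-monotone) sequence $\{Y^n\}$ between two \emph{monotone} families of solutions built from the upper and lower envelopes of $\{f^n\}$, and then to collapse the sandwich using the uniqueness that the comparison theorem \autoref{21.12.24.8} provides for the limiting parameters. Set $\beta_p=\sup_{n\ges p}\|\xi^n-\xi\|_\i$, so that $\beta_p\downarrow 0$, and define
$$\bar f^p=\sup_{n\ges p}f^n,\qquad \underline f^p=\inf_{n\ges p}f^n,\qquad \bar\xi_p=\xi+\beta_p,\qquad \underline\xi_p=\xi-\beta_p.$$
Then $\underline f^p\les f^n\les\bar f^p$ and $\underline\xi_p\les\xi^n\les\bar\xi_p$ for every $n\ges p$, while $\bar f^p\downarrow f$ and $\underline f^p\uparrow f$ locally uniformly (because $f^n\to f$ locally uniformly) and $\bar\xi_p\downarrow\xi$, $\underline\xi_p\uparrow\xi$ in $L^\i_{\sF_T}(\O;\dbR)$.

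First I would check that the envelopes inherit \autoref{H2}: since all $f^n$ share the same constants $a,b,C,d$ and the same function $c$, working on the a priori region $|y|\les M$ (equivalently, after the $y$-truncation used in \emph{Step 3} of the proof of \autoref{existence}) yields $|\bar f^p|,|\underline f^p|\les C(1+|y|+|z|^2)$ and a decomposition of the structural form \rf{3.0.1}; the coefficient $g$ is fixed and already satisfies \autoref{H2}. Consequently \autoref{existence} produces the maximal solution $(\bar Y^p,\bar Z^p)$ of the BDSDE with parameters $(\bar f^p,g,\bar\xi_p)$ and the minimal solution $(\underline Y^p,\underline Z^p)$ of the BDSDE with parameters $(\underline f^p,g,\underline\xi_p)$. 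The maximal/minimal property stated in \autoref{existence} then gives, for every $n\ges p$,
$$\underline Y^p_t\les Y^n_t\les\bar Y^p_t,\qquad t\in[0,T],$$
since the parameters of $(f^n,g,\xi^n)$ are dominated by those of $(\bar f^p,g,\bar\xi_p)$ from above and by those of $(\underline f^p,g,\underline\xi_p)$ from below. The same property shows that $\{\bar Y^p\}_p$ is decreasing and $\{\underline Y^p\}_p$ increasing in $p$, while \autoref{PriEstimate} bounds all of them uniformly in $L^\i_\dbF$.

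Next I would pass to the limit $p\to\i$ in each monotone family via the monotone stability result \autoref{Mono-stable}: its hypotheses {\rm (i)}--{\rm (iii)} hold for $\{(\bar f^p,g,\bar\xi_p)\}_p$ (uniform quadratic growth, local uniform convergence to $f$, existence, monotonicity, uniform bound), so $\bar Y^p\to\bar Y$ uniformly on $[0,T]$, $\bar Z^p\to\bar Z$ in $L^2_\dbF$, and $(\bar Y,\bar Z)$ is a solution of the BDSDE with parameters $(f,g,\xi)$; symmetrically $(\underline Y,\underline Z)$ solves the same BDSDE. Since $(f,g,\xi)$ satisfies \autoref{H3}, applying \autoref{21.12.24.8} with $f^1=f^2=f$ in both directions gives uniqueness, whence $\bar Y=\underline Y=Y$ and $\bar Z=\underline Z=Z$. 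The sandwich $\underline Y^p\les Y^n\les\bar Y^p$ together with $\underline Y^p,\bar Y^p\to Y$ uniformly then forces $Y^n\to Y$ uniformly on $[0,T]$, and identifies $(Y,Z)$ as the (unique) solution of $(f,g,\xi)$.

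It remains to upgrade to $Z^n\to Z$ in $L^2_\dbF$; here monotonicity of $\{Y^n\}$ is unavailable, so I would argue by a direct energy estimate rather than re-running \emph{Step 1} of \autoref{Mono-stable}. Applying It\^o's formula (\autoref{Itoformula}) to $|Y^n-Y|^2$ between $0$ and $T$ and using $|g(s,Y^n,Z^n)-g(s,Y,Z)|^2\les C|Y^n-Y|^2+\a|Z^n-Z|^2$, the term $\a\,\dbE\int_0^T|Z^n-Z|^2ds$ is absorbed into the left-hand side (as $\a<1$); the residual cross term is dominated by $\|Y^n-Y\|_\i\cdot\dbE\int_0^T|f^n(s,Y^n,Z^n)-f(s,Y,Z)|\,ds$, which is at most $\|Y^n-Y\|_\i$ times a constant, thanks to the quadratic growth and the uniform bound $\dbE\int_0^T|Z^n|^2ds\les K$ from \autoref{PriEstimate}. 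Letting $n\to\i$ and invoking $\|\xi^n-\xi\|_\i\to0$ and $\|Y^n-Y\|_\i\to0$ yields $\dbE\int_0^T|Z^n-Z|^2ds\to0$. The step I expect to be the main obstacle is the very first technical point: verifying rigorously that the envelopes $\bar f^p,\underline f^p$ still fit the structural form demanded by \autoref{H2}/\rf{3.0.1}, so that \autoref{existence} and its maximal/minimal comparison apply. The splitting of the linear-in-$y$ part (which must stay within $[d,a]$) from the quadratic remainder (of the form $b+c(|y|)|z|^2$) does not survive $\sup$/$\inf$ verbatim and has to be reconstructed on the truncated region $|y|\les M$; once this is settled, everything downstream is a routine combination of \autoref{existence}, \autoref{Mono-stable}, and \autoref{21.12.24.8}.
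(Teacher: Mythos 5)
Your construction is, at its core, the same as the paper's: the paper also sandwiches $Y^n$ between the extremal solutions associated with the envelopes $H^n=\sup_{m\ges n}f^m$, $H_n=\inf_{m\ges n}f^m$ and $\xi^{n*}=\sup_{m\ges n}\xi^m$, $\xi_{n*}=\inf_{m\ges n}\xi^m$ (your $\xi\pm\beta_p$ is an equivalent variant), invokes the maximal/minimal solutions of \autoref{existence}, passes to the limit in each monotone family by \autoref{Mono-stable}, and collapses the sandwich with the uniqueness provided by \autoref{21.12.24.8}. The point you flag as the main obstacle --- that \autoref{H2} survives the envelopes --- is real but resolvable, and is in fact passed over silently by the paper: writing $f^m=a_0^m y+f_0^m$, take $\tilde a_0=\sup_m a_0^m$ for $y\ges0$ and $\tilde a_0=\inf_m a_0^m$ for $y<0$, so that $\sup_m(a_0^m y)=\tilde a_0\,y$ with $d\les\tilde a_0\les a$, and then $|\sup_m f^m-\tilde a_0 y|\les\sup_m|f_0^m|\les b+c(|y|)|z|^2$; thus the structural decomposition is reconstructed with the same constants, and the locally uniform convergence of the envelopes to $f$ is immediate.

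Where you genuinely depart from (and improve on) the paper is the $Z$-part: the paper's proof stops at the uniform convergence of $Y^n$ and never establishes $Z^n\to Z$ in $L^2_\dbF(0,T;\dbR^d)$, although this is asserted in the statement. Your energy estimate is the right tool, but as written it has a gap: the sandwich plus \autoref{Mono-stable} yields $\delta_n\deq\sup_{t\in[0,T]}|Y^n_t-Y_t|\to0$ only almost surely (with $\delta_n\les 2M$), not convergence in the $L^\infty_\dbF$-norm (essential supremum over $(t,\omega)$), so you cannot factor $\|Y^n-Y\|_\i$ out of the cross term. The repair: bound the cross term by $2\,\dbE\big[\delta_n\int_0^T|f^n(s,Y^n_s,Z^n_s)-f(s,Y_s,Z_s)|\,ds\big]\les C'\,\dbE\big[\delta_n\big(1+\int_0^T(|Z^n_s|^2+|Z_s|^2)\,ds\big)\big]$ and apply Cauchy--Schwarz; the required uniform bound $\sup_n\dbE\big[\big(\int_0^T|Z^n_s|^2ds\big)^2\big]<\i$ comes from revisiting the proof of \autoref{PriEstimate}, where $\int_0^T|Z^n_s|^2ds$ is dominated by a deterministic constant plus two stochastic integrals whose integrands $\Phi'(Y^n)g$ and $\Phi'(Y^n)Z^n$ are uniformly bounded multiples of $g$ and $Z^n$, hence have second moments controlled via It\^o's isometry by $\dbE\int_0^T(|g(s,Y^n_s,Z^n_s)|^2+|Z^n_s|^2)\,ds\les(1+\a)K$. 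Since $\dbE[\delta_n^2]\to0$ by bounded convergence and $\dbE|\xi^n-\xi|^2\les\|\xi^n-\xi\|_\i^2\to0$, the estimate closes and gives $\dbE\int_0^T|Z^n_s-Z_s|^2ds\to0$. With this patch your argument is complete and, on this point, strictly more complete than the paper's own proof.
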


\begin{proof}
For simplicity presentation, we define
\begin{align*}
\ds H_{n}=&\ \inf _{m \ges n} f^{m},\q~ H^{n}=\sup _{m \ges n} f^{m}, \\
\ns\ds  \xi_{n*}=&\ \inf _{m \ges n} \xi^{m},\q~ \xi^{n *}=\sup _{m \ges n} \xi^{m},
\end{align*}
and we consider the minimal solutions $\left(Y_{*}^{n} Z_{*}^{n}\right)$ of BDSDE with parameters $\left(H^{n}, \xi^{n*}\right)$, and the maximal solutions $\left(Y^{n *}, Z^{n *}\right)$ of BDSDE with parameters $\left(H_{n}, \xi_{n*}\right)$, as both:
\begin{itemize}
  \item [\rm{(i)}] The sequence $\{H_{n};n\in\dbN\}$ is increasing and converges locally uniformly to $f$, and the sequence $\{\xi_{n*};n\in\dbN\}$ is increasing.
  \item [\rm{(ii)}] The sequence $\{H^n;n\in\dbN\}$ is decreasing and converges locally uniformly to $f$, and the sequence $\{\xi^{n \ast};n\in\dbN\}$ is decreasing.
\end{itemize}
Then we have that
\begin{itemize}
  \item [\rm{(i)}] The sequence $\{Y^{n *};n\in\dbN\}$ is bounded and decreasing, and for every $n \in \mathbb{N}$
      $$Y^{n *} \ges Y^{n}.$$
      Hence, from \autoref{Mono-stable}, there exists $\left(Y^{*}, Z^{*}\right)$ such that $\{Y^{n *};n\in\dbN\}$ converges uniformly to $Y^{*}$, and $(Y^{*}, Z^{*})$ is a solution of the BDSDE with parameters $(f, g, \xi)$.
  \item [\rm{(ii)}] The sequence $\{Y_{*}^{n};n\in\dbN\}$ is bounded and decreasing, and for every $n \in \mathbb{N}$,
      $$Y_{*}^{n} \les Y^{n}.$$
      Hence, from  \autoref{Mono-stable}, there exists $\left(Y_{*}, Z_{*}\right)$ such that $\{Y_{*}^{n};n\in\dbN\}$ converges uniformly to $Y_{*}$, and $\left(Y_{*}, Z_{*}\right)$ is a solution of the BDSDE with parameters $(f, g, \xi)$.
\end{itemize}
 Finally, by \autoref{21.12.24.8}, for every $n \in \mathbb{N}$, one has that
$$Y_{*}^{n} \les Y^{n} \les Y^{n *} \quad~ \text {and} \quad~ Y_{*}=Y^{*}=Y,$$
therefore the sequence $\{Y^{n};n\in\dbN\}$ converges uniformly to $Y$. This completes the proof.
\end{proof}

\section{Application to SPDE}\label{Sec5}

When the generator $f$ is of linear growth with respect to $y$ and $z$, Pardoux--Peng \cite{Pardoux-Peng-94} used BDSDEs to give a probabilistic representation for the classical solution of semilinear SPDEs; and Bally--Matoussi \cite{Bally-Matoussi-01} and Zhang--Zhao \cite{Zhang-Zhao-07} obtained the relationship between the solution of BDSDEs and the Sobolev solution of SPDEs.
Then, Wu--Zhang \cite{Wu-Zhang-11} got the Sobolev solution of related SPDEs when $f$ is continuous and  locally monotone in $y$.
Zhang--Zhao \cite{Zhang-Zhao-15} and Bahlali et al. \cite{Bahlali-17} used BDSDEs to prove the existence and uniqueness of related SPDEs when the generator $f$ is of polynomial growth in $Y$ and
grows in $Z$ super-linearly (or sub-quadratically), respectively.
In this section, when the generator $f$ is of quadratic growth in $z$, we use BDSDEs to give a probabilistic representation for the solutions of related semilinear SPDEs in Sobolev spaces, and use it to prove the existence and uniqueness of Sobolev solutions of the SPDEs, thus extending the nonlinear stochastic Feynman-Kac formula.

\ms

First, we recall some notations. For Euclidean spaces $\dbH$ and $\dbG$, denote by $C_{l, b}^{k}(\dbH;\dbG)$ the set of functions of class $C^{k}$ from $\dbH$ to $\dbG$, whose partial derivatives of order less than or equal to $k$ are bounded.
Denote by $\mathcal{C}_{c}^{1, \infty}\left([0, T] \times \dbH\right)$ the set of compactly supported functions $\varphi(t, x)$ which are continuously derivable in the $t$-variable and infinitely continuously derivable in the $x$-variable.

\ms

Consider the following forward-backward doubly stochastic differential equation:
\begin{align}
\ds X^{t,x}_s=&\ x+\int_t^s b(X^{t,x}_r)dr+\int_t^s\sigma(X^{t,x}_r)dW_r,\q~ t\les s\les T, \label{FSDE}\\
\ns\ds Y^{t,x}_s=&\ h(X^{t,x}_T)+\int_s^Tf(r,X^{t,x}_r,Y^{t,x}_r,Z^{t,x}_r)dr
+\int_s^Tg(r,X^{t,x}_r,Y^{t,x}_r,Z^{t,x}_r)d\oaB_r - \int_s^TZ^{t,x}_{r}dW_{r}, \label{FBDSDE}
\end{align}
where the coefficients $b$ and $\sigma$ come from $C_{l, b}^{2}(\dbR^n;\dbR^n)$ and $C_{l, b}^{3}(\dbR^n;\dbR^{n\times d})$, respectively. Then it is well known that the forward equation \rf{FSDE} admits a unique adapted solution, denoted by $\{X^{t,x}_s;t\les s\les T\}$, which satisfying
$$\dbE\left[\sup_{t\les s\les T}|X^{t,x}_s|^p \right]<\i,\q~ \forall p>1.$$
Now, we would like to connect the forward-backward system \rf{FSDE} and \rf{FBDSDE} with quadratic growth to the following semilinear stochastic partial differential equation:
\begin{equation}\label{SPDE}
\begin{aligned}
\ds u(s, x)=& h(x)+\int_{s}^{T}\left\{\mathcal{L} u(r, x)+f\big(r, x, u(r, x),(\sigma^\top \nabla u)(r, x)\big)\right\} dr \\
\ns\ds &+\int_{s}^{T} g\big(r, x, u(r, x),(\sigma^\top \nabla u)(r, x)\big)d\oaB_r, \quad t \leqslant s \leqslant T,
\end{aligned}
\end{equation}
where $\sigma^\top$ denotes the transposed of $\sigma$, and
\begin{align}\label{Lu}
\mathcal{L}=\sum_{i=1}^{n} b_{i} \frac{\partial}{\partial x_{i}}+\frac{1}{2} \sum_{i, j=1}^{n} a_{i j} \frac{\partial^{2}}{\partial x_{i} \partial x_{j}}, \quad~\left(a_{i j}\right)=\sigma \sigma^\top.
\end{align}
Before introducing the definition of Sobolev solutions of SPDE \rf{SPDE}, we let
$\rho: \mathbb{R}^{n} \rightarrow \mathbb{R}^{+}$ be an integrable continuous positive function, and $L^{2}(\mathbb{R}^{n}; \rho^{-1}(x) \mathrm{d} x)$ be the weighted $L^{2}$ space endowed with the norm
$$
\|u\|_{\rho}^{2}\deq\int_{\mathbb{R}^{n}}|u(x)|^{2} \rho^{-1}(x)d x.
$$
Let us take the weight $\rho(x)=\exp\{F(x)\}$, where $F: \mathbb{R}^{n} \rightarrow \mathbb{R}$ is a continuous function  and there is a positive constant $R>0$ such that $F \in C_{l, b}^{2}\left(\mathbb{R}^{n}, \mathbb{R}\right)$ when $|x|>R$. For example, one can take $\rho(x)=\exp\{\delta|x|\}$ with $\delta \in \mathbb{R}^+$ or $\rho(x)=(1+|x|)^q$ with $q>n+2$.

\ms

Let $\sH$ be the set of random fields $\left\{u(t, x); 0 \leqslant t \leqslant T, x \in \mathbb{R}^{n}\right\}$ such that $u(t, x)$ is $\mathscr{F}_{t, T}^{B}$-measurable, and both $u$ and $\sigma^\top \nabla u$ belong to $L^{2}\left(\Omega \ts(0, T) \times \mathbb{R}^{n};\mathrm{d} \mathbb{P}\otimes \mathrm{d} t \otimes \rho^{-1}(x) \mathrm{d} x \right)$. Then $\mathscr{H}$ is a Banach space endowed with the following norm:
$$
\|u\|_{\mathscr{H}}^{2}\deq \dbE\left[\int_{\mathbb{R}^{n}} \int_{0}^{T}\left(|u(t, x)|^{2}+\left|(\sigma^\top \nabla u)(t, x)\right|^{2}\right) d t \rho^{-1}(x) d x\right].
$$

Now we present the definition of Sobolev solution of SPDE \rf{SPDE}.
\begin{definition}\label{Sobolev} \rm
We say that $u$ is a Sobolev solution of SPDE \rf{SPDE}, if $u \in \mathscr{H}$ and for any $\varphi \in \mathcal{C}_{c}^{1, \infty}([0, T] \times\mathbb{R}^{n})$,
\begin{equation}\label{Sobolev2}
\begin{aligned}
\ds &\int_{\mathbb{R}^{n}} \int_{t}^{T} u(s, x) \partial_{s} \varphi(s, x) ds dx+\int_{\mathbb{R}^{n}} u(t, x) \varphi(t, x) d x-\int_{\mathbb{R}^{n}} h(x) \varphi(T, x) d x \\
\ns\ds &\quad-\frac{1}{2} \int_{\mathbb{R}^{n}} \int_{t}^{T}(\sigma^\top \nabla u)(s, x) \cdot(\sigma^\top \nabla \varphi)(s, x) d s d x
-\int_{\mathbb{R}^{n}} \int_{t}^{T} u \mathrm{div}[(b-\widetilde{A}) \varphi](s, x) d s d x \\
\ns\ds &=\int_{\mathbb{R}^{n}} \int_{t}^{T} f\big(s, x, u(s, x),(\sigma^\top \nabla u)(s, x)\big) \varphi(s, x) d s dx \\
\ns\ds &\quad+\int_{\mathbb{R}^{n}} \int_{t}^{T} g\big(s, x, u(s, x),(\sigma^\top \nabla u)(s, x)\big) \varphi(s, x)  d B_{s} dx,
\end{aligned}
\end{equation}
where $A$ is a $n$-vector whose coordinates are given by
$\widetilde{A}_j=\frac{1}{2}\sum_{i=1}^n\frac{\partial a_{ij}}{\partial x_i}$ with $1\les j\les n.$
\end{definition}

It should be pointed out that, for the classical solution, indeed, if one supposes that $u$ is a solution of SPDE \rf{SPDE} of class $C^2$, then, similar to the method of Pardoux--Peng \cite{Pardoux-Peng-94}, applying \autoref{Itoformula} to $u(s,X^{t,x}_s)$ implies that the pair of processes $\{Y^{t,x}_s,Z^{t,x}_s;t\les s\les T\}$ defined by
$$
Y^{t,x}_s=u(s, X_{s}^{t, x}) \q~\hb{and}\q~ Z^{t,x}_s=(\sigma^\top \nabla u)(s, X_{s}^{t, x})
$$
is a solution of the backward doubly stochastic differential equation  \rf{FBDSDE}.

\ms

In the following, we discuss this relationship between the Sobolev solution of SPDEs and the solution of BDSDEs with quadratic growth. In particular, we first focus on a simple situation to better reflect the idea of solving the problem.

\subsection{Simple situation}

Consider the following type of backward doubly stochastic differential equations: for $t\les s\les T$,
\begin{equation}\label{BDSDE-B}
Y^{t,x}_s=h(X^{t,x}_T)+\int_s^T\big[f(r,X^{t,x}_r,Y^{t,x}_r,Z^{t,x}_r)+C(Z^{t,x}_r)^2\big]dr
+\int_s^T\alpha Z^{t,x}_rd\oaB_r - \int_s^TZ^{t,x}_{r}dW_{r},
\end{equation}
where  $X^{t,x}$ is the solution of \rf{FSDE},  $C>0$ and $\a\in(-1,1)$ are two constants, and the functions $f:[0,T]\ts\dbR^n\ts\dbR\ts\dbR^d\rightarrow\dbR$ and $h:\dbR^n\rightarrow\dbR$ satisfy the following assumption.
\begin{assumption}\label{A5}\rm
The function $h$ is bounded, and there exist bounded functions $l:[0,T]\rightarrow\dbR^+$ and $k:[0,T]\rightarrow\dbR$ such that for all $t\in[0,T],$ $x\in\dbR^n$, $y\in\dbR$ and $z\in\dbR^d$, the generator $f$ satisfies the following condition:
\begin{equation*}
\begin{aligned}
\ds |f(t,x, y,z)|\les l(t),\q~ \frac{\partial f}{\partial y}(t,x,y,z)
+ \frac{\partial f}{\partial z}(t,x,y,z)&\les k(t),\q~ \text {a.s. }
\end{aligned}
\end{equation*}
\end{assumption}


\begin{proposition}\label{22.3.9.0}
Under \autoref{A5}, the following stochastic partial differential equation
\begin{equation}\label{SPDE0}
\begin{aligned}
\ds u(s, x)=& h(x)+\int_{s}^{T}\left\{
\mathcal{L}u(r, x)
+f\big(r, x, u(r, x),(\sigma^\top \nabla u)(r, x)\big)
+C\big((\sigma^\top \nabla u)(r, x)\big)^2
\right\} dr \\
\ns\ds &+\a\int_{s}^{T}(\sigma^\top \nabla u)(r, x)d\oaB_r,
\quad~ t \leqslant s \leqslant T,
\end{aligned}
\end{equation}
%
admits a unique Sobolev solution $u\in\sH$, and
%
for every $t\in[0,T]$,
\begin{equation*}
u(s, X_{s}^{t, x})= Y^{t,x}_s \q~\hb{and}\q~
(\sigma^\top \nabla u)(s, X_{s}^{t, x})=Z^{t,x}_s, \q~
 a.s., a.e.\ s\in[t,T],\ x\in\dbR^n,
\end{equation*}
where $\{(Y_{s}^{t, x}, Z_{s}^{t, x}); t \les s \les T\}$ is the unique solution of quadratic BDSDE \rf{BDSDE-B}.
\end{proposition}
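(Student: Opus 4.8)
The plan is to exploit the special ``$C|z|^2$ together with $\a\,d\oaB$'' structure, which is precisely the configuration linearized by the exponential change of variable of \autoref{Exp1}, in order to reduce the quadratic problem to one covered by the linear-growth stochastic Feynman--Kac theory of Bally--Matoussi \cite{Bally-Matoussi-01}. Throughout I set $\b=\frac{2C}{1-\a^2}$. First I observe that under \autoref{A5} the total generator $F(r,x,y,z)=f(r,x,y,z)+C|z|^2$ is of quadratic growth in $z$ with $f$ bounded, while $g(r,x,y,z)=\a z$ satisfies $|g|^2=\a^2|z|^2$ with $\a^2<1$; hence \autoref{H2} holds (take $a_0\equiv0$, $b=\|l\|_\i$, $c\equiv C$), and \autoref{existence} yields a bounded solution $(Y^{t,x},Z^{t,x})$ with continuous $Y$. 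For uniqueness I would pass to $y=\exp(\b Y)$: by \autoref{Itoformula} and the choice of $\b$ the two quadratic contributions (from the $\a\,d\oaB$ and the $dW$ integrals) cancel exactly, so $(y,z)=\big(\exp(\b Y),\b\exp(\b Y)Z\big)$ solves a BDSDE with bounded terminal value $\exp(\b h(X_T))$, linear diffusion coefficient $\bar g=\a z$, and \emph{bounded} generator $\bar f(r,x,y,z)=\b y f\big(r,x,\tfrac{\ln y}{\b},\tfrac{z}{\b y}\big)$ as in \rf{22.2.18.1}. The combined bound $\partial_yf+\partial_zf\les k$ of \autoref{A5} is exactly what furnishes the one-sided estimate needed to compare two such solutions, and since the change of variable is invertible on the range of $Y$, uniqueness transfers back to \rf{BDSDE-B}.

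\textbf{Step 2 (reduction of the SPDE).} The same change of variable at the level of the random field linearizes \rf{SPDE0}. Applying \autoref{Itoformula} to $v=\exp(\b u)$ and using $\sigma^\top\nabla v=\b v\,\sigma^\top\nabla u$ together with $\mathcal{L}v=\b v\,\mathcal{L}u+\tfrac12\b^2 v|\sigma^\top\nabla u|^2$, I would check that the net coefficient of $|\sigma^\top\nabla u|^2$ in the equation for $v$ equals $\b C+\tfrac12\b^2\a^2-\tfrac12\b^2=0$ precisely when $\b=\frac{2C}{1-\a^2}$. Consequently $u$ solves \rf{SPDE0} if and only if $v=\exp(\b u)$ solves
\begin{equation*}
v(s,x)=e^{\b h(x)}+\int_s^T\big\{\mathcal{L}v(r,x)+\bar f\big(r,x,v(r,x),(\sigma^\top\nabla v)(r,x)\big)\big\}dr+\a\int_s^T(\sigma^\top\nabla v)(r,x)\,d\oaB_r,
\end{equation*}
whose generator $\bar f$ is bounded; this places the transformed equation within the scope of the linear-growth Sobolev-solution theory.

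\textbf{Step 3 (apply the known Feynman--Kac and transform back).} Applying Bally--Matoussi \cite{Bally-Matoussi-01} to the transformed problem produces a unique Sobolev solution $v\in\sH$ with $v(s,X^{t,x}_s)=y^{t,x}_s$ and $(\sigma^\top\nabla v)(s,X^{t,x}_s)=z^{t,x}_s$ a.e., where $(y^{t,x},z^{t,x})$ is the transformed BDSDE solution of Step 1. By the comparison estimate of \autoref{Exp1}, $v$ is bounded and bounded away from $0$, so $u:=\frac{1}{\b}\ln v$ is well defined; differentiating gives $\sigma^\top\nabla u=\frac{\sigma^\top\nabla v}{\b v}$, which shows $u\in\sH$ (the weighted norms of $u$ and $\sigma^\top\nabla u$ are controlled by those of $v$, since $v$ is pinched between two positive constants) and, combined with $y^{t,x}_s=\exp(\b Y^{t,x}_s)$ and $z^{t,x}_s=\b\exp(\b Y^{t,x}_s)Z^{t,x}_s$, yields $u(s,X^{t,x}_s)=Y^{t,x}_s$ and $(\sigma^\top\nabla u)(s,X^{t,x}_s)=Z^{t,x}_s$. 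Uniqueness of the Sobolev solution of \rf{SPDE0} then follows from that of the transformed equation, the map $u\mapsto e^{\b u}$ being a bijection between the two solution sets.

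\textbf{Main obstacle.} I expect the difficulty to lie in Step~3's identification and regularity bookkeeping rather than in the algebra. One must justify that the bounded-but-not-globally-Lipschitz generator $\bar f$ still falls under the hypotheses of the cited Sobolev-solution theorem, i.e.\ that the combined derivative bound in \autoref{A5} delivers the requisite monotonicity/comparison for the transformed problem; and one must carry the equivalence-of-norms argument (comparing the weighted $L^2$ norm in $x$ with the law of the forward flow $X^{t,x}$) through the logarithmic change of variable to secure both $u\in\sH$ and the a.e.\ identity $Z^{t,x}=(\sigma^\top\nabla u)(\cdot,X^{t,x})$. By contrast, the cancellation of the quadratic terms is forced by the single algebraic choice $\b=\frac{2C}{1-\a^2}$ and is the easy part.
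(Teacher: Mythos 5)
Your proposal is correct and follows essentially the same route as the paper's proof: the exponential change of variable with $\beta=\frac{2C}{1-\alpha^2}$ cancels the quadratic term, the transformed linear-growth problem is resolved by the Bally--Matoussi Sobolev Feynman--Kac theory, and the logarithmic change of variable (with the two-sided positive bounds on the transformed solution) carries the representation and well-posedness back to \rf{SPDE0}. The only cosmetic difference is that you deduce uniqueness from the bijection $u\mapsto e^{\beta u}$ between Sobolev solution sets, while the paper deduces it from uniqueness of the quadratic BDSDE \rf{BDSDE-B} via the representation formula; these are equivalent one-line arguments.
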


\begin{proof}
Applying the exponential transformation of variable to $\bar Y=\exp\{\beta Y\}$ transforms
formally BDSDE \rf{BDSDE-B} into the following BDSDE:
\begin{align*}
\ds  \bar Y^{t,x}_s=&\ \bar Y^{t,x}_T+\int_s^T\b\bar Y^{t,x}_rf(r,X^{t,x}_r,Y^{t,x}_r,Z^{t,x}_r)dr
+\Big[C-\frac{1-\a^2}{2}\b\Big]\int_s^T\b \bar Y^{t,x}_r|Z^{t,x}_{r}|^{2}dr\\
\ns\ds&\ +\a\int_s^T\b \bar Y^{t,x}_rZ^{t,x}_{r} d\overleftarrow{B}_r
-\int_s^T\b \bar Y^{t,x}_rZ^{t,x}_rdW_r,\q~ t\les s\les T.
\end{align*}
By taking $\b=\frac{2C}{1-\a^2}$, the above equation becomes
\begin{align}\label{22.3.10.1}
\ds  \bar Y^{t,x}_s= \bar h(X^{t,x}_T)
+\int_s^T\bar f(r,X^{t,x}_r,\bar Y^{t,x}_r,\bar Z^{t,x}_r)dr
 +\a\int_s^T \bar Z^{t,x}_r d\overleftarrow{B}_r
-\int_s^T \bar Z^{t,x}_rdW_r,\q~ t\les s\les T,
\end{align}
where
\begin{align}
\ds \bar Z^{t,x}_r=\b\bar Y^{t,x}Z^{t,x}_r,\q~ \bar h(X^{t,x}_T)=\exp\{\b h(X^{t,x}_T)\}, \nn\\
\ns\ds \bar f(r,X^{t,x}_r,\bar Y^{t,x}_r,\bar Z^{t,x}_r)=\b\bar Y^{t,x}_rf(r,X^{t,x}_r,Y^{t,x}_r,Z^{t,x}_r).\label{22.4.14.5}
\end{align}
Under \autoref{A5}, Eq. \rf{22.3.10.1} is a classical BDSDE with a globally Lipschitz generator. Then Bally--Matoussi \cite{Bally-Matoussi-01} (see also Zhang--Zhao \cite{Zhang-Zhao-10} and Wu--Zhang \cite{Wu-Zhang-11}) implies that the following SPDE
\begin{equation}\label{22.4.14.2}
\begin{aligned}
\ds \bar u(s, x)=& \bar h(x)+\int_{s}^{T}\left\{\mathcal{L}\bar u(r, x)+\bar f\big(r, x, \bar u(r, x),(\sigma^\top \nabla \bar u)(r, x)\big)\right\} dr \\
\ns\ds &+\a\int_{s}^{T}(\sigma^\top \nabla \bar u)(r, x)d\oaB_r,
\quad~ t \leqslant s \leqslant T,
\end{aligned}
\end{equation}
admits a unique Sobolev solution $\bar u\in\sH$, and for every $t\in[0, T]$,
$$
\bar u(s, X_{s}^{t, x})=\bar Y_{s}^{t, x} \q~ \text{and} \q~
\sigma^\top \nabla \bar u(s, X_{s}^{t, x})=\bar Z_{s}^{t, x},\q~
 a.s., a.e.\ s\in[t,T],\ x\in\dbR^n.
$$
On the other hand, we see that
$$\bar Y^{t,x}_s=\exp(\beta Y^{t,x}_s)\q~\hb{and}\q~\bar Z^{t,x}_s=\b \bar Y^{t,x}_sZ^{t,x}_s,\q~ t\les s\les T,$$
where the pair $( Y^{t,x}, Z^{t,x})\in L_{\dbF}^\i(0,T;\dbR)\times L_{\dbF}^2(0,T;\dbR^d)$ is the solution of BDSDE \rf{BDSDE-B}.
So we would like to define
\begin{align}\label{22.4.14.3}
\ds &u(s,X^{t,x}_s)\deq\frac{\ln \bar u(s,X^{t,x}_s)}{\b}=\frac{\ln \bar Y^{t,x}_s}{\b}=Y^{t,x}_s,\q~ t\les s\les T,
\end{align}
and then one can compute that
\begin{align}\label{22.4.14.4}
(\sigma^\top \nabla u)(s,X^{t,x}_s)=\frac{(\sigma^\top \nabla \bar u)(s,X^{t,x}_s)}{\b \bar u(s,X^{t,x}_s)}
=\frac{\bar Z^{t,x}_s}{\b \bar Y^{t,x}_s}=Z^{t,x}_s,\q~ t\les s\les T.
\end{align}
Hence, applying It\^{o}'s formula (see \autoref{Itoformula}) to $u=\ln \bar u/\b$ could transforms formally SPDE \rf{22.4.14.2} into SPDE \rf{SPDE0}. In fact, by using It\^{o}'s formula  to $u=\ln \bar u/\b$, one has that
\begin{equation}\label{22.4.14.6}
\begin{aligned}
  d u(s,x)=&\ \frac{-1}{\b \bar u(s,x)}
  \left\{\mathcal{L}\bar u(s, x)+\bar f\big(s, x, \bar u(s, x),(\sigma^\top \nabla \bar u)(s, x)\big)\right\}ds\\
  & -\frac{\a (\sigma^\top \nabla \bar u)(s, x)}{\b \bar u(s,x)}d\overleftarrow{B}_s
  +\frac{\a^2}{2}\frac{\big( (\sigma^\top \nabla \bar u)(s, x)\big)^2}{\b \bar u(s,x)^2}ds.
\end{aligned}
\end{equation}
From the definitions \rf{Lu} and \rf{22.4.14.3}, it is easy to compute that
\begin{align}\label{22.4.14.7}
  \mathcal{L}\bar u(s, x)=\b \bar u(s,x)\[\mathcal{L} u(s, x)
  +  \frac{\b}{2}\big((\sigma^\top \nabla u)(s, x)\big)^2 \].
\end{align}
Then, combining \rf{22.4.14.5}, \rf{22.4.14.3}, \rf{22.4.14.4}, \rf{22.4.14.6}, \rf{22.4.14.7}, and note that $\b=\frac{2C}{1-\a^2}$, we deduce that
\begin{equation}\label{22.4.15.1}\left\{
\begin{aligned}
\ds du(s, x)=& -\left\{
\mathcal{L}u(s, x)
+f\big(s, x, u(s, x),(\sigma^\top \nabla u)(s, x)\big)
+C\big((\sigma^\top \nabla u)(s, x)\big)^2
\right\} ds \\
\ns\ds &-\a (\sigma^\top \nabla u)(s, x)d\oaB_s,
\quad~ t \leqslant s \leqslant T,\\
\ns\ds u(T,x)=& h(x).
\end{aligned}\right.
\end{equation}
In other words, $u$ is a Sobolev solution of SPDE \rf{SPDE0}, and for every $t\in[0,T]$,
\begin{equation*}
u(s, X_{s}^{t, x})= Y^{t,x}_s \q~ \hb{and}\q~
(\sigma^\top \nabla u)(s, X_{s}^{t, x})=Z^{t,x}_s, \q~
 a.s., a.e.\ s\in[t,T],\ x\in\dbR^n,
\end{equation*}
where $(Y^{t, x}, Z^{t, x})$ is the unique solution of BDSDE \rf{BDSDE-B}.

\ms

Finally, the uniqueness of solutions of SPDE \rf{SPDE0} comes from the uniqueness of BDSDE \rf{BDSDE-B}. This completes the proof.
\end{proof}

\begin{remark}\rm
The idea of the above proof is firstly to transform a simple quadratic BDSDE into a classical BDSDE, and then by using the relationship between the classical BDSDEs and SPDEs, one can conversely obtain the relationship between the BDSDE with quadratic growth and its related SPDE.
%
\end{remark}

\subsection{General situation}

In this subsection, we use BDSDE \rf{FBDSDE} to give a probabilistic representation for the solutions of SPDE \rf{SPDE} in Sobolev space, and use it to prove the existence and uniqueness of Sobolev solutions of SPDE \rf{SPDE}, thus obtaining the nonlinear stochastic Feynman-Kac formula in this framework. The following lemma is an extension of equivalence of norm principle given in Bally--Matoussi \cite{Bally-Matoussi-01} and plays an important role later.

\begin{lemma}\label{22.4.11.5}\rm
There exist two positive constants $k_{1}$ and $K_{1}$ which depend on $T, \rho, b$ and $\sigma$, such that for any $t \leqslant s \leqslant T$ and $\phi \in L^{1}\left(\Omega \times \mathbb{R}^{n}; \mathrm{~d} \mathbb{P} \otimes \rho^{-1}(x) \mathrm{d} x\right)$ which is independent of $\mathscr{F}_{t, s}^{W}$,
$$
k_{1} \dbE\int_{\mathbb{R}^{d}}|\phi(x)| \rho^{-1}(x) \mathrm{d} x
\leqslant \dbE\int_{\mathbb{R}^{d}}\left|\phi\left(X_{s}^{t, x}\right)\right| \rho^{-1}(x) \mathrm{d} x \leqslant K_{1} \dbE\int_{\mathbb{R}^{d}}|\phi(x)| \rho^{-1}(x) \mathrm{d} x.
$$
Moreover, for any $\Phi \in L^{1}\left(\Omega \times[0, T] \times \mathbb{R}^{d}; \mathrm{~d} \mathbb{P} \otimes \mathrm{d} t \otimes \rho^{-1}(x) \mathrm{d} x\right)$ such that $\Phi(s, \cdot)$ is independent of $\mathscr{F}_{t, s}^{W}$, one has
$$
\begin{aligned}
k_{1} \dbE\int_{\mathbb{R}^{d}} \int_{t}^{T}|\Phi(s, x)| \mathrm{d} s \rho^{-1}(x) \mathrm{d} x & \leqslant \dbE\int_{\mathbb{R}^{d}} \int_{t}^{T}\left|\Phi\left(s, X_{s}^{t, x}\right)\right| \mathrm{d} s \rho^{-1}(x) \mathrm{d} x \\
& \leqslant K_{1} \dbE\int_{\mathbb{R}^{d}}\int_{t}^{T}|\Phi(s, x)|\mathrm{d}s \rho^{-1}(x)\mathrm{d}x.
\end{aligned}
$$
\end{lemma}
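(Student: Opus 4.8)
The plan is to run the classical \emph{equivalence of norms} argument of Bally--Matoussi \cite{Bally-Matoussi-01}, which rests on Kunita's theory of stochastic flows, while carefully tracking that every constant produced is \emph{uniform in} $s\in[t,T]$. Granting this uniformity, the second (process-valued) assertion follows from the first by Fubini's theorem and integration in $s$: applying the pointwise bound to $\Phi(s,\cd)$ and integrating $\dbE\int_{\dbR^n}|\Phi(s,X^{t,x}_s)|\rho^{-1}(x)\,dx$ over $[t,T]$ reproduces the stated inequalities for $\Phi$. Hence it suffices to establish the first display.

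The engine is a pathwise change of variables. Under $b\in C^2_{l,b}$ and $\sigma\in C^3_{l,b}$, Kunita's theory guarantees that for a.e. $\o$ the map $x\mapsto X^{t,x}_s$ is a $C^1$-diffeomorphism of $\dbR^n$; write $\widehat X^{t,y}_s$ for its inverse flow. Substituting $y=X^{t,x}_s$ pathwise, taking expectation, and then using that $\phi$ is independent of $\sF^W_{t,s}$ while the flow is $\sF^W_{t,s}$-measurable (so the expectation factorizes, with $\bar\phi(y)\deq\dbE|\phi(y)|$), I obtain
$$\dbE\int_{\dbR^n}|\phi(X^{t,x}_s)|\rho^{-1}(x)\,dx=\int_{\dbR^n}\bar\phi(y)\,\dbE\Big[\rho^{-1}(\widehat X^{t,y}_s)\,\big|\det\nabla_y\widehat X^{t,y}_s\big|\Big]\,dy.$$
Thus the whole lemma is reduced to the two-sided bound
$$k_1\,\rho^{-1}(y)\les\dbE\Big[\rho^{-1}(\widehat X^{t,y}_s)\,\big|\det\nabla_y\widehat X^{t,y}_s\big|\Big]\les K_1\,\rho^{-1}(y),\q\forall y\in\dbR^n.$$

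For the upper inequality I would split by Cauchy--Schwarz and use that $\sup_{y,s}\dbE|\det\nabla_y\widehat X^{t,y}_s|^p<\i$ for every $p\ges1$ (the Jacobian and its inverse solve linear SDEs whose coefficients are bounded by the derivatives of $b,\sigma$), combined with a bound on the weight ratio $\rho^{-1}(\widehat X^{t,y}_s)/\rho^{-1}(y)$. For the lower inequality I would invoke the harmonic-mean form of Jensen's inequality, $\dbE[\eta]\ges 1/\dbE[\eta^{-1}]$, for the positive random variable $\eta=\rho^{-1}(\widehat X^{t,y}_s)\,|\det\nabla_y\widehat X^{t,y}_s|$: since $\eta^{-1}$ is exactly the weight--Jacobian factor of the \emph{forward} flow evaluated at $\widehat X^{t,y}_s$, its expectation is controlled by the same type of estimate, and $k_1$ emerges as the reciprocal of that bound.

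The main obstacle is the uniform control of the weight ratio. With $\rho=\exp\{F\}$ and $F\in C^2_{l,b}$ outside a ball (e.g.\ $\rho(x)=\exp\{\d|x|\}$ or $\rho(x)=(1+|x|)^q$ with $q>n+2$), the ratio is dominated, up to a polynomial factor, by $\exp\{c\,|\widehat X^{t,y}_s-y|\}$ for a constant $c$, so I must show that the flow increments have \emph{uniformly bounded exponential moments}, $\sup_{y,s}\dbE\exp\{\lambda|\widehat X^{t,y}_s-y|\}<\i$. This is precisely where the boundedness of the derivatives of $b$ and $\sigma$ is decisive: it forces sub-Gaussian flow increments, which delivers the required exponential integrability together with the uniformity in $s\in[t,T]$ that lets the pointwise estimate pass to the integrated statement. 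Assembling the Cauchy--Schwarz split, the moment bounds on the Jacobians, and this weight estimate then yields $k_1$ and $K_1$ depending only on $T,\rho,b,\sigma$.
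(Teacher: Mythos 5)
Your overall skeleton is the right one, and it is worth noting that the paper itself never proves this lemma: it is recalled as ``an extension of the equivalence of norm principle'' and delegated entirely to the citation \cite{Bally-Matoussi-01}, so you are supplying a proof where the paper supplies only a reference. The parts of your argument that match that literature are sound: the reduction of the time-integrated statement to the fixed-time one by Fubini (with constants uniform in $s$), the pathwise change of variables through Kunita's inverse flow, the factorization $\dbE\big[|\phi(y)|\,G(y)\big]=\dbE|\phi(y)|\cdot\dbE[G(y)]$ (legitimate because the flow on $[t,s]$ is $\sF^W_{t,s}$-measurable while $\phi$ is independent of $\sF^W_{t,s}$), the reduction to the two-sided pointwise bound on $\dbE\big[\rho^{-1}(\widehat X^{t,y}_s)|\det\nabla_y\widehat X^{t,y}_s|\big]$, and the uniform moment bounds for the Jacobians (linear variational equations with bounded coefficients).

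The genuine gap is exactly the estimate you call decisive: the claim that bounded derivatives of $b,\sigma$ force sub-Gaussian flow increments, i.e. $\sup_{y,s}\dbE\exp\{\lambda|\widehat X^{t,y}_s-y|\}<\i$. Bounded derivatives only make $b,\sigma$ Lipschitz; in this paper $C^{k}_{l,b}$ (as in Pardoux--Peng) allows the coefficients themselves to grow linearly. Take $n=d=1$, $b=0$, $\sigma(x)=x\in C^{3}_{l,b}$: then $X^{t,x}_s=x\exp\{W_s-W_t-\tfrac{s-t}{2}\}$, the increment is $x(e^{Z}-1)$ with $Z$ Gaussian, and $\dbE\exp\{\lambda|X^{t,x}_s-x|\}=\i$ for every $\lambda>0$ and $x\neq0$ (lognormal tails); the inverse flow $\widehat X^{t,y}_s=ye^{-Z}$ behaves the same. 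Moreover, in the exponential-weight case this is not a reparable technicality: for this $\sigma$ and $\rho(x)=e^{\delta|x|}$ one has
$$\dbE\big[\rho^{-1}(\widehat X^{t,y}_s)|\det\nabla_y\widehat X^{t,y}_s|\big]=\dbE\big[e^{-\delta|y|e^{-Z}}e^{-Z}\big]\ges c\,e^{-\delta|y|/e},$$
by restricting to the event $\{Z\in(1,2)\}$, which cannot be dominated by $K_1e^{-\delta|y|}$ uniformly in $y$; so the two-sided bound (and with it the lemma tested on indicators $\phi=\mathbf{1}_{[R,R+1]}$) fails, meaning that combination implicitly requires $b,\sigma$ \emph{bounded}, not merely $C_{l,b}$. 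For the polynomial weight $\rho(x)=(1+|x|)^{q}$ the statement is true under linear growth, but your route is still blocked, because you dominate the weight ratio by $\exp\{c|\widehat X^{t,y}_s-y|\}$ and then need precisely the nonexistent exponential moments. The repair there is to bound the ratio multiplicatively rather than additively: with $\hat x=\widehat X^{t,y}_s$,
$$\frac{\rho(y)}{\rho(\widehat X^{t,y}_s)}=\bigg(\frac{1+|X^{t,\hat x}_s|}{1+|\hat x|}\bigg)^{q},$$
and invoke the standard flow estimate $\sup_{x,\,t\les s\les T}\dbE\big[\big((1+|X^{t,x}_s|)/(1+|x|)\big)^{p}\big]<\i$ (Gronwall plus BDG, uniform in $x$ because the flow grows at most multiplicatively), which needs only polynomial moments. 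With that substitution, your Cauchy--Schwarz split and your harmonic-mean Jensen step do close the polynomial-weight case.
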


For the coefficients of BDSDE \rf{FBDSDE}, we present the following assumptions.

\begin{assumption}\label{A6}\rm
For all $t\in[0,T],$ $x\in\dbR^n$, $y\in\dbR$ and $z\in\dbR^d$,
the function $h:\dbR^n\rightarrow\dbR$ is bounded, and the functions
 $f:[0,T]\ts\dbR^n\ts\dbR\ts\dbR^d\rightarrow\dbR$ and
 $g:[0,T]\ts\dbR^n\ts\dbR\ts\dbR^d\rightarrow\dbR^l$ satisfy \autoref{H3}.
\end{assumption}

\begin{remark}\rm
Note that under \autoref{A6},  BDSDE \rf{FBDSDE} has a unique solution  $( Y^{t,x}, Z^{t,x})\in L_{\dbF}^\i(0,T;\dbR)\times L_{\dbF}^2(0,T;\dbR^d)$. Moreover, the coefficients $f$, $g$ and $h$ satisfy that
\begin{equation*}
\int_{\dbR^n}\int_0^T\(|h(x)|^2+|f(t,x,0,0)|^2+|g(t,x,0,0)|^2\)dt\rho^{-1}(x)dx<\i.
\end{equation*}
\end{remark}

Now we are in a position to present the main result of this section.

\begin{theorem}[Feynman-Kac formula]\label{22.3.9.1}
Under \autoref{A6}, SPDE \rf{SPDE} admits a unique Sobolev solution $u\in\sH$, and for every $t\in[0,T]$,
\begin{equation*}\label{22.4.13.5}
u(s, X_{s}^{t, x})= Y^{t,x}_s \q~\hb{and}\q~
(\sigma^\top \nabla u)(s, X_{s}^{t, x})=Z^{t,x}_s, \q~
 a.s., a.e.\ s\in[t,T],\ x\in\dbR^n,
\end{equation*}
where $\{(Y_{s}^{t, x}, Z_{s}^{t, x}); t \les s \les T\}$ is the unique solution of BDSDE \rf{FBDSDE} with quadratic growth.
\end{theorem}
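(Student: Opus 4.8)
The plan is to obtain the quadratic generator $f$ as a locally uniform limit of globally Lipschitz generators, to invoke the classical linear-growth theory of Bally--Matoussi \cite{Bally-Matoussi-01} for each approximation, and then to transport every convergence from the BDSDE side to the SPDE side through the equivalence-of-norms \autoref{22.4.11.5}. This is the natural generalisation of the exponential-transformation argument used in \autoref{22.3.9.0}, which no longer suffices here because $g$ is genuinely nonlinear in $(y,z)$ under \autoref{A6}. Concretely, I would first choose a sequence $\{f^n;n\in\dbN\}$ of functions that are globally Lipschitz in $(y,z)$, satisfy \autoref{H2} uniformly in $n$, and converge to $f$ locally uniformly on $[0,T]\ts\dbR^n\ts\dbR\ts\dbR^d$ (for instance by the inf-convolution \rf{22.4.9.1} after a truncation, exactly as in the proof of \autoref{existence}). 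For each $n$, \autoref{PP-EQ} gives the unique solution $(Y^{n,t,x},Z^{n,t,x})$ of BDSDE \rf{FBDSDE} with generator $f^n$, and Bally--Matoussi \cite{Bally-Matoussi-01} provides a Sobolev solution $u^n\in\sH$ of SPDE \rf{SPDE} with $f$ replaced by $f^n$, satisfying $u^n(s,X^{t,x}_s)=Y^{n,t,x}_s$ and $(\sigma^\top\nabla u^n)(s,X^{t,x}_s)=Z^{n,t,x}_s$.

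The second step is to pass to the limit on the probabilistic side. Since $f$ and $g$ satisfy \autoref{H3}, the hypotheses of \autoref{21.12.24.8} hold, so \autoref{Stability} applies and yields a (necessarily unique) pair $(Y^{t,x},Z^{t,x})$ with $Y^{n,t,x}\to Y^{t,x}$ uniformly on $[0,T]$ and $Z^{n,t,x}\to Z^{t,x}$ in $L^2_\dbF(0,T;\dbR^d)$. Applying the two inequalities of \autoref{22.4.11.5} to $\phi=u^m-u^n$ and to $\Phi=\sigma^\top\nabla u^m-\sigma^\top\nabla u^n$ turns the Cauchy property of $\{(Y^n,Z^n)\}$ into the Cauchy property of $\{u^n\}$ in $\sH$; the limit $u$ lies in $\sH$, is $\mathscr{F}^B_{t,T}$-measurable, and by construction satisfies $u(s,X^{t,x}_s)=Y^{t,x}_s$ and $(\sigma^\top\nabla u)(s,X^{t,x}_s)=Z^{t,x}_s$. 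This already delivers the Feynman--Kac representation, and it remains only to verify that $u$ solves the weak formulation \rf{Sobolev2}.

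The third step is to let $n\to\i$ in \rf{Sobolev2} written for $u^n$. The terms linear in $u^n$ and $\sigma^\top\nabla u^n$ pass by the strong $\sH$-convergence just obtained, and the backward It\^o integral term passes because $g$ is Lipschitz under \autoref{H3}, so $g(\cd,u^n,\sigma^\top\nabla u^n)\to g(\cd,u,\sigma^\top\nabla u)$ in $L^2$ and the It\^o--Skorohod integral is continuous for this convergence. \textbf{The genuine obstacle is the quadratic term} $f^n(s,x,u^n,\sigma^\top\nabla u^n)$, whose growth $|f^n|\les K+C|\sigma^\top\nabla u^n|^2$ rules out a naive passage to the limit. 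Here I would reuse the Kobylanski domination (Lemma 2.5 of \cite{Kobylanski-00}) already exploited in \autoref{Mono-stable}: along a subsequence $Z^n\to Z$ a.s.\ with $\widetilde Z=\sup_n|Z^n|\in L^2_\dbF(0,T;\dbR^d)$, whence $\sup_n|f^n(\cd,Y^n,Z^n)|\les C(1+M+\widetilde Z^2)\in L^1(dt\otimes d\dbP)$. Transporting this bound and the almost-everywhere convergence through \autoref{22.4.11.5} and then invoking Lebesgue's dominated convergence theorem in $L^1(\O\ts[0,T]\ts\dbR^n;d\dbP\otimes dt\otimes\rho^{-1}dx)$ shows $f^n(\cd,u^n,\sigma^\top\nabla u^n)\to f(\cd,u,\sigma^\top\nabla u)$ tested against any $\varphi\in\mathcal{C}_c^{1,\infty}([0,T]\ts\dbR^n)$, so \rf{Sobolev2} holds in the limit and $u$ is a Sobolev solution.

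Finally, for uniqueness I would argue in the reverse direction. If $u,v\in\sH$ are two Sobolev solutions, a mollification of \rf{Sobolev2} as in \cite{Bally-Matoussi-01}, justifying the formal application of \autoref{Itoformula} to $u(s,X^{t,x}_s)$ and $v(s,X^{t,x}_s)$, shows that both compositions furnish solutions in $L^\i_\dbF(0,T;\dbR)\ts L^2_\dbF(0,T;\dbR^d)$ of the quadratic BDSDE \rf{FBDSDE}. The comparison \autoref{21.12.24.8} then forces these BDSDE solutions to coincide, and \autoref{22.4.11.5} upgrades the equality of the compositions to $u=v$ in $\sH$. I expect the passage of the quadratic $f$-term in the third step to be the main difficulty, the domination via $\sup_n|Z^n|\in L^2$ being the crux; the measurability and backward-integral bookkeeping are routine once that domination is secured.
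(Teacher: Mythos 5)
Your overall scaffolding---approximate the generator by globally Lipschitz ones, invoke Bally--Matoussi \cite{Bally-Matoussi-01} for each $n$, transfer the BDSDE convergence to $\sH$ through \autoref{22.4.11.5}, pass to the limit in \rf{Sobolev2} with the Kobylanski domination, and obtain uniqueness from the comparison \autoref{21.12.24.8}---is exactly the skeleton of the paper's Steps 2--6, and those parts are sound. The genuine gap is in your very first step, the construction of the sequence $\{f^n\}$. The sup-convolution \rf{22.4.9.1} applied directly to a generator with genuine upper quadratic growth in $z$ is identically $+\infty$: if $f(t,x,y,\cdot)$ can be as large as $c|z|^2$, then $\sup_q\{f(t,x,y,q)-n|q-z|\}=+\infty$ for every $n$. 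In the proof of \autoref{existence} this construction is applied only \emph{after} the exponential transformation (Step 1 there), precisely because the transformed generator $\tilde f$ is bounded above by the Lipschitz function $l(u)$, which makes \rf{22.4.9.1} finite, globally Lipschitz, and monotone. Nor would a truncation $f^n(t,x,y,z)=f(t,x,y,\pi_n(z))$ rescue the plan: \autoref{H3} gives only the one-sided bound $\partial f/\partial y\les l_\e(t)+\e|z|^2$, so $f$ need not be Lipschitz in $y$ at all, while \autoref{PP-EQ} and the Bally--Matoussi theory you invoke require full Lipschitz continuity in $(y,z)$; and the uniform-\autoref{H2} hypothesis (i) of \autoref{Stability} would still have to be verified for whatever sequence you produce. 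As written, the object on which your entire argument rests does not exist.

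The premise you used to discard the exponential transformation is also incorrect. The paper's proof of \autoref{22.3.9.1} does use it, nonlinear $g$ notwithstanding: under \autoref{A6} the coefficient $g$ satisfies $|g|^2\les\a|z|^2$ (this is part of \autoref{H3}), so the change of variables $\bar Y=\exp(\b Y)$ with $\b=\frac{2C}{1-\a}$ produces parameters $(\bar f,\bar g,\bar h)$ in which the term $\frac{1}{2}\b^2\bar y\big[g^2-|z|^2\big]$ cancels the upper quadratic growth of $\b\bar y f$, leaving $\bar f\les \b\bar y\,l(t)$, while $\bar g$ satisfies \autoref{H2} and $\bar h$ is bounded. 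The transformed equation is not yet a Lipschitz BDSDE (so you are right that the transformation alone ``does not suffice'' as it did in \autoref{22.3.9.0}), but it is exactly the preprocessing that makes \rf{22.4.9.1} well defined; the paper then runs your Steps 2--4 on $(\bar f^n,\bar g,\bar h)$, proves the limit $\bar u$ solves the transformed SPDE, and recovers $u=\ln\bar u/\b$ by a logarithmic transformation at the end (its Step 6). Reinstating the exponential transformation at the front of your argument repairs the proof; without it, the approximating sequence you need cannot be produced by the method you cite.
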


\begin{proof}

Uniqueness. The uniqueness of SPDE \rf{SPDE} follows from the uniqueness of BDSDE
\rf{FBDSDE}. In fact, if $u^1$ and $u^2$ are two Sobolev solutions of SPDE \rf{SPDE}, then the following two pairs
\begin{equation*}
\big(u^1(s, X_{s}^{t, x}), (\sigma^\top \nabla u^1)(s, X_{s}^{t, x})\big)\q~\hb{and}\q~
\big(u^2(s, X_{s}^{t, x}), (\sigma^\top \nabla u^2)(s, X_{s}^{t, x})\big)
\end{equation*}
solve BDSDE \rf{FBDSDE}. So the uniqueness of BDSDE \rf{FBDSDE} gives us that
\begin{equation*}
u^1(s, X_{s}^{t, x})=u^2(s, X_{s}^{t, x}), \q a.s., a.e.\ s\in[t,T],\ x\in\dbR^n,
\end{equation*}
and in particular
\begin{equation*}
u^1(t,x)=u^2(t,x), \q a.s., a.e.,\ x\in\dbR^n.
\end{equation*}

Next, we prove the existence. Compared with the proof of the simple situation (see \autoref{22.3.9.0}), the main idea here we used is similar, however, the techniques are more complex and include an approximation. So the proof will be divided into six steps with some techniques borrowed from the proof of \autoref{existence}.

\ms

$\emph{Step 1. The exponential transformation.}$

\ms

Let $\b=\frac{2C}{1-\a}$ be a given positive constant.
Applying the exponential change of variable (see \autoref{Itoformula}) to $\bar Y^{t,x}=\exp(\beta Y^{t,x})$ could transforms formally BDSDE \rf{FBDSDE} with parameters $(f,g,h)$ into the following BDSDE with parameters $(\bar f, \bar g,\bar h)$,
\begin{align}\label{3.9.2.2}
 \bar Y^{t,x}_t=\bar h(X^{t,x}_T)+\int_t^T\bar f(s,X^{t,x}_s,\bar Y^{t,x}_s,\bar Z^{t,x}_s)ds+\int_t^T\bar g(s,X^{t,x}_s,\bar Y^{t,x}_s,\bar Z^{t,x}_s)d\oaB_s - \int_t^T\bar Z^{t,x}_{s}dW_{s},
\end{align}
where $\bar Z^{t,x}_s=\b \bar Y^{t,x}_sZ^{t,x}_s$ and
\begin{equation*}
\begin{aligned}
\ds\bar f(s,X^{t,x}_s,\bar Y^{t,x}_s,\bar Z^{t,x}_s)=
&\ \b \bar Y^{t,x}_s f\(s,X^{t,x}_s,\frac{\ln \bar Y^{t,x}_s}{\b},\frac{\bar Z^{t,x}_s}{\b \bar Y^{t,x}_s}\)\\
\ns\ds&\ +\frac{1}{2}\b^2\bar Y^{t,x}_s\Big[g\(s,X^{t,x}_s,\frac{\ln \bar Y^{t,x}_s}{\b},\frac{\bar Z^{t,x}_s}{\b \bar Y^{t,x}_s}\)^2
  -\Big(\frac{\bar Z^{t,x}_s}{\b \bar Y^{t,x}_s}\Big)^2\Big],\\
\ds  \bar g(s,X^{t,x}_s,\bar Y^{t,x}_s,\bar Z^{t,x}_s)
  =&\ \b \bar Y^{t,x}_s g\(s,X^{t,x}_s,\frac{\ln \bar Y^{t,x}_s}{\b},\frac{\bar Z^{t,x}_s}{\b \bar Y^{t,x}_s}\),\\
\ns\ds  \bar h(X^{t,x}_T)=&\ \exp(\b h(X^{t,x}_T)),\q~ t\les s\les T.
\end{aligned}
\end{equation*}
Then the pair of processes $(\bar Y^{t,x},\bar Z^{t,x})$ is the solution of BDSDE \rf{3.9.2.2} with parameters $(\bar f, \bar g,\bar h)$, where $\bar h$ is bounded and $\bar g$ satisfies \autoref{H2}.

\ms

$\emph{Step 2. Approximation of SPDE.}$

\ms

Let the sequence $\{\bar{f}^n;n\in\dbN\}$ be defined as in \rf{22.4.9.1}. Then, the same arguments we developed in the proof of \autoref{existence} deduce that $\bar f^{n}$ is globally Lipschitz continuous with the constant $n$ and decreasing converges pointwise to the function $\bar f$.
Now, for each $(t,x)\in[0,T]\ts\dbR^n$ and $n\in\dbN$, we define the processes $\bar u^{n}$ and $\bar v^{n}$ by
\begin{equation}\label{22.4.11.1}
  \bar u^{n}(t,x)=\bar{Y}^{t,x,n}_t\q~\hb{and}\q~ \bar v^{n}(t,x)=\bar Z^{t,x,n}_t,
\end{equation}
where $(\bar Y^{t,x,n},\bar Z^{t,x,n})$ is the unique solution of the following BDSDE with parameters $(\bar f^n,\bar g,\bar h)$: for $t\in[0,T]$ and $x\in\dbR^n$,
\begin{equation}\label{22.4.11.2}
\begin{aligned}
\ds \bar Y^{t,x,n}_t=&\
\bar h(X^{t,x}_T)+\int_t^T\bar f^n(s,X^{t,x}_s,\bar Y^{t,x,n}_s,\bar Z^{t,x,n}_s)ds\\
\ns\ds & +\int_t^T\bar g(s,X^{t,x}_s,\bar Y^{t,x,n}_s,\bar Z^{t,x,n}_s)d\oaB_s - \int_t^T\bar Z^{t,x,n}_{s}dW_{s}.
\end{aligned}
\end{equation}
Note that the parameters $(\bar f^n,\bar g,\bar h)$ of the above BDSDE \rf{22.4.11.2} satisfy that
\begin{equation*}
\int_{\dbR^n}\int_0^T\(|\bar h(x)|^2+|\bar f^n(t,x,\bar y,0)|^2+|\bar g(t,x,\bar y,0)|^2\)dt\rho^{-1}(x)dx<\i,\q
\forall \bar y\in\big[\exp\{-\b M\},\exp\{\b M\}\big].
\end{equation*}
Then Bally--Matoussi \cite{Bally-Matoussi-01} (see also Zhang--Zhao \cite{Zhang-Zhao-10} and Wu--Zhang \cite{Wu-Zhang-11}) deduce that
\begin{equation}\label{22.4.19.1}
\bar v^{n}(s,x)=(\sigma^\top\nabla \bar u^n)(s,x),
\end{equation}
and $\bar u^n$ is the unique Sobolev solution to the following SPDE:
\begin{equation}\label{SPDE.2}
\begin{aligned}
\ds \bar u^n(s, x)=& \bar h(x)+\int_{s}^{T}\left\{\mathcal{L} \bar u^n(r, x)
+\bar f^n\big(r, x, \bar u^n(r, x),(\sigma^\top \nabla \bar u^n)(r, x)\big)\right\} dr \\
\ns\ds &+\int_{s}^{T} \bar g\big(r, x, \bar u^n(r, x),(\sigma^\top \nabla \bar u^n)(r, x)\big)d\oaB_r, \quad s\in[t,T],
\end{aligned}
\end{equation}
with
\begin{equation}\label{22.4.12.1}
\bar u^{n}\left(s, X_{s}^{t, x}\right)=\bar Y_{s}^{t, x, n} \quad~\hb{and}\q~\big(\sigma^\top \nabla \bar u^{n}\big)\left(s, X_{s}^{t, x}\right)=\bar Z_{s}^{t, x, n}, \q~ a.s., a.e.\ s\in[t,T],\ x\in\dbR^n,
\end{equation}
where $(\bar Y^{t,x,n},\bar Z^{t,x,n})$ is the unique solution of BDSDE \rf{22.4.11.2} with parameters $(\bar f^n,\bar g,\bar h)$.

\ms

On the other hand, similar to \rf{22.4.11.3}, the comparison theorem for Lipschitz continuous coefficients (see \autoref{Shi05}) implies that the sequence $\{\bar Y^{t, x, n};n\in\dbN\}$ is bounded and decreasing in $n$ such that
\begin{equation}\label{22.4.11.4}
\exp(-\b \|h\|_\i) \les \bar Y_s^{t, x, n+1}\les \bar Y_s^{t, x, n}\les \exp(\b \|h \|_{\i}),\q~ \hb{a.s.}\ s\in[t,T].
\end{equation}
In addition, by using the same arguments we developed in the proofs of \autoref{PriEstimate} and \autoref{Mono-stable}, there exists a positive constant $K$ such that
\begin{equation}\label{22.4.1.6}
 \sup_{n\in\dbN} \dbE\int_t^T |\bar Z^{t, x, n}_s|^2ds\les K,
\end{equation}
and
\begin{equation}\label{22.4.11.7}
\left\{\begin{aligned}
\ds &\lim_{n\rightarrow\i}\bar Y^{t, x, n}_s=\bar Y^{t, x}_s,\q~ \hb{a.s.}\ s\in[t,T],\\
 \ns\ds  & \lim_{n\rightarrow\i} \dbE\int_t^T |\bar Z^{t, x, n}_s-\bar Z^{t, x}_s|^2ds=0,
\end{aligned}\right.
\end{equation}
where $(\bar Y^{t, x},\bar Z^{t, x})\in L_{\dbF}^\i(0,T;\dbR)\times L_{\dbF}^2(0,T;\dbR^d)$ is the solution of BDSDE \rf{3.9.2.2} with parameters $(\bar f,\bar g,\bar h)$.

\ms

$\emph{Step 3. Convergence of SPDE \rf{SPDE.2}}$

\ms

We point out that the limits which we consider below hold along with a subsequence, however for simplicity, the subsequence will still be indexed by $n$. From \autoref{22.4.11.5}, note \rf{22.4.19.1}, \rf{22.4.12.1}, \rf{22.4.11.4} and \rf{22.4.1.6}, we have that
\begin{equation}\label{22.4.19.2}
\begin{aligned}
\ds & \dbE \int_{\mathbb{R}^{n}} \int_{t}^{T}\left(\left|\bar u^{n}(s, x)\right|^{2}
+\left|\bar v^{n}(s, x)\right|^{2}\right) d s \rho^{-1}(x) d x \\
\ns\ds & \les K_{1} \dbE\int_{\mathbb{R}^{n}}\(
T\|\bar Y^{t,x,n}\|_{L^\i_{\dbF}(t,T)}^2+\int_t^T|\bar Z^{t,x,n}_s|^2ds\)\rho^{-1}(x) d x \\
\ns\ds & \les K_1 \int_{\mathbb{R}^{n}}\big[T\exp(2\b \|h\|_\i)+K\big] \rho^{-1}(x) d x
<\infty.
\end{aligned}
\end{equation}
Moreover, similar to \rf{22.4.11.6}, we have
$$
|\bar g\big(s, x, \bar u^n(s, x),\bar v^n(s, x)\big)|^2\les \a|\bar v^n(s,x)|^2,
 \q~ a.s., a.e.\ s\in[t,T],\ x\in\dbR^n.
$$
Then, note that $\bar f^{n}$ is globally Lipschitz continuous, we deduce from \rf{22.4.19.2} that
\begin{equation*}
  \dbE\int_{\dbR^n}\int_t^T\[|\bar f^n\big(s, x, \bar u^n(s, x),\bar v^n(s, x)\big)|^2
  +|\bar g\big(s, x, \bar u^n(s, x),\bar v^n(s, x)\big)|^2\]ds\rho^{-1}(x)dx\les \widetilde{K},
\end{equation*}
where $\widetilde{K}$ is some constant which depends on $\|h\|_\i$, $K$, $K_1$ and $T$.
Now, from \autoref{22.4.11.5}, \rf{22.4.11.1}, \rf{22.4.11.7} and Lebesgue dominated convergence theorem, we have that
\begin{align*}
\ds &\lim_{n,m}\int_{\dbR^n}\int_t^T|\bar u^n(s,x)-\bar u^m(s,x)|^2ds\rho^{-1}(x)dx=0,\\
\ns\ds &\lim_{n,m}\int_{\dbR^n}\int_t^T|(\sigma^\top \nabla \bar u^n)(s,x)
-(\sigma^\top \nabla \bar u^m)(s,x)|^2ds\rho^{-1}(x)dx=0.
\end{align*}
Then, using \autoref{22.4.11.5} again, and note that the space $\sH$ is complete, it follows that there exists $\bar u\in\sH$ such that
\begin{align}
\ds &\lim_{n}\int_{\dbR^n}\int_t^T|\bar u^n(s,x)-\bar u(s,x)|^2ds\rho^{-1}(x)dx=0, \label{22.4.12.3}\\
\ns\ds &\lim_{n}\int_{\dbR^n}\int_t^T|(\sigma^\top \nabla \bar u^n)(s,x)
-(\sigma^\top \nabla \bar u)(s,x)|^2ds\rho^{-1}(x)dx=0, \label{22.4.12.4}\\
\ns\ds &\int_{\dbR^n}\int_t^T\(|\bar u(s,x)|^2+(\sigma^\top \nabla \bar u)(s,x)|^2\)ds\rho^{-1}(x)dx<\i.\nn
\end{align}

\ms

\emph{Step 4. We show that for any $t\in[0,T]$,
\begin{equation}\label{22.4.12.7}
\bar u(s,X^{t,x}_s)=\bar Y^{t,x}_s\q \hb{and}\q~
(\sigma^\top \nabla \bar u)(s,X^{t,x}_s)=\bar Z^{t,x}_s,
 \q~ a.s., a.e.\ s\in[t,T],\ x\in\dbR^n.
\end{equation}}
By triangular inequality, it is easy to see that
\begin{align}
\ds &\dbE\int_{\dbR^n}\int_t^T|\bar u(s,X^{t,x}_s)-\bar Y^{t,x}_s|^2ds\rho^{-1}(x)dx\nn\\
\ns\ds &\les  \dbE\int_{\dbR^n}\int_t^T|\bar u(s,X^{t,x}_s)-\bar u^n(s,X^{t,x}_s)|^2ds\rho^{-1}(x)dx
\label{22.4.12.2}\\
\ns\ds&\q + \dbE\int_{\dbR^n}\int_t^T|\bar u^n(s,X^{t,x}_s)-\bar Y^{t,x}_s|^2ds\rho^{-1}(x)dx.  \label{22.4.12.8}
\end{align}
From \autoref{22.4.11.5} and \rf{22.4.12.3}, one has that the term \rf{22.4.12.2} tends to $0$ as $n$ tends to $\i$. In addition, \autoref{22.4.11.5}, \rf{22.4.12.1} and \rf{22.4.11.7} deduce that the term \rf{22.4.12.8} tends to $0$ as $n$ tends to $\i$.
So the first result in \rf{22.4.12.7} holds. We next prove the second result in \rf{22.4.12.7}, i.e.,
\begin{equation}\label{22.4.12.5}
(\sigma^\top \nabla \bar u)(s,X^{t,x}_s)=\bar Z^{t,x}_s,  \q~ a.s., a.e.\ s\in[t,T],\ x\in\dbR^n.
\end{equation}
It is sufficient to show that the following term
\begin{equation}\label{22.4.12.6}
\begin{aligned}
\ds &\dbE\int_{\dbR^n}\int_t^T| (\sigma^\top \nabla \bar u)(s,X^{t,x}_s)-\bar Z^{t,x}_s|^2ds\rho^{-1}(x)dx  \\
\ns\ds &\les  \dbE\int_{\dbR^n}\int_t^T| (\sigma^\top \nabla \bar u)(s,X^{t,x}_s)
- (\sigma^\top \nabla \bar u^n)(s,X^{t,x}_s)|^2ds\rho^{-1}(x)dx\\
\ns\ds&\q~ + \dbE\int_{\dbR^n}\int_t^T| (\sigma^\top \nabla \bar u^n)(s,X^{t,x}_s)-\bar Z^{t,x}_s|^2ds\rho^{-1}(x)dx\\
\ns\ds & \longrightarrow 0 \q \hb{as}\q n\rightarrow \i.
\end{aligned}
\end{equation}
In fact, similarly, combining \autoref{22.4.11.5}, \rf{22.4.12.1}, \rf{22.4.11.7}, and \rf{22.4.12.4}, it is easy to see that \rf{22.4.12.6} holds, which implies that \rf{22.4.12.5} holds too.

\ms

\emph{Step 5. We show that $\bar u$ is a Sobolev solution to the following SPDE:}
\begin{equation}\label{SPDE.3}
\begin{aligned}
\ds \bar u(s, x)=& \bar h(x)+\int_{s}^{T}\left\{\mathcal{L} \bar u(r, x)
+\bar f\big(r, x, \bar u(r, x),(\sigma^\top \nabla \bar u)(r, x)\big)\right\} dr \\
\ns\ds &+\int_{s}^{T} \bar g\big(r, x, \bar u(r, x),(\sigma^\top \nabla \bar u)(r, x)\big)d\oaB_r, \quad s\in[t,T].
\end{aligned}
\end{equation}
First, it was shown in {\it Step 3} that $\bar u$ belongs to the space $\sH$, so by \autoref{Sobolev}, we only need to verify that $\bar u$ satisfies Eq. \rf{Sobolev2} with parameters $(\bar f,\bar g,\bar h)$.
Note that for every $n\in\dbN$, since $\bar u^n$ is the Sobolev solution of SPDE \rf{SPDE.2}, so for any
 $\varphi \in \mathcal{C}_{c}^{1, \infty}([0, T] \times\mathbb{R}^{n})$, $\bar u^n$ satisfies
\begin{equation}\label{22.4.12.9}
\begin{aligned}
\ds &\int_{\mathbb{R}^{n}} \int_{t}^{T} \bar u^n(s, x) \partial_{s} \varphi(s, x) ds dx+\int_{\mathbb{R}^{n}} \bar u^n(t, x) \varphi(t, x) d x-\int_{\mathbb{R}^{n}} \bar h(x) \varphi(T, x) d x \\
\ns\ds &\quad-\frac{1}{2} \int_{\mathbb{R}^{n}} \int_{t}^{T}(\sigma^\top \nabla \bar u^n)(s, x) \cdot(\sigma^\top \nabla \varphi)(s, x) d s d x
-\int_{\mathbb{R}^{n}} \int_{t}^{T} \bar u^n \mathrm{div}[(b-\widetilde{A}) \varphi](s, x) d s d x \\
\ns\ds &=\int_{\mathbb{R}^{n}} \int_{t}^{T} \bar f^n\big(s, x, \bar u^n(s, x),(\sigma^\top \nabla \bar u^n)(s, x)\big) \varphi(s, x) d s dx \\
\ns\ds &\quad+\int_{\mathbb{R}^{n}} \int_{t}^{T} \bar g\big(s, x, \bar u^n(s, x),(\sigma^\top \nabla \bar u^n)(s, x)\big) \varphi(s, x)  d B_{s} dx.
\end{aligned}
\end{equation}
Hence, if we can prove that along a subsequence \rf{22.4.12.9} converges to \rf{Sobolev2} with parameters $(\bar f,\bar g,\bar h)$, then $\bar u$ satisfies  \rf{Sobolev2} with parameters $(\bar f,\bar g,\bar h)$.

\ms

Due to that $\varphi \in \mathcal{C}_{c}^{1, \infty}([0, T] \times\mathbb{R}^{n})$,
$b\in C_{l, b}^{2}(\dbR^n;\dbR^n)$ and $\sigma\in C_{l, b}^{3}(\dbR^n;\dbR^{n\times n})$, then as $n\rightarrow \i$, clearly the left hand side of \rf{22.4.12.9} tends to the following term:
\begin{equation*}
\begin{aligned}
\ds &\int_{\mathbb{R}^{n}} \int_{t}^{T} \bar u(s, x) \partial_{s} \varphi(s, x) ds dx+\int_{\mathbb{R}^{n}} \bar u(t, x) \varphi(t, x) d x-\int_{\mathbb{R}^{n}} \bar h(x) \varphi(T, x) d x \\
\ns\ds &\quad-\frac{1}{2} \int_{\mathbb{R}^{n}} \int_{t}^{T}(\sigma^\top \nabla \bar u)(s, x) \cdot(\sigma^\top \nabla \varphi)(s, x) d s d x
-\int_{\mathbb{R}^{n}} \int_{t}^{T} \bar u \mathrm{div}[(b-\widetilde{A}) \varphi](s, x) d s d x.
\end{aligned}
\end{equation*}
Next, we compute the limit on the right hand side of \rf{22.4.12.9}.
We show that along a sequence
\begin{equation}\label{22.4.13.3}
\begin{aligned}
\ds & \lim_{n\rightarrow\i}\int_{\mathbb{R}^{n}} \int_{t}^{T} \bar f^n\big(s, x, \bar u^n(s, x),(\sigma^\top \nabla \bar u^n)(s, x)\big) \varphi(s, x) d s dx\\
\ns\ds &=\int_{\mathbb{R}^{n}} \int_{t}^{T} \bar f\big(s, x, \bar u(s, x),(\sigma^\top \nabla \bar u)(s, x)\big) \varphi(s, x) d s dx.
\end{aligned}
\end{equation}
In fact, note that the sequence $\{\bar{f}^n;n\in\dbN\}$ is globally Lipschitz continuous and decreasing converges pointwise to the function $\bar f$, and for every $t\in[0,T]$,
\begin{align*}
& \big(\bar u^{n}(s, X_{s}^{t, x}),(\sigma^\top \nabla \bar u^{n})(s, X_{s}^{t, x})\big)
=\big(\bar Y_{s}^{t, x, n},\bar Z_{s}^{t, x, n}\big), \q~ a.s., a.e.\ s\in[t,T],\ x\in\dbR^n,\\
& \big(\bar u(s, X_{s}^{t, x}),(\sigma^\top \nabla \bar u)(s, X_{s}^{t, x})\big)
=\big(\bar Y_{s}^{t, x},\bar Z_{s}^{t, x}\big),  \q~ a.s., a.e.\ s\in[t,T],\ x\in\dbR^n,
\end{align*}
with
\begin{equation*}
\left\{\begin{aligned}
\ds &\lim_{n\rightarrow\i}\bar Y^{t, x, n}_s=\bar Y^{t, x}_s,\q~ \hb{a.s.}\ s\in[t,T],\\
 \ns\ds  & \lim_{n\rightarrow\i} \dbE\int_t^T |\bar Z^{t, x, n}_s-\bar Z^{t, x}_s|^2ds=0.
\end{aligned}\right.
\end{equation*}
Then, similar to the argument as in the proof of \autoref{Mono-stable}, we get that
\begin{align*}
\ds \lim_{n\rightarrow\i}\dbE\int_t^T
\Big|& \bar f^n\big(s,  X_{s}^{t, x}, \bar u^n(s,  X_{s}^{t, x}),(\sigma^\top \nabla \bar u^n)(s, X_{s}^{t, x})\big)\\
\ns\ds & -\bar f\big(s,X_{s}^{t, x},\bar u(s,X_{s}^{t, x}),(\sigma^\top\nabla\bar u)(s,X_{s}^{t, x})\big)\Big|ds=0.
\end{align*}
So combining \autoref{22.4.11.5} and the Lebesgue dominated convergence theorem we have that
\begin{align*}
\ds \lim_{n\rightarrow\i}\dbE\int_{\dbR^n}\int_t^T
\Big|& \bar f^n\big(s, x, \bar u^n(s,  x),(\sigma^\top \nabla \bar u^n)(s,x)\big)\\
\ns\ds & -\bar f\big(s,x,\bar u(s,x),(\sigma^\top\nabla\bar u)(s,x)\big)\Big|ds\rho^{-1}(x)dx=0,
\end{align*}
which implies that \rf{22.4.13.3} holds. Finally, for the second term on the right hand side of \rf{22.4.12.9}, similar to \rf{22.4.13.4} and the previous discussion, one has that
\begin{align*}
\ds \lim_{n\rightarrow\i}
\bigg|& \int_{\mathbb{R}^{n}} \int_{t}^{T} \bar g\big(s, x, \bar u^n(s, x),(\sigma^\top \nabla \bar u^n)(s, x)\big) \varphi(s, x)  d B_{s} dx\\
\ns\ds & -\int_{\mathbb{R}^{n}} \int_{t}^{T} \bar g\big(s, x, \bar u(s, x),(\sigma^\top \nabla \bar u)(s, x)\big) \varphi(s, x)  d B_{s} dx\bigg|=0\q\hb{in probability.}
\end{align*}
Therefore $\bar u$ satisfies Eq. \rf{Sobolev2} with parameters $(\bar f,\bar g,\bar h)$, and thus $\bar u$ is a Sobolev solution to SPDE \rf{SPDE.3}.

\ms

\emph{Step 6. The logarithmic transformation.}

\ms

On the one hand, we see that SPDE \rf{SPDE.3} with parameters $(\bar f,\bar g,\bar h)$ has a Sobolev solution $\bar u$, and for every $t\in[0,T]$,
\begin{equation*}
\bar u(s,X^{t,x}_s)=\bar Y^{t,x}_s\q \hb{and}\q
(\sigma^\top \nabla \bar u)(s,X^{t,x}_s)=\bar Z^{t,x}_s, \q~ a.s., a.e.\ s\in[t,T],\ x\in\dbR^n,
\end{equation*}
where the pair $(\bar Y^{t,x},\bar Z^{t,x})$ is the solution of BDSDE \rf{3.9.2.2} with parameters $(\bar f, \bar g,\bar h)$. On the other hand,
$$\bar Y^{t,x}_s=\exp(\beta Y^{t,x}_s)\q~\hb{and}\q~\bar Z^{t,x}_s=\b \bar Y^{t,x}_sZ^{t,x}_s,\q~ t\les s\les T,$$
where the pair $( Y^{t,x}, Z^{t,x})\in L_{\dbF}^\i(0,T;\dbR)\times L_{\dbF}^2(0,T;\dbR^d)$ is the solution of BDSDE \rf{FBDSDE} with parameters $( f, g, h)$.
Hence, similar to the computation from \rf{22.4.14.3} to \rf{22.4.15.1}, if we define
\begin{align*}
\ds &u(s,X^{t,x}_s)\deq\frac{\ln \bar u(s,X^{t,x}_s)}{\b}=\frac{\ln \bar Y^{t,x}_s}{\b}=Y^{t,x}_s,\q~ t\les s\les T,
\end{align*}
with
\begin{align*}
(\sigma^\top \nabla u)(s,X^{t,x}_s)=\frac{(\sigma^\top \nabla \bar u)(s,X^{t,x}_s)}{\b \bar u(s,X^{t,x}_s)}
=\frac{\bar Z^{t,x}_s}{\b \bar Y^{t,x}_s}=Z^{t,x}_s,\q~ t\les s\les T,
\end{align*}
then $u$ is a Sobolev solution of SPDE \rf{SPDE} with parameters $( f, g, h)$, and for every $t\in[0,T]$,
\begin{equation*}
u(s, X_{s}^{t, x})= Y^{t,x}_s \q~ \hb{and}\q~
(\sigma^\top \nabla u)(s, X_{s}^{t, x})=Z^{t,x}_s, \q~ a.s., a.e.\ s\in[t,T],\ x\in\dbR^n,
\end{equation*}
where $(Y^{t, x}, Z^{t, x})$ is the unique solution of BDSDE \rf{FBDSDE} with parameters $( f, g, h)$. This completes the proof.
\end{proof}

\section{Conclusion Remarks}\label{Sec6}

We initiate the study of quadratic BDSDEs, where the existence, comparison, and stability results for one-dimensional BDSDEs with quadratic growth and bounded terminal value are obtained.
We remark that when proving a priori estimate, in order to ensure $Y$ is bounded, we introduce the condition \rf{22.1.16.1} concerning the coefficient $g$, which we think is necessary for the case of bounded terminal value. However, if someone discusses the solution of quadratic BDSDEs with unbounded terminal value, the condition \rf{22.1.16.1}  could be relaxed because  the boundedness of $Y$ is needn't at that time. We hope to publish some follow-up works when the terminal value $\xi$ is unbounded or/and $Y$ is multi-dimensional.
Finally, in this framework, we used BDSDEs to prove the existence and uniqueness of the Sobolev solutions of semilinear SPDEs, which extend the nonlinear Feynman-Kac formula of Pardoux--Peng \cite{Pardoux-Peng-94}. On the other hand, the existence and uniqueness of viscosity solutions of related SPDEs are interesting and merit further studies in the near future.

\end{document}